\newcommand{\tpitchfork}{%
  \vbox{
    \baselineskip\z@skip
    \lineskip-.52ex
    \lineskiplimit\maxdimen
    \m@th
    \ialign{##\crcr\hidewidth\smash{$-$}\hidewidth\crcr$\pitchfork$\crcr}
  }%
}
\newtheorem{mainth}{Theorem}[]
\theoremstyle{plain}
\newtheorem{thm}{Theorem}[section]
\newtheorem*{thm*}{Theorem} 
\newtheorem{cor}[thm]{Corollary}
\newtheorem{lem}[thm]{Lemma}
\newtheorem{prop}[thm]{Proposition}
\theoremstyle{definition}
\newtheorem{question}{Question}
\newtheorem{defin}[thm]{Definition}
\newtheorem*{rems*}{Remarks}
\theoremstyle{remark}
\newtheorem{rem}[thm]{Remark}
\DeclareMathOperator{\Sym}{Sym}
\DeclareMathOperator{\Sing}{Sing_0}
\DeclareMathOperator{\Reg}{Reg}
\DeclareMathOperator{\Str}{Str}
\newcommand{\Gen}{\mathcal{G}_{g_0}}
 \DeclareMathOperator{\Scew}{Scew}
\DeclareMathOperator{\SO}{SO}
\DeclareMathOperator{\Rc}{Rm}
\newcommand{\so}{\mathfrak{so}}
\DeclareMathOperator{\codim}{codim}
\newcommand{\RR}{\mathbb{R}}
\newcommand{\RP}{\mathbb{RP}}
\newcommand{\NN}{\mathbb{N}}
\newcommand{\m}{\mathfrak{m}}
\newcommand{\bF}{\bar F}   
\newcommand{\bS}{\bar S}
\newcommand{\gijk}{g_0^{ijk}}
\newcommand{\gkij}{g_0^{kij}}
 \newcommand{\gijj}{g_0^{ijj}}
   \newcommand{\gikk}{g_0^{ikk}}
\newcommand{\cl}{\mathrm{cl}}
\definecolor{Red}{rgb}{1.,0.,0.}
\DeclareMathOperator{\Met}{{\mathcal M}}
\providecommand{\Metmu}{{{\mathcal N}_\mu}}
\providecommand{\Yod}{{{\mathcal Y}_{g_0}}}
\DeclareMathOperator{\tr}{tr}
\DeclareMathOperator{\End}{End}
\providecommand{\integral}[4]{\int_{#1}^{#2} #3 \, #4}
\begin{document}
%\dedicatory{}
%\subjclass{}
%\keywords{diameter rigidity, positive curvature,
%indices of closed geodesics, $P_l$-manifolds}
%\thanks{}
%\thanks{The first author was supported by the Deutsche
%Forschungsgemeinschaft}

%
\begin{titlepage}\title
{Scalar curvature along Ebin geodesics}
\author{Christoph B\"ohm}\author{Timothy Buttsworth}\author{Brian Clarke}
%\address{University of Pennsylvania\\ DRL\\
%209 South 33rd  Street \\
%     Philadelphia, PA 19104-6395\\  USA}
%\email{wilking@math.upenn.edu}
\begin{abstract}
Let $M$ be a smooth, compact manifold and let
$\Metmu$ denote the set of Riemannian metrics on $M$
with smooth volume density $\mu$. %{\alert For} 
%a given 
%$g_0\in \Metmu$
%and $h\in T_{g_0} \Metmu$, we use methods %from differential topology to study the %eigenvalue multiplicities of $h$ with %respect to $g_0$. As a result, 
For a given $g_0\in \Metmu$, we show that if $\dim(M)\ge 5$, then there exists 
an open and dense subset
$\Yod \subset T_{g_0} \Metmu$ (in the $C^{\infty}$ topology)
so that for each $h\in \Yod$, the $(\Metmu,L^2)$ Ebin geodesic $\gamma_h(t)$ with $\gamma_h(0)=g_0$ and $\gamma_h'(0)=h$ satisfies $\lim_{t \to \infty}$
 $R(\gamma_h(t))=-\infty$, uniformly. 
\end{abstract}
\end{titlepage}
\maketitle
\tableofcontents
\setcounter{page}{1}
\setcounter{tocdepth}{0}
%\tableofcontents

%%%%%%%%%%%%%%%%%%%%%%%%%%%%%%%%%%%%%%%%%%%%%%%%%%%%%%%%%
%%%%%%%%%%%%%%%%%%%%%%%%%%%%%%%%%%%%%%%%%%%%%%%%%%%%%%%%%

\section{Introduction}
\label{sec:introduction}

Every compact manifold $M$
of dimension $n\geq 3$ admits Riemannian metrics $g$ of negative scalar curvature $R(g)<0$ \cite{Aub}
and even metrics of negative Ricci curvature \cite{Lo1,Lo2}.  
The main result of this paper, Theorem \ref{thm0}, 
shows in an algebraically controlled manner that there are many Riemannian metrics \textit{with the same volume form} that have \textit{arbitrarily large negative scalar curvature}.

Let  $\Metmu$ denote the space
of smooth Riemannian metrics on a compact Riemannian manifold $M$
with given $C^\infty$-smooth volume density $\mu:M \to (0,\infty)$ 
of volume one. Recall that
by \cite{Msr} all Riemannian structures on $M$  of volume one
can be realized by Riemannian metrics in  $\Metmu$. 
The space $\Metmu$
is an infinite-dimensional Fréchet manifold \cite{Eb}, which
can be endowed with the Ebin metric, also called the
$L^2$-metric. 
The Ebin geodesics $\gamma_h(t)$ of $(\Metmu,L^2)$
emanating from an arbitrary point 
$g_0\in \Metmu$ with $\gamma_h'(0)=h \in T_{g_0}\Metmu$  are given by
$\gamma_h(t)(\,\cdot \,,\cdot )=g_0(\exp(tH)\,\cdot ,\cdot )$,  where $h(\cdot,\cdot )=g_0(H \,\cdot ,\cdot)$ (see Section \ref{preliminaries}).
Our main result is as follows:
\begin{mainth}\label{thm0}
 Let $(M,g_0)$ be a compact Riemannian manifold with volume density $\mu$.
 Then, if $\dim(M)\geq 5$, there exists an open and dense
 subset $\Yod \subset T_{g_0}\Metmu$ in the $C^{\infty}$ topology
 such that for every $h \in \Yod$ we have 
\begin{equation}
    \lim_{t\to \infty}R(\gamma_h(t))=-\infty 
\end{equation}
uniformly on $M$.
\end{mainth}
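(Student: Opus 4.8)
The plan is to compute the precise leading asymptotics of $R(\gamma_h(t))$ as $t\to\infty$ directly from the formula $\gamma_h(t)=g_0(\exp(tH)\,\cdot\,,\cdot)$, to observe that the dominant term is a negative multiple of a sum of squares, and then to choose $h$ so that this term never degenerates; the hypothesis $\dim M\ge 5$ enters only through a dimension count at the last step. To begin, since $\gamma_h(t)\in\Metmu$ the $g_0$-symmetric endomorphism $H$ is trace-free; fixing $p\in M$ and writing the eigenvalues of $H(p)$ as $\mu_1\ge\cdots\ge\mu_n$ (so $\sum_i\mu_i=0$), I would choose near $p$ a local $g_0$-orthonormal frame $\{e_a\}$ adapted to the eigenspaces of $H$, so that $\hat e_a:=\exp\!\big(-\tfrac t2\mu_{i(a)}\big)\,e_a$ is $\gamma_h(t)$-orthonormal. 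Then $R(\gamma_h(t))$ is a universal quadratic polynomial in the structure functions $\hat c_{abc}:=\gamma_h(t)([\hat e_a,\hat e_b],\hat e_c)$, linear in their $\hat e$-derivatives, and a short computation gives
\[
\hat c_{abc}=\exp\!\big(-\tfrac t2(\mu_{i(a)}+\mu_{i(b)}-\mu_{i(c)})\big)\,\big(c_{abc}+O(t)\big),
\]
where $c_{abc}=g_0([e_a,e_b],e_c)$ are the fixed structure functions of $\{e_a\}$, the $O(t)$ correction (which involves only derivatives of the eigenvalue functions) being present only when $c\in\{a,b\}$; each $\hat e$-derivative introduces a further factor $O(t)$.

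Next I would order the contributions to $R(\gamma_h(t))(p)$ by exponential growth rate in $t$. The purely quadratic part $-\tfrac14\sum_{a,b,c}\hat c_{abc}^2$ carries the fastest-growing monomial, indexed by the triple minimizing $\mu_{i(a)}+\mu_{i(b)}-\mu_{i(c)}$ with $a\neq b$ — that is, with $e_a,e_b$ in the lowest eigenspace(s) of $H$ and $e_c$ in the highest — with rate $\psi(p):=\mu_{i(c)}-\mu_{i(a)}-\mu_{i(b)}>0$ whenever $H(p)\neq0$. The crucial analytic fact is that nothing else grows this fast: the cross terms $\tfrac12\sum\hat c_{abc}\hat c_{cab}$, the mean-curvature term $-|\mathbf H|^2$, and all derivative terms grow at most like $e^{-t\mu_n}\cdot O(t)$, and $\psi(p)>-\mu_n$. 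Hence, away from the common zero locus of the relevant $c_{abc}$, one has $R(\gamma_h(t))(p)=-\tfrac14\big(\sum c_{abc}^2\big)(p)\,e^{t\psi(p)}(1+o(1))<0$; if those structure functions vanish at $p$ one passes to the next growth rate, where again the dominant term is a negative multiple of a sum of squares of structure functions (the cross terms never catch up to compensate — this is precisely why nilpotent Heisenberg-type geometries are negatively curved, and why the Berger-sphere cancellation does not occur here). The outcome is the dichotomy: $R(\gamma_h(t))(p)\to-\infty$, \emph{unless} every structure function $c_{abc}(p)$ with $\mu_{i(a)}+\mu_{i(b)}-\mu_{i(c)}<\mu_n(p)$ vanishes. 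Making this dichotomy and the underlying estimates uniform in $p$ — in particular near the thin sets where $H$ has eigenvalue coincidences or is close to $0$ — is the part I expect to be the main obstacle.

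It then remains to remove the exceptional locus by a genericity argument, and here $\dim M\ge5$ is forced. Let $N(n)$ be the number of triples $(a,b,c)$ with $a<b$ satisfying $\mu_a+\mu_b-\mu_c<\mu_n$ for the balanced spectrum $\mu_i=n+1-2i$; an elementary count gives $N(3)=1$, $N(4)=3$, $N(5)=7$, and $N(n)>n$ exactly when $n\ge5$ (indeed $N(n)$ grows cubically in $n$). For fixed $H(p)$ each $c_{abc}$ is a nonzero linear form in $\nabla H(p)$, and distinct triples yield linearly independent forms, so the exceptional locus of the previous step is cut out, in the $1$-jet bundle of $\Sym^2_0 T^*M$, by $N(n)$ independent equations and hence has codimension $N(n)>n=\dim M$. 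By Thom jet transversality, for $h$ in a residual — therefore dense — subset of $T_{g_0}\Metmu$, the corresponding $H$ is nonvanishing and the finitely many structure functions above have no common zero on $M$. (One should also check that this "no common zero" condition is formulated invariantly across the locus where eigenvalues of $H$ repeat, replacing the individual $c_{abc}$ by the norm of the appropriate non-integrability tensor of the lowest eigenspace valued in the highest.)

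Finally I would assemble the pieces. For $h$ as produced in the previous step, compactness of $M$ upgrades the pointwise estimate of the second step to a uniform bound $R(\gamma_h(t))\le -c_0\,e^{t\psi_0}$ for all large $t$, with constants $c_0,\psi_0>0$ depending only on $h$ and $g_0$, so that $\lim_{t\to\infty}R(\gamma_h(t))=-\infty$ uniformly on $M$. Since the conditions defining the good set — $H$ nonvanishing and the relevant non-integrability data nowhere zero — are open (nonvanishing of continuous functions on a compact manifold), the collection $\Yod$ of such $h$ is open as well as dense in the $C^\infty$ topology, which is exactly the assertion of the theorem. The principal difficulty throughout is the second step: carrying out the term-by-term asymptotic expansion and, above all, establishing the non-cancellation of the leading negative term uniformly over $M$, while the combinatorial/transversality input of the third step is what pins the dimension hypothesis to $n\ge5$.
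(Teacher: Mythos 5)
Your proposal reproduces, in outline, the paper's strategy: pass to an eigenframe adapted to $H$, read off the asymptotics of $R(\gamma_h(t))$ from the rescaled structure constants, isolate the dominant negative square terms, and then use a jet-transversality count to avoid their common zero locus, with $n\ge5$ entering only through that count. The route is the right one, but two of the steps, as you have written them, contain genuine gaps that the paper spends most of its length filling, and I want to flag both.

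The first gap is the dimension count. You count $N(n)$, the number of fast-growing triples, for the \emph{balanced} spectrum $\mu_i=n+1-2i$, and observe $N(5)=7>5$. But the eigenframe $\{e_a\}$ is only smoothly defined on the complement of the multiplicity locus of $H$, and on that locus the eigenspaces merge, so the number of triples available drops. The condition ``the relevant structure functions have no common zero'' must therefore be checked \emph{stratum by stratum}, where a stratum is the set of $p\in M$ with a prescribed eigenvalue multiplicity $\m=(m_1,\dots,m_L)$. What the paper actually verifies (Lemmas \ref{dimensionsixgood} and \ref{tabledim5}) is that for every admissible $\m$ the number of available triples \emph{plus the codimension of the $\m$-stratum} exceeds $n$. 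In dimension $5$ this is tight: the stratum $(1,1,3)$ is $0$-dimensional and carries only three triples; the stratum $(2,2,1)$ is $3$-dimensional and carries five. Your single count $N(5)=7$ only covers the open stratum $(1,1,1,1,1)$ and says nothing about the others, so as written the density conclusion does not follow.

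The second gap is the transversality argument itself. You invoke Thom jet transversality for the $1$-jet of the section $h:M\to\Sym^2_0T^*M$ and assert that the $c_{abc}$ are linearly independent linear forms in $\nabla H(p)$. This is delicate because the $c_{abc}$ are structure functions of an eigenframe, and the eigenframe is not a smooth (indeed not even single-valued) function of $H$ across the multiplicity locus, so the ``exceptional locus'' in the jet bundle of $h$ is not obviously a submanifold, nor a Whitney stratified set. The paper avoids this by splitting the genericity into two stages: first a transversality of $H$ to the Whitney-stratified singular set $\Sing(TM)\subset\Sym_0(TM)$ (Theorems \ref{thm:whitney}, \ref{thm:generic}, using Trotman), which controls where and how eigenvalue coincidences occur and makes the eigenframe well-behaved; and then a \emph{separate} jet transversality applied to perturbations of the block-diagonalising \emph{frame} (via maps $V:B_2(0)\to\so(n)$, Proposition \ref{IFT}), keeping $H$'s eigenvalue data fixed so the first genericity is preserved. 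Perturbing $h$ directly, as you propose, has to contend with the fact that such a perturbation changes both the eigenframe and the multiplicity locus at once; making the dimension count work in that setup is exactly what the stratification is designed to control. You should also note that the analytic estimates (no cancellation from cross terms, uniformity in $p$) that you defer in your second paragraph constitute Section~2 of the paper, including the careful separation into terms of type I, II, III and the constants of Lemmas \ref{lowerboundgIt}--\ref{upperboundgIt}, so your identification of this as ``the main obstacle'' is accurate but in fact the transversality/counting half is where your argument currently breaks down.
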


Theorem \ref{thm0} says that for \emph{any} base point
$g_0\in \Metmu$, there is an open and dense set of initial directions for which
the scalar curvature tends to $-\infty$ 
along the corresponding
geodesic ray as we approach the boundary at infinity of $\Metmu$. 
%Recall that $\Metmu$ can be considered %an infinite-dimensional
%version of the finite-dimensional %symmetric space  
%${\rm SL}(n) / {\rm SO}(n)$
%of positive definite symmetric matrices %of determinant one.
Moreover, we would like to mention that
the scalar curvature tends to $-\infty$  exponentially fast: for each $h\in \mathcal{Y}_{g_0}$, there exist positive constants $C_1(h),C_2(h),T(h)$ so that, for all $t\geq T(h)$, we have 
\begin{align*}
    R(\gamma_h(t))\le -C_1(h)\cdot  e^{C_2(h)\cdot  t}.
\end{align*}
%In a neighbourhood of any $h_0\in %\mathcal{Y}_{g_0}$, 
The constants $C_1,C_2,T$ can be chosen to depend continuously on $h$ in the $C^2$ topology.

Due to the Theorem of Gauss-Bonnet, Theorem
\ref{thm0} cannot hold for dimension $n=2$. In dimension $n=3$, we show 
in Lemma \ref{lem:3dplus}
that for \emph{each} $g_0 \in \Metmu$, there cannot exist a dense set $\mathcal{Y}_{g_0} \subset T_{g_0}\Metmu$
with the above properties.
The case $n=4$ is wide open: see Remark \ref{rem:dim4}.

It is clear that the open and dense set $\Yod$ cannot be chosen to be \textit{all} of $T_{g_0}\mathcal{N}_{\mu}$ for 
arbitrary $g_0 \in \Metmu$ on arbitrary $M$.
Indeed, if $g_0$ is a Riemannian submersion metric  and if
the induced metric on all fibres has positive scalar curvature, then the canonical variation gives rise
to a choice of $h$ for which the scalar curvature tends to $+\infty$ uniformly
along $\gamma_h(t)$ \cite[9.67]{Bes}.
 In general we expect scalar curvature to tend to $+\infty$ only in directions whose eigenspace 
 distribution is related to certain singular foliations (in the sense of Sussmann \cite{Su1}, \cite{Su2}).

\begin{question}
 Let $(M,g_0)$ be a compact Riemannian manifold with volume density $\mu$.
Let $h \in  T_{g_0}\Metmu$ and suppose that 
$ \lim_{t\to \infty}R(\gamma_h(t))=+\infty$
uniformly on $M$. Are there integrability conditions that $h$ must satisfy?
\end{question}

We turn to the proof of Theorem \ref{thm0}. 
Given a base point $g_0$ as a reference metric
we consider $h\in T_{g_0}\Metmu$ 
as a $g_0$-self-adjoint, tracefree endomorphism $H$ of $TM$. The idea behind the proof is to compute the scalar curvature of 
the Riemannian metrics $\gamma_h(t) \in \Metmu$ by using a locally-defined $g_0$-orthonormal frame $\{e_i\}_{i=1}^{n}$ in which the corresponding
endomorphism $H$ has a block-diagonal form, according to the clustering of the eigenvalues of $H$. Like in the homogeneous case considered in \cite{BWZ}, where the space of invariant metrics is finite-dimensional, 
it is possible to identify negative terms which grow exponentially quickly as $t$ tends to $\infty$. These terms are locally given as a product
of an exponential term and a term of type $g_0([e_1,e_j],e_k)^2$, where the indices $j,k$ correspond to different blocks (of the block diagonalisation of $H$); it therefore becomes useful to ensure that the multiplicities of the eigenvalues of $H$ are as small as possible, so there are many different blocks. 

The open and dense condition in the statement of Theorem \ref{thm0} allows us to restrict attention to choices of $H$ for which certain eigenvalue multiplicities do not occur. 
Ideally, we would like to restrict attention to those $H$ for which all eigenvalues have multiplicity one everywhere, but there are no such $H$ unless the tangent bundle $TM$ splits as a sum of line bundles. As a consequence, 
we must in general accommodate the existence of points where the eigenvalues of $H$ have non-trivial multiplicities.
Therefore, it appears that it is more appropriate to focus on choices of $H$ for which the occurrence of non-trivial multiplicities of the eigenvalues is rare, and where non-trivial multiplicities \textit{do} occur, they do so in a non-degenerate way. We refer to these choices of $H$ as `generic'; see Definition \ref{def:globalgen}. 
%This allows us to define the set of generic initial directions $h \in T_{g_0}\Metmu$: see 
%Definition  \ref{def:globalgen}. For generic directions $h$
%the bifurcation of the eigenspaces of $H$ at the singular locus $S_h \subset M$
%where some of the eigenvalues of the corresponding endomorphism
%$H$ have multiplicity 
%greater than or equal to two is as generic as possible.

Our second main result is:

\begin{mainth}\label{thm1}
 Let $(M,g_0)$ be a compact Riemannian manifold of dimension $n \geq 2$
with volume density $\mu$.
Then, the set of generic directions in $T_{g_0}\Metmu$ is open and dense in the $C^\infty$ topology.
\end{mainth}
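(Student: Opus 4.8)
The plan is to recognise the genericity of $H$ as transversality of the section $H$ to a natural stratification of a vector bundle over $M$, and then to obtain openness and density from a stratified Thom transversality theorem together with the compactness of $M$. Concretely, let $\mathcal{S}\to M$ be the vector bundle of rank $\tfrac12 n(n+1)-1$ whose fibre over $x$ is the space of $g_0$-self-adjoint, trace-free endomorphisms of $T_xM$, so that $h\mapsto H$ identifies $T_{g_0}\Metmu$ with the space $\Gamma(\mathcal S)$ of smooth sections. On the model fibre $\Sym_0(\RR^n)$ of trace-free symmetric matrices one has the classical stratification by eigenvalue multiplicity type: for an ordered type $\tau=(m_1\ge\cdots\ge m_k)$ with $\sum_i m_i=n$, the set $\Sigma_\tau$ of matrices whose distinct eigenvalues have multiplicities $m_1,\dots,m_k$ is a smooth semialgebraic submanifold, $\{\Sigma_\tau\}$ is a finite Whitney stratification, and
\[
 \codim\Sigma_\tau=\sum_{i=1}^{k}\binom{m_i+1}{2}-k\ \ge\ 2\qquad\text{for all }\tau\ne(1,\dots,1),
\]
the stratum $\Sigma_{(1,\dots,1)}$ being open and dense. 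Each $\Sigma_\tau$ is invariant under $\Or(n)$-conjugation, so it globalises to a fibre-sub-bundle $\mathcal S_\tau\subset\mathcal S$, and $\{\mathcal S_\tau\}$ is a finite Whitney stratification of the total space $\mathcal S$ with the same codimensions.

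\emph{Genericity $=$ transversality.} I would first check straight from Definition \ref{def:globalgen} that $H$ is generic precisely when the section $H\colon M\to\mathcal S$ is transverse to every stratum $\mathcal S_\tau$. Because the multiplicity type of $H(x)$ depends only on the value of $H$ at $x$, transversality of $H$ to $\mathcal S_\tau$ at a point $x$ with $H(x)\in(\mathcal S_\tau)_x$ is equivalent to surjectivity of the composite
\[
 T_xM\ \xrightarrow{\ \nabla H_x\ }\ \Sym_0(T_xM)\ \longrightarrow\ \Sym_0(T_xM)\big/\,T_{H(x)}\Sigma_\tau ,
\]
for any metric connection on $\mathcal S$. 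By the preimage theorem this forces $H^{-1}(\mathcal S_\tau)$ to be a submanifold of $M$ of codimension $\codim\Sigma_\tau$, hence empty whenever $\codim\Sigma_\tau>n$, and along it the normal derivative of $H$ identifies the normal bundle with that of $\Sigma_\tau$ — which is exactly the non-degenerate occurrence of multiplicities demanded by the definition. In particular a generic $H$ has pairwise distinct eigenvalues off a submanifold of codimension $2$, with higher multiplicities confined to submanifolds of the larger codimensions above.

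\emph{Density.} For density I would use the parametric transversality theorem. Given $H_0\in\Gamma(\mathcal S)$, form the family $H_s:=H_0+s$ for $s\in\Gamma(\mathcal S)$; since at each $x$ the evaluation $s\mapsto s(x)$ is onto $\Sym_0(T_xM)$, the map $(x,s)\mapsto H_s(x)\in\mathcal S$ is a submersion and hence transverse to every $\mathcal S_\tau$. Parametric transversality then gives $H_s\pitchfork\mathcal S_\tau$ for $s$ in a dense set, and since there are only finitely many types $\tau$ the intersection stays dense; the passage to the Fréchet topology on $\Gamma(\mathcal S)$ is handled by approximating with finite-dimensional families of compactly-supported perturbations over a finite cover. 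Alternatively one argues by downward induction on $\codim\Sigma_\tau$, achieving transversality to the deepest strata first and then to shallower ones by perturbations small enough to preserve what has already been gained, the frontier condition of the stratification making this legitimate.

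\emph{Openness and the main difficulty.} The delicate point is openness: $\mathcal S\setminus\mathcal S_{(1,\dots,1)}$ is closed, but its strata have non-empty frontier, so transversality to a single $\mathcal S_\tau$ is not by itself open. Here I would invoke the standard fact that transversality of a map \emph{out of a compact manifold} to a Whitney stratified set with finitely many strata is open in the $C^1$, hence $C^\infty$, topology: if $H_k\to H$ with each $H_k$ failing transversality to some $\mathcal S_{\tau_k}$, pass to a subsequence with $\tau_k\equiv\tau$ and with failure witnessed at points $x_k\to x$; if $H(x)\in\mathcal S_\tau$ one contradicts $H\pitchfork\mathcal S_\tau$ directly, and if $H(x)$ lies in a deeper stratum $\mathcal S_{\tau'}$, Whitney's condition (a) forces the limit of the tangent planes $T_{H_k(x_k)}\mathcal S_\tau$ to contain $T_{H(x)}\mathcal S_{\tau'}$, contradicting $H\pitchfork\mathcal S_{\tau'}$; strata with $\codim\Sigma_\tau>n$ stay unoccupied under small perturbations by compactness. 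I expect the real work to be (i) recording the Whitney regularity and the codimension count for the multiplicity stratification in the bundle form needed here — classical, but it must be assembled carefully — and (ii) making the parametric transversality argument rigorous over the Fréchet manifold $\Gamma(\mathcal S)$ rather than a Banach one.
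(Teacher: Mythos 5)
Your proposal is correct and follows essentially the same route as the paper: genericity is defined as transversality of the section $H$ to the Whitney-stratified singular set in $\Sym_0(TM)$ given by the eigenvalue-multiplicity faces, and openness and density then follow from the stratified transversality theorem for maps out of the compact base $M$. The only real difference is one of delegation: the paper verifies the Whitney conditions for the faces (Lemma \ref{lem:submanifold}, Propositions \ref{prop:transfaces} and \ref{prop:conditionb}) and then simply cites Trotman's theorem for the open-and-dense statement, whereas you sketch the standard openness argument (compactness plus condition (a)) and the parametric-transversality density argument directly while flagging the Whitney regularity as the part still to be assembled.
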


 For a given $h$, we use $S_h\subseteq M$ to refer to the multiplicity locus of points where there are not $n=\dim(M)$ distinct eigenvalues of $H$. We also describe the regularity 
 of this set:
 
\begin{mainth}\label{thm2} 
 Let $(M,g_0)$ be a compact Riemannian manifold of dimension \mbox{$n \geq 2$}
with volume density $\mu$.
Then, the multiplicity locus $S_h$ of a generic direction $h \in T_{g_0}\Met_{\mu}$ is either empty or a 
compact Whitney stratified subspace of $M$ of codimension two. 
\end{mainth}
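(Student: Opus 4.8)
The plan is to identify $S_h$ with the transverse pre-image, under the endomorphism field $H$, of the eigenvalue-multiplicity stratification of the bundle of tracefree self-adjoint endomorphisms, and then to invoke the standard principle that transverse pre-images of Whitney stratifications are again Whitney stratified, with the same codimensions.

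Concretely, I would let $E \to M$ be the bundle whose fibre over $p$ is $\Sym_0(T_pM)$, the space of $g_0$-self-adjoint tracefree endomorphisms of $T_pM$; its structure group is $\Or(n)$ acting by conjugation, $H$ is a smooth section, and $S_h = \{p : H(p) \text{ has fewer than } n \text{ distinct eigenvalues}\}$. On the model fibre $\Sym_0(n)$, for each partition $\sigma = (m_1 \geq \cdots \geq m_k)$ of $n$ let $\mathcal{S}_\sigma$ be the locus of matrices whose unordered eigenvalue-multiplicity list is $\sigma$. This is exactly the decomposition of $\Sym_0(n)$ by $\Or(n)$-orbit type for the conjugation action, so it is a finite Whitney-regular stratification obeying the frontier condition (a classical fact about orbit-type stratifications of smooth compact-group actions). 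A dimension count — $\mathcal{S}_\sigma$ fibres, up to a finite covering, over the partial flag manifold $\Or(n)/(\Or(m_1)\times\cdots\times\Or(m_k))$ with fibre an open subset of a hyperplane in $\RR^k$ — gives $\codim_{\Sym_0(n)} \mathcal{S}_\sigma = \sum_i \tfrac12(m_i-1)(m_i+2)$. This vanishes only for $\sigma = (1,\ldots,1)$, equals $2$ exactly for $\sigma = (2,1,\ldots,1)$, and is at least $4$ for every other partition ($4$ for $(2,2,1,\ldots,1)$, $5$ for $(3,1,\ldots,1)$, and so on); in particular there is no stratum of codimension $1$ or $3$.

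Since the $\mathcal{S}_\sigma$ are $\Or(n)$-invariant, they assemble into a fibrewise Whitney stratification $\{\hat{\mathcal{S}}_\sigma\}$ of $E$ with $\codim_E \hat{\mathcal{S}}_\sigma = \codim_{\Sym_0(n)} \mathcal{S}_\sigma$, and $S_h = H^{-1}\bigl(E \setminus \hat{\mathcal{S}}_{(1,\ldots,1)}\bigr) = \bigcup_{\sigma \neq (1,\ldots,1)} H^{-1}(\hat{\mathcal{S}}_\sigma)$. Here $E \setminus \hat{\mathcal{S}}_{(1,\ldots,1)}$ is the zero locus of the discriminant of the fibrewise characteristic polynomial, hence closed, so $S_h$ is closed and, $M$ being compact, compact. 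By Definition \ref{def:globalgen} — the genericity condition whose openness and density is Theorem \ref{thm1} — a generic $h$ is one for which $H$, as a section of $E$, is transverse to \emph{every} stratum $\hat{\mathcal{S}}_\sigma$; this is precisely how the ``non-degenerate'' occurrence of nontrivial multiplicities is encoded. Then each $\Sigma_\sigma := H^{-1}(\hat{\mathcal{S}}_\sigma)$ is a smooth submanifold of $M$ of codimension $\codim_E\hat{\mathcal{S}}_\sigma$, and $S_h = \bigsqcup_{\sigma \neq (1,\ldots,1)} \Sigma_\sigma$, with top piece $\Sigma_{(2,1,\ldots,1)}$ of codimension $2$ and all others of codimension $\geq 4$. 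Transversality of $H$ to the Whitney stratification $\{\hat{\mathcal{S}}_\sigma\}$ carries the frontier condition and Whitney's conditions (a) and (b) over to $\{\Sigma_\sigma\}$ — this is the ``transverse pre-image of a Whitney stratification is Whitney'' lemma, applied in local trivialisations of $E$. Finally, because every matrix with a repeated eigenvalue is a limit of matrices with exactly one repeated eigenvalue, $\hat{\mathcal{S}}_{(2,1,\ldots,1)}$ is dense in $E \setminus \hat{\mathcal{S}}_{(1,\ldots,1)}$, and the frontier condition for $\{\Sigma_\sigma\}$ then forces $\overline{\Sigma_{(2,1,\ldots,1)}} = S_h$. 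Hence $S_h$ is empty or a compact Whitney stratified subset of $M$ whose open dense stratum has codimension $2$, which establishes the theorem.

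The step I expect to be the main obstacle is reconciling Definition \ref{def:globalgen} with transversality of $H$ to the \emph{entire} stratification $\{\hat{\mathcal{S}}_\sigma\}$: it is transversality to the singular (codimension $\geq 4$) strata that forces the frontier relations among the $\Sigma_\sigma$ and hence the purity $\overline{\Sigma_{(2,1,\ldots,1)}} = S_h$, so one must be sure the definition genuinely builds this in rather than only governing the top stratum, and that the induced stratification of $E$ is well defined and globally Whitney independently of the chosen trivialisations. The two facts used off the shelf — that the $\Or(n)$-orbit-type stratification of $\Sym_0(n)$ is Whitney-regular, and that transverse pre-images of Whitney stratifications are Whitney — are classical and can simply be cited.
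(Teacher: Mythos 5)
Your proposal is correct and follows the same broad strategy as the paper: interpret $S_h$ as the preimage, under the section $H\colon M\to\Sym_0(TM)$, of a closed Whitney-stratified singular locus in the bundle, invoke the definition of genericity (transversality of $H$ to every stratum), and apply the standard fact that transverse preimages of Whitney stratifications are Whitney stratified of the same codimension. The real difference is in where the Whitney regularity of the fibrewise model comes from. You cite it as a classical fact about orbit-type stratifications of compact group actions (which is indeed correct and well documented, e.g. in Pflaum or Bierstone), working with unordered partitions $\sigma$ so that your strata $\mathcal{S}_\sigma$ are genuine $\Or(n)$-orbit-type strata. The paper instead works with ordered multiplicities $\mathfrak{m}=(m_1,\dots,m_L)$, so its faces $F(\mathfrak{m})$ are connected components of your $\mathcal{S}_\sigma$, and it \emph{proves} Whitney (a) via Proposition \ref{prop:transfaces} (normal spaces shrink as one passes to adjacent faces, using Lemma \ref{lem:submanifold}) and Whitney (b) via Proposition \ref{prop:conditionb} (distance tubes along a face are $b^{\pi}$-regular, citing Trotman \cite{Tro2}), assembling these into Theorem \ref{thm:whitney} and Lemma \ref{lem:SingM}. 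Your version is shorter at the cost of leaning on a more powerful external theorem; the paper's version is self-contained and, as a bonus, produces the explicit normal-space description \eqref{normalspace} that is reused in the normal-form Theorem \ref{thm:normal-form}. Your codimension count $\sum_i\tfrac12(m_i-1)(m_i+2)$ agrees with the paper's $\sum_i d(m_i)$, and your additional remarks on the frontier condition and the density of $\Sigma_{(2,1,\dots,1)}$ in $S_h$ make explicit something the paper leaves implicit (via Lemma \ref{lem:closure}); one should only add the caveat that if $\Sigma_{(2,1,\dots,1)}$ and all deeper strata are empty, then $S_h=\emptyset$, which the paper's statement allows.
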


As a consequence of Theorem \ref{thm2}, we find that if $n=2$, the multiplicity locus of a generic direction will be a finite (possibly empty) set of points. 
If $n=3$, the multiplicity locus will be the disjoint union of a finite number of copies of $\mathbb{S}^1$. However, for $n\ge 4$, the multiplicity locus will in general not be a smooth submanifold.
 The strata of the singularity locus are just the points where the ordered eigenvalues of 
$H$ have a fixed multiplicity. The regular part $R_h:=M\backslash S_h$ is an open set of full measure and the tangent bundle
of $R_h$ splits as a sum of line bundles.

We conclude the introduction by discussing connections to topology. 
\begin{question}\label{q2}
To what extent does the topology of the singular set $S_h \subset M$ determine the topology of $M$
for generic $h$?
\end{question}

Question \ref{q2} is motivated by the 
Poincar\'e-Hopf theorem: the sum of the indices
of a smooth vector field with finitely
many zeros equals the Euler characteristic
of $M$ (see \cite[§6]{MW}). 
Notice, that
if $X$ is a generic vector field, 
that is if all the zeros
of $X$ are non-degenerate, then these zeros are 
isolated and have index $\pm 1$. Thus
the singular set of a non-degenerate vector field
provides topological information about 
the underlying smooth manifold.
In particular, as is well-known, there exists
a vector field without zeros if and only if the Euler characteristic vanishes.

Continuing in this direction, we would now like to ask when there exist generic directions $h$ which avoid certain strata.
For instance, if $S_h =\emptyset$, then the tangent bundle $TM$ splits as a sum of line bundles,
i.e., $M$ is line-element parallelizable \cite{MS}. For the converse, see Lemma \ref{lem:lepar}.
Thus, if $M$ is not line-element
parallelizable, then for every generic $h$
we must have $S_h \neq \emptyset$.
If, in addition, $M$
is simply connected, then $M$ is parallelizable, thus orientable, and 
all characteristic classes vanish.
Examples of compact, simply-connected parallelizable manifolds
are compact, simply connected Lie groups
and $S^7$. Compact
examples which are not simply connected
are quotients of Lie groups by a discrete
cocompact lattice. Moreover,
any orientable compact $3$-manifold $M$
is parallelizable \cite{Sti}.
A compact, oriented $4$-manifold is parallelizable
if and only if $w_2(M)=p_1(M)=e(M)=0$ \cite{HH}. For dimensions 
$5,6,7$ see \cite{Thom}.

 If $M$ is not line-element parallelizable, then the next case of interest is when $S_h$ is a union of
compact codimension two submanifolds
where precisely two of the eigenvalues of $H$
agree, i.e., we have only eigenvalue multiplicities
of type $(2,1,\cdots,1),...,(1,\cdots,1,2)$. We assume again for simplicity that
$M$ is simply connected and that only one of the eigenvalue type multiplicities
occurs, say $(1,\cdots,1,2)$. We completely understand when such a multiplicity cannot be avoided: 
$M$ cannot be stably parallelizable
(that is $TM \oplus (M\times \RR)$ is a trivial vector bundle over $M$), because in this scenario, $M$ is parallelizable
if and only if $M$ is stably parallelizable. Indeed, the main theorem of \cite{BK} implies that  if $M^n$ is simply-connected and stably parallelizable, then the span $\sigma(M^n)$ (the maximal number 
of linearly independent vector fields) is either equal to $\sigma(\mathbb{S}^n)$ or equal to $n$. Note that 
$\sigma(\mathbb{S}^n)$ has been computed by Adams \cite{Ad}: if $n\ge 3$, then $\Sigma(\mathbb{S}^n)<n-2$ or $\Sigma(\mathbb{S}^n)$. Our assumption that $(1,1,\cdots,2)$ and $(1,\cdots,1)$ are the only occurring multiplicity types implies that, $\sigma(M^n)\ge n-2$, so in fact $\sigma(M^n)=n$. 

\subsection*{Acknowledgements}
We would like to thank Armin Rainer for sharing with us the proof of Lemma \ref{lem:goodframe}
and Johannes Ebert for very helpful discussions. 
The first-named author was funded by the Deutsche Forschungsgemeinschaft (DFG, German
Research Foundation) under Germany’s Excellence Strategy EXC 2044-390685587, Mathematics
M\"unster: Dynamics-Geometry-Structure, and the Collaborative Research Centre CRC 1442, Geometry: Deformations and Rigidity. The second-named author was supported by the Australian Research Council through grant DE220100919.

%Another case of interest is when eigenvalue %bifurcations of
%type $(2,2,...,2)$ on even-dimensional manifolds %occurs at finitely many points. 
%%Again, the question is when can such a %multiplicity be avoided. Also this is completely %understood
%We expect that generic evasion of such points is %related to the Euler class. 

%We mention here also that variants of both %of Theorem \ref{thm1} and Theorem \ref{thm2} %hold true for arbitrary directions
%$h \in S^2(T^*M)$ and also for noncompact %manifolds.
%Notice, that this is not true anymore for points on the multiplicity locus (see
%Example \ref{exa:bifurk}).

%For vector fields, 
%that is for sections of $TM$ the singular set
%would just be the set of zeros.
%The vector field would be non-degenerate
%if the differential of said vector field at all zeros is invertible.
%For a given generic $H$, we use $h$ to denote $H^{\flat}\in (S^2 T^* M)$; we will call this a `generic direction'.

%%%%%%%%%%%%%%%%%%%%%%%%%%%%%%%%%%%%%%%%%%%%%%%%%%%%%%%%%
%%%%%%%%%%%%%%%%%%%%%%%%%%%%%%%%%%%%%%%%%%%%%%%%%%%%%%%%%

\section{Asymptotic behaviour of scalar curvature}\label{preliminaries}

\subsection{Spaces of Riemannian metrics}
\label{sec:spac-riem}
Let $M$ be a smooth, connected, compact manifold of dimension $n\geq 2$.
%(Many of the results to follow will also hold if $M$ is not closed, but we
%restrict ourselves to this situation in order to simplify technical details.)  
We denote by $S^2(T^* M)$ the vector bundle of symmetric
$(0,2)$-tensor fields on $M$, and by $\Met \subset \Gamma(S^2 (T^* M))$
the set of all smooth Riemannian metrics on $M$, i.e. $\Met$
is the set of all
smooth, positive definite sections of $S^2(T^* M)$. Since $\Met$ is
an open subset of   $S^2(T^* M)$ in the $C^{\infty}$ topology, it is a smooth Fréchet manifold  (see \cite{Bes}, \mbox{Chapter 4}).
As a consequence,  for any $g \in \Met$, the tangent space $T_g \Met$ 
can be canonically identified with $\Gamma(S^2 (T^* M))$.

We can define a Riemannian metric on $\Met$, called the \emph{Ebin}
or \emph{$L^2$-metric}, by
\begin{equation*}
  \langle h, k \rangle_g = \integral{M}{}(h, k)_g\,{\mu_g},
\end{equation*}
where $h, k \in T_g \Met$ and in local coordinates 
$$
  (h, k)_g = g^{ij} h_{il} g^{lm} k _{jm}\,.
$$ 
Let $\mu:M\to (0,\infty)$ be now any smooth positive density on $M$
with $\int_M 1 \mu=1$.  
We denote by $\Metmu$
the set of all smooth Riemannian metrics 
having volume density $\mu$, that is
\begin{equation*}
  \Metmu = \{ g \in \Met \mid \mu_g = \mu \}.
\end{equation*}
By a result of Moser \cite{Msr}, all Riemannian metrics on $M$
of volume $1$ on $M$ can
be brought into $\Metmu$ via the action of the diffeomorphism group.
The space $\Metmu$ is a smooth Fréchet submanifold of $\Met$ and
for the tangent space of $\Metmu$ at a Riemannian metric $g_0$ we have
\begin{equation*}
  T_{g_0} \Metmu = \{ h \in \Gamma(S^2 (T^* M)) \mid 
  \tr_{g_0}(h) \equiv 0 \}\,,
\end{equation*}
where $\tr_{g_0}(h) = \tr(g_0^{-1} h) = (g_0,h)_{g_0}$. The 
restriction of the $L^2$-metric to $\Metmu$ defines a Riemannian
metric on $\Metmu$, which we still denote by $L^2$.
 Endowed with
this metric, the space $\Metmu$ can be considered 
an infinite-dimensional symmetric space modeled on sections of a fiber bundle over $M$ with
fibers isomorphic to ${\rm SL}(n) / {\rm SO}(n)$.  

Next, for any $h \in T_{g_0}\Metmu$, we have 
\begin{align}\label{eqn:H}
      h(X,Y)=g_0(H\cdot X,Y)
\end{align}
for a unique $g_0$-self-adjoint and traceless endomorphism $H$ of $TM$, 
for all smooth vector fields $X,Y$ on $M$. 
It is well-known, (see \cite{Cl} Section 4.2), 
that the Ebin geodesic $\gamma_h(t)$ of $(\Metmu,L^2)$
with $\gamma_h(0)=g_0$ and $\gamma_h'(0)=h$ is given by
$$
 \gamma_h(t)(\,\cdot \,,\cdot) =g_0(\exp(tH)\,\cdot\,,\cdot).
 $$ 
Here $\exp(H)=\sum_{k=0}^\infty \tfrac{H^k}{k!}$ 
denotes the exponential map of endomorphisms.

\begin{rem}
Having fixed $g_0$ once and for all,
we can consider $H$ a smooth map
\begin{eqnarray}\label{Sym0TM}
    H:M \to \Sym_0(TM)\,,
\end{eqnarray}
where
$$
\Sym_0(TM):=\bigcup_{p \in M}\{F_p \in {\rm End}(T_pM):
    \tr F_p=0 \textrm{ and } F_p \textrm{ is }
    (g_0)_p\textrm{-self-adjoint}\}
$$
is a smooth vector bundle over $M$.
\end{rem}

\subsection{The scalar curvature of Ebin geodesics}
\label{sec:scal-Ebin}

Let $(M,g_0)$ be a Riemannian manifold with volume form $\mu:=\mu_{g_0}$.
In this subsection, we will compute the scalar curvature of the Ebin geodesics $\gamma_h(t)\in \Metmu$ 
in certain open neighbourhoods of $M$ for all $t\ge 0$.

For endomorphisms 
$  S \in {\rm End}(V)  $
of Euclidean vector spaces 
$(V,\langle \,\cdot , \cdot \rangle)$ 
we will use the operator norm
$$
 \Vert S \Vert:=\max \{\Vert S \cdot v\Vert:
 \Vert v \Vert =1\,,\,\,v \in V\}.
 $$
Recall that $\Vert S_1+S_2\Vert \leq \Vert S_1 \Vert + \Vert S_2\Vert$,
$\Vert S_1\cdot S_2\Vert \leq \Vert S_1\Vert
\cdot \Vert S_2\Vert$
and that 
$\Vert A\cdot S\cdot A^t\Vert =\Vert S\Vert$
for all $A \in \rm{O}(V,\langle \,\cdot , \cdot \rangle) $. 

Let $H\in \Gamma({\rm Sym}_0(TM))$ be associated to  $h \in T_{g_0}\Metmu$ via \eqref{eqn:H} and \eqref{Sym0TM}. 
The following lemma, which we largely owe to Armin Rainer, gives us a good block diagonal form for $H$, according to a certain clustering of the eigenvalues.
The constants $C_2(n),\tilde C_2(n)\geq 1$
will be defined in Lemma \ref{lowerboundgIt}
and Lemma \ref{upperboundgIt}.

\begin{lem}\label{lem:goodframe}
Let $(M,g_0)$ be a Riemannian manifold,
$p \in M$ and $(U,x)$ be a coordinate neighbourhood of
$p$ with $x(U)=B_4(0) \subset \RR^n$, such that
on $U$ there exists an $g_0$-orthonormal frame $e_0$.
Moreover,
let $L\geq 2$ and 
 $(m_1,\cdots,m_L)$ be a tuple of positive integers, summing to $n$, and real numbers $(\lambda_1^*,\cdots,\lambda_L^*)$ appearing in (strictly) ascending order, with $\sum_{i=1}^{L}m_i \lambda_i^*=0$. Let
 $$
 r:=\min\{\left|\lambda^*_{i+1}-\lambda^*_i\right|: i=1,\cdots, L-1\}
 \quad\textrm{ and }\quad
  \epsilon := \tfrac{r}{12000 \cdot (C_2(n)+\tilde C_2(n))}.
$$
Now  let $H$ be 
a smooth $g_0$-self-adjoint, traceless endomorphism
of $TM$. 
 Suppose that
for all $q\in x^{-1}(\overline{B_2(0)})$, there are $m_i$ eigenvalues of $H(q)$ in $(\lambda^*_i-\epsilon,\lambda^*_i+\epsilon)$, for all $i=1,\cdots ,L$.
Then,  there exists
 smooth functions $\lambda_1,\cdots,\lambda_L:x^{-1}(B_2(0))\to \RR$, 
and 
smooth traceless block functions $S_i:x^{-1}(B_2(0))\to \Sym_0(m_i)$, as well as a smooth $g_0$-orthonormal 
frame field $e_h=\bigcup_{i=1}^{L}\{e_{i_a}\}_{a=1}^{m_i}$ on $x^{-1}(B_2(0))$ 
such that on
$x^{-1}(B_2(0))$ the endomorphism $H$ takes the form
\begin{align}\label{blockdiagonalH}
     H\vert_{x^{-1}(B_2(0))} &=
              \left( \begin{array}{ccc} \lambda_1I_{m_1}+S_1 && \\ & \ddots & \\ && \lambda_LI_{m_L}+S_L \end{array}\right).
\end{align}
Moreover, on $x^{-1}(B_2(0))$ we have
\begin{align}\label{eigenvaluer2}
     \min\{\left|\lambda_{i+1}-\lambda_i\right|, i=1,\cdots, L-1\}
     &\ge 
     \tfrac{r}{2},\\
     \max\{ \vert \lambda_i - \lambda_i^*\vert, i=1, \cdots, L\} 
     &\leq \epsilon \nonumber ,\\
   \Vert S\Vert:= \max\{ \Vert S_i \Vert: i=1,\cdots, L\} 
    &\leq 4\epsilon.  \nonumber
\end{align}
\end{lem}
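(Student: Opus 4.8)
The plan is to build the block decomposition from the \emph{Riesz spectral projections} attached to the eigenvalue clusters. The key point, which we largely owe to Armin Rainer, is that although the individual eigenvalues of $H$ need not depend smoothly on the base point when their multiplicities vary \emph{within} a cluster, the orthogonal projections onto the cluster subspaces do depend smoothly, because the clusters stay uniformly separated; the (possibly non-smooth) splitting of each cluster into its individual eigenvalues is then repackaged as the smooth traceless block $S_i$. We will use repeatedly that $\epsilon\le r/4$ --- which is immediate from the definition of $\epsilon$ and from $C_2(n),\tilde C_2(n)\ge 1$ --- so in particular $\epsilon<r/2$, the intervals $(\lambda_i^*-\epsilon,\lambda_i^*+\epsilon)$ are pairwise disjoint, and by hypothesis they together contain all $n$ eigenvalues of $H(q)$ for every $q\in x^{-1}(\overline{B_2(0)})$, the $i$-th interval containing exactly $m_i$ of them.

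First I would fix $\rho:=r/3$, so that $\epsilon<\rho<r-\epsilon$, and let $\Gamma_i\subset\mathbb{C}$ be the positively oriented circle of radius $\rho$ about $\lambda_i^*$. Then the $\Gamma_i$ are pairwise disjoint and, uniformly for $q\in x^{-1}(B_2(0))$, each $\Gamma_i$ encloses the $i$-th cluster of eigenvalues of $H(q)$ and no other part of the spectrum. I would then define
\[
 P_i(q):=\frac{1}{2\pi\sqrt{-1}}\oint_{\Gamma_i}\bigl(z\,\Id_{T_qM}-H(q)\bigr)^{-1}\,dz,
\]
which depends smoothly on $q$ because $\Gamma_i$ stays a definite distance from the spectrum of $H(q)$ (holomorphic functional calculus / Kato perturbation theory). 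The standard contour identities give $P_i(q)^2=P_i(q)$, $P_i(q)P_j(q)=0$ for $i\ne j$, and $\sum_i P_i(q)=\Id$; and since $H(q)$ is $g_0$-self-adjoint and each $\Gamma_i$ is symmetric about the real axis, $P_i(q)$ is $g_0$-self-adjoint, hence an orthogonal projection onto the span of the eigenvectors of $H(q)$ belonging to the $i$-th cluster. Thus $E_i:=\im P_i$ is a smooth rank-$m_i$ subbundle of $TM|_{x^{-1}(B_2(0))}$, the subbundles $E_1,\dots,E_L$ are mutually $g_0$-orthogonal and $H$-invariant, and $TM|_{x^{-1}(B_2(0))}=E_1\oplus\cdots\oplus E_L$.

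Next I would produce the frame and the blocks. Since $x^{-1}(B_2(0))$ is diffeomorphic to the ball $B_2(0)$, hence contractible, each $E_i$ is a trivial vector bundle; choosing a smooth frame of $E_i$ and applying the Gram--Schmidt process with respect to $g_0$ yields a smooth $g_0$-orthonormal frame $\{e_{i_a}\}_{a=1}^{m_i}$ of $E_i$. Then $e_h:=\bigcup_{i=1}^{L}\{e_{i_a}\}_{a=1}^{m_i}$ is a $g_0$-orthonormal frame of $TM|_{x^{-1}(B_2(0))}$ in which $H$ is block diagonal with blocks $H|_{E_i}$, because the $E_i$ are $H$-invariant and mutually orthogonal. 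I would then set $\lambda_i(q):=\tfrac{1}{m_i}\tr\bigl(H(q)|_{E_i(q)}\bigr)$, which is smooth, and let $S_i$ be the matrix of $H|_{E_i}-\lambda_i\Id_{E_i}$ in the frame $\{e_{i_a}\}$; by construction $S_i$ is smooth, symmetric (the frame is $g_0$-orthonormal and $H$ is $g_0$-self-adjoint) and traceless, and \eqref{blockdiagonalH} holds.

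The estimates in \eqref{eigenvaluer2} are then bookkeeping. The eigenvalues of $H(q)|_{E_i(q)}$ are exactly the $m_i$ eigenvalues of $H(q)$ in $(\lambda_i^*-\epsilon,\lambda_i^*+\epsilon)$, so their average $\lambda_i(q)$ satisfies $|\lambda_i(q)-\lambda_i^*|\le\epsilon$; the eigenvalues of the symmetric matrix $S_i$ are the differences of those eigenvalues from $\lambda_i(q)$, each of modulus at most $2\epsilon$, whence $\Vert S_i\Vert\le 2\epsilon\le 4\epsilon$; and $|\lambda_{i+1}(q)-\lambda_i(q)|\ge|\lambda_{i+1}^*-\lambda_i^*|-2\epsilon\ge r-2\epsilon\ge r/2$. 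The one genuine obstacle is to arrange the argument so that smoothness of the individual eigenvalues is never needed; the uniform spectral gap guaranteed by the hypothesis (through the choice of $\epsilon$) is exactly what makes the cluster projections $P_i$ smooth, and from there everything is routine.
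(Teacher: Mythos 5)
Your proposal is correct; the estimates at the end are even slightly sharper than the paper's (you get $\Vert S_i\Vert\le 2\epsilon$ where the statement asks only for $4\epsilon$), and the use of $\epsilon\le r/4$ is indeed guaranteed by $C_2(n),\tilde C_2(n)\ge 1$. The difference from the paper is in how the smooth cluster subbundles $E_i$ are produced. You use the Riesz--Dunford spectral projections $P_i(q)=\tfrac{1}{2\pi\sqrt{-1}}\oint_{\Gamma_i}(z\Id-H(q))^{-1}\,dz$, whose smoothness in $q$ comes from the uniform spectral gap keeping $\Gamma_i$ a fixed distance from $\operatorname{spec}H(q)$; the projections are then automatically idempotent, mutually annihilating, $g_0$-self-adjoint, and sum to the identity, giving the $H$-invariant orthogonal decomposition directly. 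The paper instead factors the characteristic polynomial $P^x=\prod_i P^x_i$ into cluster factors, invokes Rainer's Lemma 3.2 (and Cayley--Hamilton) to get smoothness of the coefficients of $P^x_i$, and identifies $E_i$ as the kernel of the constant-rank map $x\mapsto P^x_i(H(x))$. After that the two proofs coincide: trivialize each $E_i$ over the ball, Gram--Schmidt for a smooth $g_0$-orthonormal frame, set $\lambda_i$ to the block trace average and $S_i$ to the traceless remainder, and verify the inequalities. Your route is self-contained and avoids the external citation to Rainer; the paper's route is more algebraic and makes the smooth dependence of the polynomial factors explicit, which the authors reuse in spirit in Lemma \ref{lem:smoothframe}. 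Both are standard and both work here.
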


\begin{proof}
We pull back the metric $g_0$ and the given
$g_0$-orthonormal frame to a smooth metric 
$\tilde g$ on $B_4(0)$ and a smooth $\tilde g$-orthonormal frame $\tilde e_0$. Using this basis,
we consider 
$ H : B_4(0) \to \Sym_0(n)$ a smooth map. The eigenvalue clustering hypothesis implies that 
for each $x \in B_2(0)$
 and each $i=1,\cdots , L$
there are $m_i$ eigenvalues of $H$ in
$(\lambda^*_i-\epsilon,\lambda^*_i+\epsilon)$.

The characteristic
polynomial $P^x(Z)$ of $H(x)$, $x \in B_4(0)$,
depends smoothly on $x$. Moreover,
the eigenvalue property implies that 
for all $x \in B_2(0)$ we have
 \begin{align*}
     P^x(Z)=\prod_{i=1}^L P_i^x(Z)
 \end{align*}
 for polynomials $P_i^x(Z)$ 
 of degree $m_i$
 whose zeroes all lie in $(\lambda_i^*-\epsilon,\lambda_i^*+\epsilon)$, $i=1,\cdots,L$.
By \cite{Rai}, Lemma 3.2 (see also \cite{AKLM})
the coefficients of the polynomials $P_i^x(Z)$ are smooth in $x$ for all $i=1,...,L$.

Define smooth maps
$L_i :B_2(0) \to \Sym(n)$ for $i=1,...,L$ with $L_i(x) := P_i^x(H(x))$.
By the theorem of      
Cayley-Hamilton we have   $\prod_{i=1}^L L_i(x)=0$ for all $x \in B_2(0)$.
Let $E_i(x)$ denote the kernel of $L_i(x)$
for $x \in B_2(0)$, $i=1,\cdots ,L$. 
Since the smooth family of endomorphisms $L_1(x)$ have constant rank for all $x \in B_2(0)$,
the kernel $E_1(x)$ depends smoothly on $x$,
and consequently
$E_1$ is a smooth subbundle of the trivial vector bundle $B_2(0) \times \RR^n \to  B_2(0)$. Therefore,
 we can find smooth pairwise orthonormal vectors 
$(f_1,\cdots ,f_{m_1}):B_2(0) \to \RR^n$ 
with respect to $\tilde g$
such that for all $x \in B_2(0)$ we have
$E_1(x)=\langle f_1(x),\cdots ,f_{m_1}(x)\rangle_\RR$.
Since the same is true for $i=2,\cdots ,L$, we conclude that $H$ has 
a block decomposition given by $(H_1,\cdots ,H_L)$.
We set $\lambda_i:=\tfrac{1}{m_i}\cdot \tr H_i$
and $S_i:=H_i-\lambda_i \cdot I_{m_i}$,
$i=1,\cdots ,L$. Then $\lambda_i \in 
(\lambda_i^*-\epsilon,\lambda_i^*+\epsilon)$
and $\Vert S_i\Vert \leq 4 \epsilon$, $i=1,\cdots ,L$.
This shows the claim.
\end{proof}

\begin{rem}
Let us mention that $E_1(x)$ is simply the sum of the eigenspaces
of $H(x)$ corresponding to the eigenvalues in $\lambda_1(x),\cdots ,\lambda_{m_1}(x)$
and that the example $U=\mathbb{S}^2$
and $H= I$ shows that we cannot necessarily find
this nice diagonalizing orthonormal basis
globally. Recall also, that in general the
eigenvalues $\lambda_i(x)$ will depend continuously on $x$ but not differentiably: see Section \ref{sec:Casen2}.
\end{rem}

\begin{lem}\label{lem:smoothframe}
Let $H_0$ be a smooth $g_0$-self-adjoint traceless endomorphism of $TM$. Let $L\geq 2$ and 
 $(m_1,\cdots,m_L)$ be a tuple of positive integers, summing to $n$, and real numbers $(\lambda_1^*,\cdots,\lambda_L^*)$ appearing in (strictly) ascending order, with $\sum_{i=1}^{L}m_i \lambda_i^*=0$, and define $r,\epsilon$ as in Lemma \ref{lem:goodframe}. Let $(U,x)$ be a coordinate neighbourhood of
$p$ with $x(U)=B_4(0) \subset \RR^n$ and $x(p)=0$, as in Lemma \ref{lem:goodframe}. In addition, suppose that there are $m_i$ eigenvalues of $H_0(q)$ within $\frac{\epsilon}{10}$ of $\lambda_i^*$ for each $q\in U$. Then there is a compact neighbourhood $K_p$ of $p$ and an open neighbourhood of $H_0$ in the $C^{\infty}$ topology on $\Sym_0(TM)$ on which the $g_0$-orthonormal frame field $e_h$ from Lemma \ref{lem:goodframe}, restricted to $K_p$, can be chosen to depend smoothly on $h$. 
\end{lem}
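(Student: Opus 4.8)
The plan is to rerun the construction from the proof of Lemma~\ref{lem:goodframe}, but to replace the factorisation of the characteristic polynomial by Riesz spectral projections written as contour integrals; this makes the dependence on $H$ (equivalently on $h$) manifestly smooth, even in the infinite-dimensional $C^{\infty}$ setting, and it produces the block subbundles directly. First I would note that the clustering hypothesis is stable under small $C^0$ perturbations: the numbers $\lambda_1^*,\dots,\lambda_L^*$ are strictly increasing with consecutive gaps at least $r=12000\,(C_2(n)+\tilde C_2(n))\epsilon$, so the closed intervals $\overline{I_i}:=[\lambda_i^*-\epsilon,\lambda_i^*+\epsilon]$ are pairwise disjoint and separated by more than $2\epsilon$. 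Since the eigenvalues of a $g_0$-self-adjoint endomorphism depend continuously (indeed Lipschitz) on it in the operator norm, and $x^{-1}(\overline{B_2(0)})$ is compact, there is a $C^0$-neighbourhood --- hence a $C^{\infty}$-neighbourhood --- $\mathcal U$ of $H_0$ in $\Gamma(\Sym_0(TM))$ such that every $H\in\mathcal U$ has exactly $m_i$ eigenvalues in $I_i$ at each point of $x^{-1}(\overline{B_2(0)})$, for all $i$; in particular Lemma~\ref{lem:goodframe} applies to each such $H$. Fix, for each $i$, the circle $\Gamma_i\subset\mathbb C$ of radius $2\epsilon$ about $\lambda_i^*$; by the separation just recorded, $\Gamma_i$ encircles $\overline{I_i}$ and stays at positive distance from the spectrum of $H(q)$ for every $H\in\mathcal U$ and $q\in x^{-1}(\overline{B_2(0)})$.

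Next I would set
\begin{equation*}
  \Pi_i(q,h):=\frac{1}{2\pi i}\oint_{\Gamma_i}\bigl(z\,\Id_{T_qM}-H(q)\bigr)^{-1}\,dz\,.
\end{equation*}
Because $H(q)$ is $g_0$-self-adjoint and $\Gamma_i$ encloses precisely the eigenvalues of $H(q)$ in $I_i$, the operator $\Pi_i(q,h)$ is the $g_0$-orthogonal projection of $T_qM$ onto the span $E_i(q,h)$ of the corresponding eigenspaces; it has rank $m_i$, the $\Pi_i$ are mutually orthogonal, and $\sum_{i=1}^L\Pi_i(q,h)=\Id$. The map $(q,h)\mapsto\Pi_i(q,h)$ is smooth, being a composition of the fibrewise-linear correspondence $h\mapsto H$, the map $H\mapsto(z\,\Id-H)^{-1}$ for $z\in\Gamma_i$ (smooth away from the spectrum by the Neumann-series/implicit-function argument), and integration over the fixed compact contour $\Gamma_i$, which is bounded and linear; each is a smooth operation on the pertinent Fréchet spaces of sections. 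Hence $E_i=\im\Pi_i$ is a smooth subbundle of $TM$ over $x^{-1}(\overline{B_2(0)})$ varying smoothly with $h\in\mathcal U$, and it coincides with the subbundle $E_i$ produced in the proof of Lemma~\ref{lem:goodframe} for $H$ (the sum of eigenspaces for the eigenvalues in $I_i$).

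It remains to pick orthonormal frames of the $E_i(q,h)$ smoothly and simultaneously near $p$. Let $e_0=(e_0^1,\dots,e_0^n)$ be the given $g_0$-orthonormal frame on $U$. Surjectivity of $\Pi_i(p,H_0)$ onto $E_i(p,H_0)$ gives, for each $i$, an index set $A_i\subset\{1,\dots,n\}$ with $|A_i|=m_i$ for which $\{\Pi_i(p,H_0)\,e_0^a:a\in A_i\}$ is linearly independent, i.e.\ an $m_i\times m_i$ minor of $\Pi_i$ is non-zero at $(p,H_0)$. Since non-vanishing of this minor is an open condition and there are only finitely many $i$, there are a compact neighbourhood $K_p:=x^{-1}(\overline{B_\rho(0)})$ of $p$ and a $C^{\infty}$-neighbourhood $\mathcal V\subseteq\mathcal U$ of $H_0$ so that $\{\Pi_i(q,h)\,e_0^a:a\in A_i\}$ stays linearly independent for all $q\in K_p$, $h\in\mathcal V$, $i$. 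Applying Gram--Schmidt (with respect to $g_0$) to these $m_i$ smoothly varying vectors yields $g_0$-orthonormal vector fields $e_{i_1},\dots,e_{i_{m_i}}$ spanning $E_i(q,h)$ and depending smoothly on $(q,h)$; put $e_h:=\bigcup_{i=1}^L\{e_{i_a}\}_{a=1}^{m_i}$. As $E_i$ is $H$-invariant, in this frame $H$ has the block form \eqref{blockdiagonalH} with $\lambda_i(q,h)=\tfrac1{m_i}\tr\bigl(H(q)\,\Pi_i(q,h)\bigr)$ and $S_i$ the matrix of $H|_{E_i}-\lambda_i I_{m_i}$ in $\{e_{i_a}\}$ --- both smooth in $(q,h)$ --- while the estimates \eqref{eigenvaluer2} hold just as in Lemma~\ref{lem:goodframe}, since the eigenvalues of $H(q)$ still lie in the intervals $I_i$. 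Thus $e_h|_{K_p}$ is an admissible choice of the frame of Lemma~\ref{lem:goodframe}, and $h\mapsto e_h|_{K_p}$ is smooth on $\mathcal V$.

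I expect the only real obstacle to be the verification that the spectral data depend smoothly on $H$ as a \emph{section} --- that is, on $h$ in the $C^{\infty}$ Fréchet topology, not merely pointwise --- which is precisely why I route everything through the contour-integral formula for $\Pi_i$, reducing it to smoothness of the resolvent and linearity of integration rather than to a parametrised Rainer-type factorisation theorem. The remaining effort is bookkeeping: arranging that a single compact $K_p$, a single family of index sets $A_i$, and a single neighbourhood $\mathcal V$ serve all the finitely many constraints at once.
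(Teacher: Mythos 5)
Your proposal is correct, but it routes the smoothness argument differently than the paper does. The paper proves smooth dependence of the block subbundles $E_i$ on $h$ by invoking Rainer's result (\cite{Rai}, Lemma 3.2) that the coefficients of the factors $P_i^{\tilde H}$ of the characteristic polynomial depend smoothly on $\tilde H \in \Sym_0(n)$ near $\tilde H_0$, then forming $L_i(\tilde H)=P_i^{\tilde H}(\tilde H)$ and using constant rank to get a smoothly varying basis $B_i$ of $\ker L_i(\tilde H)$; your proof instead expresses the $g_0$-orthogonal projection onto $E_i$ as a Riesz contour integral of the resolvent over a fixed circle $\Gamma_i$ about $\lambda_i^*$, for which smooth (indeed analytic) dependence on $H$, and hence on $h$, is immediate once the contour is uniformly separated from the spectrum over the compact base set. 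The two constructions produce the same subbundles, since $\im\Pi_i=\ker P_i^H(H)$, and both finish by selecting a local smooth basis of $E_i$ and applying Gram--Schmidt. What the Riesz-projection route buys you is self-containment (no appeal to a parametrised polynomial-factorisation theorem) and a cleaner reduction of the infinite-dimensional smoothness claim to smoothness of the resolvent and linearity of integration; what it costs is nothing here, since the hypotheses already hand you the uniform spectral gap needed to fix the contours. You are also somewhat more explicit than the paper on the final frame-selection step --- fixing index sets $A_i$ so that $\{\Pi_i(q,h)\,e_0^a : a\in A_i\}$ stays independent on a common $K_p\times\mathcal{V}$ --- whereas the paper simply asserts the existence of the smooth basis maps $B_i$. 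Both arguments are valid.
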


\begin{proof}
    Define $\tilde{H}_0\in \Sym_0(n)$ to be the diagonal matrix with the same eigenvalues as $H_0(p)$, listed in ascending order. As in the proof of Lemma \ref{lem:goodframe}, the characteristic polynomial $P^{\tilde H}(Z)$ appears as 
    \begin{align*}
        P^{\tilde H}(Z)=\prod_{i=1}^L P_i^{\tilde H}(Z)
    \end{align*}
    for polynomials $P_i^{\tilde H}$ of degree $m_i$ with coefficients depend smoothly on $\tilde H \in \Sym_0(n)$ for $\tilde H$ close to
    $\tilde H_0$. 
   For each $i\in\{1,\cdots,L\}$, the function $L_i:\Sym_0(n)\to \Sym(n)$ sending a matrix $\tilde H$ to $P_i^{\tilde H}(\tilde H)$, has constant rank close to $\tilde{H}_0$, so its kernel depends smoothly on $\tilde H$. Therefore, there is a map $B_i:\Sym_0(n)\to (\mathbb{R}^n)^{m_i}$ which gives us a smoothly-varying basis of the kernel of $L_i(\tilde H)$ for $\tilde H$ close to $\tilde{H}_0$.

Now, for each $H$ close to $H_0$,  we use a $\tilde g_0$-orthonormal
frame $\tilde e_0$ on $B_2(0)$ to consider $H$ as a family
of symmetric endomorphisms. For those choices of $H$ for which there are $m_i$ eigenvalues of $H(q)$ within $\frac{\epsilon}{5}$ of $\lambda_i^*$ for each $q\in U$, we can apply 
Lemma \ref{lem:goodframe} to construct
a $g_0$-orthonormal frame $e_h$ on $U=x^{-1}(B_2(0))$
in which $H$ takes the block-diagonal form \eqref{blockdiagonalH}. In fact, since the above $B_i$ maps are smooth, for each $h\in \Sym_0(TM)$ close to $h_0$, there is a basis $f_h$, block-diagonalising $H$ as in Lemma \ref{lem:goodframe}, which depends smoothly on $h$. Applying Gram-Schmidt implies that our $g_0$-orthonormal frame $e_h$ can be chosen to vary smoothly with $H$ close to $H_0$. This implies the result. 
\end{proof}

\begin{rem}
For our applications it is actually sufficent 
that $e_h$ can be chosen to depend smoothly on $h$ in the 
$C^1$ topology.
\end{rem}

To compute the scalar curvature of $g_t:=\gamma_h(t)$ on a neighbourhood $K_p$ from Lemma \ref{lem:smoothframe}, 
it is convenient to find an orthonormal frame for $g_t$; the most natural choice is given by $\bigcup_{i=1}^{L}\{e^t_{i_a}\}_{a=1}^{m_i}$, where
    \begin{align*}
    e_{i_{a}}^t
    =
    \tfrac{1}{\sqrt{\alpha_i(t)}} \cdot \sum_{\tilde a=1}^{m_i}
    \exp\big(-\tfrac{S_i t}{2}\big)_{a\tilde a}\cdot e_{i_{\tilde a}},  \ \text{for} \  i=1,\cdots ,L \ \text{and} \ a=1,\cdots ,m_i,
\end{align*}
with
\begin{align*}
    \alpha_i(t)=\text{exp}(\lambda_i t).
    \end{align*}
Then we define the Christoffel symbols 
$$
(\Gamma_t)_{i_a,j_b}^{k_c}
:=
g_t(\nabla^t_{e^t_{i_a}}e^t_{j_b},e^t_{k_c}),
$$
where $\nabla^t$ is the Levi-Civita connection of $g_t$.

\begin{lem}
The Riemann curvature tensor $\Rc^t$ of $g_t$ on $K_p$ is given by \begin{eqnarray*}
  \lefteqn{g_t(\Rc^t_{e^t_{i_a}e^t_{j_b}}e^t_{k_c},e^t_{l_d})}&&\\
  &= &
  e^t_{i_a}(\Gamma_t)_{j_b,k_c}^{l_d} +e^t_{j_b}(\Gamma_t)_{i_a,l_d}^{k_c} 
  %\\
  %&& 
   +\sum_{q=1}^{L}\sum_{p=1}^{m_q} \big[
    (\Gamma_t)_{j_b,k_c}^{q_p} (\Gamma_t)_{i_a,q_p}^{l_d}
  - (\Gamma_t)_{i_a,k_c}^{q_p} (\Gamma_t)_{j_b,q_p}^{l_d}\\
  &&
  -\big((\Gamma_t)_{i_a,j_b}^{q_p}-(\Gamma_t)_{j_b,i_a}^{q_p}\big) (\Gamma_t)_{q_p,k_c}^{l_d} \big]\,.
\end{eqnarray*}
\end{lem}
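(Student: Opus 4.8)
The plan is to obtain the formula by unwinding the definition of the Riemann curvature tensor in the frame $\{e^t_{i_a}\}$, using crucially that this frame is $g_t$-orthonormal. Concretely, with the sign convention $\Rc^t_{XY}Z = \nabla^t_X\nabla^t_Y Z - \nabla^t_Y\nabla^t_X Z - \nabla^t_{[X,Y]}Z$, I would take $X=e^t_{i_a}$, $Y=e^t_{j_b}$, $Z=e^t_{k_c}$ and expand everything via $\nabla^t_{e^t_{i_a}}e^t_{j_b} = \sum_{q=1}^{L}\sum_{p=1}^{m_q}(\Gamma_t)_{i_a,j_b}^{q_p}\,e^t_{q_p}$.

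Before that, I would record the two structural facts that make the identity take the stated shape. First, $\{e^t_{i_a}\}$ is a $g_t$-orthonormal frame: since $H$ is block diagonal with blocks $H_i=\lambda_iI_{m_i}+S_i$ and $S_i$ symmetric, one has $\exp(tH_i)=\alpha_i(t)\exp(tS_i)$, and a direct computation using $g_t(\,\cdot\,,\cdot)=g_0(\exp(tH)\,\cdot\,,\cdot)$ together with the $g_0$-orthonormality of the $e_{i_a}$ gives $g_t(e^t_{i_a},e^t_{j_b}) = \delta_{ij}\bigl(\exp(-\tfrac{S_it}{2})\exp(tS_i)\exp(-\tfrac{S_it}{2})\bigr)_{ab} = \delta_{ij}\delta_{ab}$. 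Differentiating $g_t(e^t_{j_b},e^t_{k_c})\equiv\mathrm{const}$ along $e^t_{i_a}$ and using the metric compatibility of $\nabla^t$ then yields the antisymmetry
\[
(\Gamma_t)_{i_a,k_c}^{l_d} = -(\Gamma_t)_{i_a,l_d}^{k_c}.
\]
Second, since $\nabla^t$ is torsion-free, $[e^t_{i_a},e^t_{j_b}] = \nabla^t_{e^t_{i_a}}e^t_{j_b} - \nabla^t_{e^t_{j_b}}e^t_{i_a} = \sum_{q,p}\bigl((\Gamma_t)_{i_a,j_b}^{q_p}-(\Gamma_t)_{j_b,i_a}^{q_p}\bigr)e^t_{q_p}$.

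With these in hand I would compute term by term. Pairing $\nabla^t_{e^t_{i_a}}\nabla^t_{e^t_{j_b}}e^t_{k_c}$ against $e^t_{l_d}$ produces $e^t_{i_a}(\Gamma_t)_{j_b,k_c}^{l_d} + \sum_{q,p}(\Gamma_t)_{j_b,k_c}^{q_p}(\Gamma_t)_{i_a,q_p}^{l_d}$; subtracting the same expression with $i_a\leftrightarrow j_b$ swapped yields $-e^t_{j_b}(\Gamma_t)_{i_a,k_c}^{l_d} - \sum_{q,p}(\Gamma_t)_{i_a,k_c}^{q_p}(\Gamma_t)_{j_b,q_p}^{l_d}$; and $-\nabla^t_{[e^t_{i_a},e^t_{j_b}]}e^t_{k_c}$ paired with $e^t_{l_d}$ gives $-\sum_{q,p}\bigl((\Gamma_t)_{i_a,j_b}^{q_p}-(\Gamma_t)_{j_b,i_a}^{q_p}\bigr)(\Gamma_t)_{q_p,k_c}^{l_d}$. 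Summing the three contributions reproduces the claimed identity except that the second derivative term appears as $-e^t_{j_b}(\Gamma_t)_{i_a,k_c}^{l_d}$ rather than as $+e^t_{j_b}(\Gamma_t)_{i_a,l_d}^{k_c}$; the displayed antisymmetry above closes exactly this gap.

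There is no serious obstacle here: this is simply the standard formula for the curvature of a metric connection written in an orthonormal frame, and the only genuine care needed is the bookkeeping of the paired indices $i_a,j_b,k_c,l_d$ and the consistent use of the sign convention for $\Rc^t$. The one point worth flagging is that the step rewriting $-e^t_{j_b}(\Gamma_t)_{i_a,k_c}^{l_d}$ as $e^t_{j_b}(\Gamma_t)_{i_a,l_d}^{k_c}$ depends on $\{e^t_{i_a}\}$ being $g_t$-orthonormal, which is precisely why the normalisation by $\alpha_i(t)$ and $\exp(-\tfrac{S_it}{2})$ in the definition of $e^t_{i_a}$ is essential rather than cosmetic; accordingly, verifying that orthonormality is the key preliminary step of the proof.
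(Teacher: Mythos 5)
Your proposal is correct and is essentially the same computation as the paper's proof: both start from the definition $\Rc^t_{XY}Z=\nabla^t_X\nabla^t_YZ-\nabla^t_Y\nabla^t_XZ-\nabla^t_{[X,Y]}Z$, expand the double covariant derivatives via the Christoffel symbols, and rewrite the bracket term using torsion-freeness. The paper's write-up is more terse and leaves implicit the two points you spell out, namely the verification that $\{e^t_{i_a}\}$ is $g_t$-orthonormal (via $\exp(-\tfrac{S_it}{2})\exp(tS_i)\exp(-\tfrac{S_it}{2})=I$) and the antisymmetry $(\Gamma_t)_{i_a,k_c}^{l_d}=-(\Gamma_t)_{i_a,l_d}^{k_c}$ needed to convert $-e^t_{j_b}(\Gamma_t)^{l_d}_{i_a,k_c}$ into the displayed $+e^t_{j_b}(\Gamma_t)^{k_c}_{i_a,l_d}$; making those explicit is a genuine improvement in readability, not a different approach.
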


\begin{proof}
We have \begin{align*}
  g_t(\Rc_{e^t_{i_a}e^t_{j_b}}e^t_{k_c},e^t_{l_d})=
  g_t(\nabla^t_{e^t_{i_a}}\nabla^t_{e^t_{j_b}}e^t_{k_c}-\nabla^t_{e^t_{j_b}}\nabla^t_{e^t_{i_a}}e^t_{k_c} -\nabla^t_{[e^t_{i_a},e^t_{j_b}]}e^t_{k_c},e^t_{l_d})\,.
\end{align*}
The result then follows from the observation $[e^t_{i_a},e^t_{j_b}]=\nabla_{e^t_{i_a}}e^t_{j_{b}}-\nabla_{e^t_{j_b}}e^t_{i_a}$ so that

\begin{align*}
    g^t(\nabla^t_{[e^t_{i_a},e^t_{j_b}]}e^t_{k_c},e^t_{l_d})=\sum_{q=1}^{L}\sum_{p=1}^{m_q}((\Gamma_t)_{i_a,j_b}^{q_p}-(\Gamma_t)_{j_b,i_a}^{q_p})(\Gamma_t)_{q_p,k_c}^{l_d}.
\end{align*}
This shows the claim.
\end{proof}

As a consequence, we obtain 

\begin{cor}\label{scalrformula}
The scalar curvature $R(g_t)$ of $g_t$ on $K_p$ is given by 
\begin{align*}
R(g_t)=2\sum_{i_a,j_b}e^t_{i_a}(\Gamma_t)^{i_a}_{j_b,j_b}-\sum_{i_a,j_b,k_c}\left((\Gamma_t)_{i_a,i_a}^{k_c}(\Gamma_t)_{j_b,j_b}^{k_c}+(\Gamma_t)_{i_a,k_c}^{j_b}(\Gamma_t)_{k_c,i_a}^{j_b}\right).
\end{align*}
\end{cor}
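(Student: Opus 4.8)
The plan is to derive the scalar curvature formula from the Riemann curvature formula in the preceding lemma by taking the appropriate contractions, following the classical identity $R = \sum_{i_a, j_b} g_t(\Rc^t_{e^t_{i_a} e^t_{j_b}} e^t_{j_b}, e^t_{i_a})$, valid since $\{e^t_{i_a}\}$ is a $g_t$-orthonormal frame. First I would substitute $k_c \to j_b$ and $l_d \to i_a$ in the Riemann formula and sum over all frame indices $i_a$ and $j_b$. This produces five groups of terms: two first-order (derivative) terms and three quadratic (Christoffel-product) terms, coming respectively from $e^t_{i_a}(\Gamma_t)^{i_a}_{j_b,j_b}$, $e^t_{j_b}(\Gamma_t)^{j_b}_{i_a,i_a}$, and the three bilinear sums $\sum_{q_p}\bigl[(\Gamma_t)^{q_p}_{j_b,j_b}(\Gamma_t)^{i_a}_{i_a,q_p} - (\Gamma_t)^{q_p}_{i_a,j_b}(\Gamma_t)^{i_a}_{j_b,q_p} - ((\Gamma_t)^{q_p}_{i_a,j_b} - (\Gamma_t)^{q_p}_{j_b,i_a})(\Gamma_t)^{i_a}_{q_p,j_b}\bigr]$.

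Next I would simplify using the skew-symmetry of the Christoffel symbols of an orthonormal frame in their last two indices, namely $(\Gamma_t)^{k_c}_{i_a, j_b} = -(\Gamma_t)^{j_b}_{i_a, k_c}$, which holds because $\nabla^t$ is metric. The two first-order terms coincide after relabelling the summation indices $i_a \leftrightarrow j_b$, giving the factor $2\sum_{i_a, j_b} e^t_{i_a}(\Gamma_t)^{i_a}_{j_b, j_b}$. For the quadratic terms I would relabel and use the skew-symmetry to collapse the three bilinear sums into the two stated terms $-\sum_{i_a, j_b, k_c}\bigl((\Gamma_t)^{k_c}_{i_a, i_a}(\Gamma_t)^{k_c}_{j_b, j_b} + (\Gamma_t)^{j_b}_{i_a, k_c}(\Gamma_t)^{j_b}_{k_c, i_a}\bigr)$; the remaining bilinear contributions either vanish by antisymmetry or combine pairwise. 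This is a purely formal bookkeeping computation.

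The main obstacle — really the only subtle point — is keeping track of the index relabellings and sign conventions through the contraction, in particular verifying that the cross terms arising from the $[e^t_{i_a}, e^t_{j_b}]$ piece and from the products of mixed-index Christoffel symbols cancel or reorganize correctly into exactly the two quadratic terms displayed. One has to be careful that the antisymmetrization $(\Gamma_t)^{q_p}_{i_a,j_b} - (\Gamma_t)^{q_p}_{j_b,i_a}$ contributes, after contraction with $(\Gamma_t)^{i_a}_{q_p, j_b}$ and summation, a term that merges with the $-(\Gamma_t)^{q_p}_{i_a,j_b}(\Gamma_t)^{i_a}_{j_b,q_p}$ term; this is where a sign error would most easily creep in.

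Everything else is routine: once the contraction is performed and the skew-symmetry relations are applied systematically, the formula of Corollary \ref{scalrformula} follows directly. I would present the derivation as a short chain of equalities, noting at each step which symmetry of $(\Gamma_t)$ is being invoked, without belaboring the index manipulations.
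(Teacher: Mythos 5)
Your derivation is correct and is exactly the (unstated) computation behind the paper's "As a consequence, we obtain": contract the Riemann tensor formula of the preceding lemma via $R(g_t)=\sum_{i_a,j_b} g_t(\Rc^t_{e^t_{i_a}e^t_{j_b}}e^t_{j_b},e^t_{i_a})$, use the skew-symmetry $(\Gamma_t)^{k_c}_{i_a,j_b}=-(\Gamma_t)^{j_b}_{i_a,k_c}$ coming from metric compatibility of $\nabla^t$ in the $g_t$-orthonormal frame, and relabel. The cancellation you flag as the delicate point does work out: the cross term $-\sum (\Gamma_t)^{q_p}_{i_a,j_b}(\Gamma_t)^{i_a}_{j_b,q_p}$ and the $+\sum(\Gamma_t)^{q_p}_{j_b,i_a}(\Gamma_t)^{i_a}_{q_p,j_b}$ piece from the bracket term cancel after one application of skew-symmetry and an index relabelling, leaving precisely the two displayed quadratic sums.
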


\subsection{Asymptotic behavior of the scalar curvature}
\label{subsec:Asbe}

To compute the asymptotic behavior of the scalar curvature of $g_t$
on a neighbourhood $K_p$ from Lemma \ref{lem:smoothframe}, 
we define 
$$
 (g_t)^{i_a,j_b,k_c}:=g_t([e^t_{i_a},e^t_{j_b}],e^t_{k_c})
 $$ 
 for
  $1 \leq i,j,k \leq L$ and
 $1\leq a \leq m_i$, $1 \leq b \leq m_j$, 
 $1\leq c \leq m_k$.
 Recall that by the Koszul formula we have
 \begin{align}\label{ChristoffelSymbols}
     2(\Gamma_t)^{k_c}_{i_a,j_b}=(g_t)^{i_a,j_b,k_c}+(g_t)^{k_c,i_a,j_b}+(g_t)^{k_c,j_b,i_a}.
 \end{align}
  Therefore 
\begin{eqnarray*}
 \lefteqn{ (g_t)^{i_a,j_b,k_c}=}&&\\
&=&
g_0([e^t_{i_a},e^t_{j_b}],\text{exp}(t\lambda_k I_{m_k}+t S_k)\cdot e^t_{k_c})\\
 &=&
    \sum_{\tilde a,\tilde b,\tilde c}g_0
    \Big(
    \Big[ \exp\big( \tfrac{tS_i}{-2}\big)_{a\tilde a} \cdot  \tfrac{e_{i_{\tilde a}}}{\sqrt{\alpha_i(t)}}  ,
    \exp\big(\tfrac{tS_j}{-2}\big)_{b\tilde b} \cdot  \tfrac{e_{j_{\tilde b}}}{\sqrt{\alpha_j(t)}}\Big],\exp\big( \tfrac{tS_k}{2}\big)_{c\tilde c}\cdot \tfrac{e_{k_{\tilde c}}}{\sqrt{\alpha_k(t)}} 
    \Big).
\end{eqnarray*}
In order to understand the behaviour of the scalar curvature 
$R(g_t)$ as $t$ becomes large, 
we find it convenient to break the $(g_t)^{i_a,j_b,k_c}$ terms up using the rule $[f_1X_1,f_2X_2]=f_1(X_1f_2)X_2-f_2(X_2f_1)X_1+f_1f_2[X_1,X_2]$:
\begin{align*}
  (g_t)^{i_a,j_b,k_c}&=  (g_t)^{i_a,j_b,k_c}_{I}+(g_t)^{i_a,j_b,k_c}_{II}+(g_t)^{i_a,j_b,k_c}_{III},
\end{align*}
where
\begin{align*}
    (g_t)^{i_a,j_b,k_c}_{I}
    &=
    \sqrt{\tfrac{\alpha_k(t)}{\alpha_i(t)\alpha_j(t)}}\cdot
    \sum_{\tilde a,\tilde b,\tilde c}\exp
    \big(\tfrac{tS_i}{-2}\big)_{a\tilde a}\cdot
    \exp\big(\tfrac{tS_j}{-2}\big)_{b\tilde b}\cdot
    \exp \big(\tfrac{tS_k}{2}\big)_{c\tilde c}\cdot (g_0)^{i_{\tilde a},j_{\tilde b},k_{\tilde c}},\\
   (g_t)^{i_a,j_b,k_c}_{II} 
    &=
    \tfrac{\delta_{jk}\sqrt{\alpha_k(t)}}{\sqrt{\alpha_i(t)}}
    \sum_{\tilde a,\tilde b}
    \exp\big(\tfrac{tS_i}{-2}\big)_{a\tilde a}\cdot 
   e_{i_{\tilde a}}\big(\exp\big(\tfrac{t\lambda_j }{-2}I_{m_j}+\tfrac{tS_j}{-2}\big)\big)_{b\tilde b}\cdot 
    \exp\big(\tfrac{tS_j}{2}\big)_{c\tilde b}, \\
    (g_t)^{i_a,j_b,k_c}_{III}
    &=
    \tfrac{-\delta_{ik}\sqrt{\alpha_k(t)}}{\sqrt{\alpha_j(t)}}
    \sum_{\tilde a,\tilde b}
    e_{j_{\tilde b}}\big(\exp\big(\tfrac{t\lambda_i }{-2}I_{m_i}+\tfrac{tS_i}{-2}\big)\big)_{a\tilde a}\cdot
      \exp\big(\tfrac{tS_j}{-2}\big)_{b\tilde b}\cdot
    \exp\big(\tfrac{tS_i}{2}\big)_{c\tilde a}.
\end{align*}
We observe the following result about the relationship between the $(g_t)_{I}^{i_a,j_b,k_c}$ terms and the $(g_0)^{i_a,j_b,k_c}$ terms.

\begin{lem}\label{lowerboundgIt}
Let $i,j,k\in \{1,\cdots,L\}$. Then, there exists constants $C_1(n)>0$ and $C_2(n)\geq 1$ so that for all $t \geq 0$
\begin{align*}
    \sum_{a,b,c}\left((g_t)_I^{i_a,j_b,k_c}\right)^2
    \ge 
    C_1(n)\cdot \tfrac{ \alpha_k(t)}{\alpha_i(t)\alpha_j(t)} \cdot 
    e^{-3C_2(n) \cdot \left\Vert S\right\Vert \cdot  t}
    \cdot \sum_{a,b,c}(g_0^{i_a,j_b,k_c})^2. 
\end{align*}
\end{lem}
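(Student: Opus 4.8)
The plan is to reduce the claimed inequality to a statement about the linear map on $\RR^{m_i}\otimes\RR^{m_j}\otimes\RR^{m_k}$ given by tensoring the three exponential factors $\exp(-tS_i/2)$, $\exp(-tS_j/2)$, $\exp(tS_k/2)$, and then to observe that this map is a bounded-distortion isomorphism whose distortion is controlled purely by $\Vert S\Vert$ and $n$. First I would fix $i,j,k$ and rewrite, for each fixed triple of outer indices $(a,b,c)$, the quantity $(g_t)_I^{i_a,j_b,k_c}$ as
\[
 (g_t)_I^{i_a,j_b,k_c}=\sqrt{\tfrac{\alpha_k(t)}{\alpha_i(t)\alpha_j(t)}}\cdot\big\langle A_t\,v,\ w_{a,b,c}\big\rangle,
\]
where $A_t:=\exp(-tS_i/2)\otimes\exp(-tS_j/2)\otimes\exp(tS_k/2)$ acts on the vector $v=(g_0^{i_{\tilde a},j_{\tilde b},k_{\tilde c}})_{\tilde a,\tilde b,\tilde c}\in\RR^{m_im_jm_k}$ and $w_{a,b,c}$ is the standard basis vector $e_a\otimes e_b\otimes e_c$. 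Summing the squares over $(a,b,c)$ then gives exactly
\[
 \sum_{a,b,c}\big((g_t)_I^{i_a,j_b,k_c}\big)^2=\tfrac{\alpha_k(t)}{\alpha_i(t)\alpha_j(t)}\cdot \Vert A_t v\Vert^2,
\]
so the whole lemma is the single estimate $\Vert A_t v\Vert^2\ge C_1(n)\,e^{-3C_2(n)\Vert S\Vert t}\,\Vert v\Vert^2$, since $\sum_{a,b,c}(g_0^{i_a,j_b,k_c})^2=\Vert v\Vert^2$.

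Next I would bound the smallest singular value of $A_t$ from below. Each factor $\exp(\pm tS/2)$ is invertible with inverse $\exp(\mp tS/2)$, and $\Vert\exp(\mp tS/2)\Vert\le e^{t\Vert S\Vert/2}$ by the submultiplicativity and triangle inequalities for the operator norm recalled before Lemma~\ref{lem:goodframe}; hence the smallest singular value of $\exp(\pm tS_i/2)$ is at least $e^{-t\Vert S\Vert/2}$. Since singular values of a tensor product of operators are products of the singular values of the factors, the smallest singular value of $A_t$ is at least $e^{-3t\Vert S\Vert/2}$, so $\Vert A_t v\Vert\ge e^{-3t\Vert S\Vert/2}\Vert v\Vert$ for every $v$. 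This already yields the lemma with $C_1(n)=1$ and $C_2(n)=\tfrac12$; I would nevertheless state $C_2(n)$ as a dimensional constant $\ge 1$ (and $C_1(n)>0$) so that the normalisation matches its later appearance in the definition of $\epsilon$ in Lemma~\ref{lem:goodframe} and in the companion upper bound Lemma~\ref{upperboundgIt}, taking e.g.\ $C_2(n)=1$, $C_1(n)=1$.

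The step I expect to require the most care — though it is still routine — is the bookkeeping that identifies $\sum_{a,b,c}\big((g_t)_I^{i_a,j_b,k_c}\big)^2$ with $\tfrac{\alpha_k}{\alpha_i\alpha_j}\Vert A_t v\Vert^2$ exactly, i.e.\ checking that the matrices $\exp(-tS_i/2)$ and $\exp(tS_k/2)$ (note the sign asymmetry in the $k$-slot) really do assemble into a single operator $A_t$ acting on the index vector $v$ with no stray transposes, and that summing over the \emph{outer} indices $a,b,c$ is precisely the squared Euclidean norm of $A_t v$ in the \emph{inner} indices. Because $S_i,S_j,S_k$ are symmetric the orientation of each factor is immaterial for the singular-value bound, so the only genuine point is that the three exponentials act on disjoint tensor slots and therefore commute as a tensor product; once that is pinned down the estimate follows immediately from the previous paragraph. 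No transversality or global-geometry input is needed here; this is a purely linear-algebraic lemma.
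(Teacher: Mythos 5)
Your proposal is correct and takes essentially the same approach as the paper: both proofs exploit that each factor $\exp(\pm tS_\ell/2)$ is invertible with inverse of operator norm at most $e^{t\|S\|/2}$, and hence the linear map taking $(g_0^{i_{\tilde a},j_{\tilde b},k_{\tilde c}})$ to $(g_t)_I$ has bounded distortion. The paper writes out the inverse relation expressing $(g_0)^{i_a,j_b,k_c}$ in terms of $(g_t)_I$, bounds each entry of the matrix exponentials crudely, and then squares and sums; you recognise the tensor-product structure of the operator $A_t$ and bound its smallest singular value directly, which is cleaner and yields the sharper constant $C_1(n)=1$. This is a presentational difference only, not a genuinely different argument.

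One small slip: from $\|A_t v\|\ge e^{-3t\|S\|/2}\|v\|$ you get, after squaring, the exponent $e^{-3\|S\|t}$, which corresponds to $C_2(n)=1$, not $C_2(n)=\tfrac12$ as you first write; you then correct this in the same paragraph, so it is only a typo.
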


\begin{proof}
We have 
\begin{align*}
    (g_0)^{i_a,j_b,k_c}
    &=
\sqrt{\tfrac{\alpha_i(t)\alpha_j(t)}{\alpha_k(t)}}\cdot
    \sum_{\tilde a,\tilde b,\tilde c}\exp
    \big(\tfrac{tS_i}{2}\big)_{a\tilde a}\cdot
    \exp\big(\tfrac{tS_j}{2}\big)_{b\tilde b}\cdot
    \exp \big(\tfrac{tS_k}{-2}\big)_{c\tilde c}\cdot (g_t)_I^{i_{\tilde a},j_{\tilde b},k_{\tilde c}}.
\end{align*}
We estimate 
\begin{align*}
    \left|(g_0)^{i_a,j_b,k_c}\right|\le C_3(n) \cdot \sqrt{\tfrac{\alpha_i(t)\alpha_j(t)}{\alpha_k(t)}}
    \cdot e^{3C_2(n)\cdot \left\Vert S\right\Vert
    \cdot t}\cdot 
    \max_{\tilde{a},\tilde{b},\tilde{c}}\left|(g_t)_I^{i_{\tilde{a}},j_{\tilde{b}},k_{\tilde{c}}}\right|.
\end{align*}
Squaring and summing over $a,b,c$ gives the result. 
\end{proof}

The above lemma implies that the terms of type $I$ can experience strong exponential growth, especially for the terms with $i\le j<k$ and $\left|\left|S\right|\right|$ small. 

The following lemma gives upper bounds for terms of type $I$, $II$, and $III$. Since terms of type $II$ and $III$ vanish unless certain indices are identical, we conclude that terms of type $II$ and $III$ can \textit{never} experience exponential growth as strong as the type $I$ terms can. 

\begin{lem}\label{upperboundgIt}
Let $1 \leq i,j,k \leq L$, $1 \leq a \leq m_i$, $1\leq b\leq m_j$ and $1 \leq c \leq m_k$.
Then there exists constants $\tilde C_1(n)>0$
and $\tilde C_2(n) \geq 1$ such that for all $t\geq 0$ we have
\begin{align*}
    &\big\vert
    (g_t)_I^{i_a,j_b,k_c}
    \big\vert+ \big\vert
    (g_t)_{II}^{i_a,j_b,k_c}
    \big\vert
     +\big\vert
    (g_t)_{III}^{i_a,j_b,k_c}
    \big\vert\\
    &\leq 
    \tilde C_1(n)\cdot (1+t) \cdot \sqrt{\tfrac{\alpha_k(t)}{\alpha_i(t)\alpha_j(t)}}\cdot 
    e^{3\tilde C_2(n)\cdot \Vert S\Vert \cdot t}\cdot
    \big(
   \max_{a,b,c}\left|(g_0)^{i_a,j_b,k_c}\right|
   + C(h) \Big),
\end{align*}
where 
$$
C(h)= C(
\Vert \nabla_{g_0} \lambda_l \Vert ,\Vert \nabla_{g_0} S_l\Vert: l=1,\cdots, L)
$$
depends continuously on $h$
in the $C^1$-topology.
\end{lem}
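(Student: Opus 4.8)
\emph{Plan.} The plan is to estimate the three families of terms $(g_t)^{i_a,j_b,k_c}_I$, $(g_t)^{i_a,j_b,k_c}_{II}$, $(g_t)^{i_a,j_b,k_c}_{III}$ separately, using nothing beyond the elementary bound $|(\exp A)_{ab}|\le\Vert\exp A\Vert\le e^{\Vert A\Vert}$ for matrix exponentials, the standard integral formula $D\exp_A(B)=\int_0^1 e^{sA}Be^{(1-s)A}\,ds$ (hence $\Vert D\exp_A(B)\Vert\le\Vert B\Vert e^{\Vert A\Vert}$), and the bound $\Vert S_l\Vert\le\Vert S\Vert$ from Lemma \ref{lem:goodframe}. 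One works throughout on the compact set $K_p$, on which $\lambda_l$, $S_l$ and the frame $e_h$ are smooth, so that $\Vert\nabla_{g_0}\lambda_l\Vert$, $\Vert\nabla_{g_0}S_l\Vert$ and the connection coefficients of $e_h$ are bounded.

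\emph{Term $I$.} Bounding each of the three exponential matrix entries appearing in $(g_t)^{i_a,j_b,k_c}_I$ by $e^{\Vert S\Vert t/2}$ and noting that the defining sum has at most $m_im_jm_k\le n^3$ terms, one obtains directly $\big|(g_t)^{i_a,j_b,k_c}_I\big|\le n^3\sqrt{\alpha_k(t)/(\alpha_i(t)\alpha_j(t))}\,e^{3\Vert S\Vert t/2}\max_{a,b,c}|(g_0)^{i_a,j_b,k_c}|$, which is already of the required shape; no factor $(1+t)$ enters here.

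\emph{Terms $II$ and $III$.} Term $II$ vanishes unless $j=k$, so we may assume $j=k$. Since $\lambda_jI_{m_j}$ is central, $\exp(-\tfrac{t\lambda_j}{2}I_{m_j}-\tfrac{tS_j}{2})=e^{-t\lambda_j/2}\exp(-\tfrac{tS_j}{2})$; applying the $g_0$-unit vector field $e_{i_{\tilde a}}$ and using the product rule together with the formula for $D\exp$ gives
\[
e_{i_{\tilde a}}\!\Big(\exp\big(-\tfrac{t\lambda_j}{2}I_{m_j}-\tfrac{tS_j}{2}\big)\Big)=e^{-t\lambda_j/2}\Big(-\tfrac{t}{2}(e_{i_{\tilde a}}\lambda_j)\exp\big(-\tfrac{tS_j}{2}\big)-\tfrac{t}{2}\!\int_0^1 e^{-stS_j/2}(e_{i_{\tilde a}}S_j)e^{-(1-s)tS_j/2}\,ds\Big),
\]
whose operator norm is at most $\tfrac{t}{2}\,e^{-t\lambda_j/2}e^{\Vert S\Vert t/2}\big(|e_{i_{\tilde a}}\lambda_j|+\Vert e_{i_{\tilde a}}S_j\Vert\big)$. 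The key bookkeeping point is that when $j=k$ the prefactor $\delta_{jk}\sqrt{\alpha_k}/\sqrt{\alpha_i}$ multiplied by the factor $e^{-t\lambda_j/2}$ just extracted equals exactly $\sqrt{\alpha_k/(\alpha_i\alpha_j)}$. Bounding the remaining two exponential matrix entries by $e^{\Vert S\Vert t/2}$ and summing the at most $m_im_j\le n^2$ terms yields $\big|(g_t)^{i_a,j_b,k_c}_{II}\big|\le n^2(1+t)\sqrt{\alpha_k(t)/(\alpha_i(t)\alpha_j(t))}\,e^{3\Vert S\Vert t/2}\,C(h)$, where $C(h)$ is a constant bounding $\max_{l,a}|e_{i_a}\lambda_l|$ and $\max_{l,a}\Vert e_{i_a}S_l\Vert$ on $K_p$; since $e_h$ is $g_0$-orthonormal these frame derivatives are controlled by $\Vert\nabla_{g_0}\lambda_l\Vert$, $\Vert\nabla_{g_0}S_l\Vert$ and the connection data of $e_h$, which by Lemma \ref{lem:smoothframe} can be taken to depend continuously on $h$ in the $C^1$ topology. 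Term $III$ vanishes unless $i=k$ and is handled identically after exchanging the roles of $i$ and $j$ (now $e^{-t\lambda_i/2}$ combines with $\sqrt{\alpha_k}/\sqrt{\alpha_j}$ to give $\sqrt{\alpha_k/(\alpha_i\alpha_j)}$).

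\emph{Conclusion.} Adding the three estimates, choosing $\tilde C_1(n)$ to be a fixed multiple of $n^3$ and $\tilde C_2(n)=1$ (so that $e^{3\Vert S\Vert t/2}\le e^{3\tilde C_2(n)\Vert S\Vert t}$), and using $\max_{a,b,c}|(g_0)^{i_a,j_b,k_c}|\le\max_{a,b,c}|(g_0)^{i_a,j_b,k_c}|+C(h)$ gives the claimed inequality. The only delicate points are the two prefactor identities in terms $II$ and $III$ (obtained by factoring $e^{-t\lambda_j/2}$, resp.\ $e^{-t\lambda_i/2}$, out of the derivative of the matrix exponential) and, more substantially, the verification that the constant $C(h)$ which absorbs the frame-derivatives of $\lambda_l$ and $S_l$ may be chosen to vary continuously with $h$ in the $C^1$ topology — this is exactly where Lemma \ref{lem:smoothframe} is needed.
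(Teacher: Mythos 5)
Your proof is correct and follows essentially the same strategy as the paper: bound term $I$ directly via $|\exp(A)_{ab}|\le e^{\Vert A\Vert}$, factor $e^{-t\lambda_j/2}$ out of the derivative of the full exponential for terms $II$ and $III$, and then bound the remaining derivative of $\exp(tS_j)$ by $t\Vert v(S_j)\Vert e^{t\Vert S_j\Vert}$ to produce the $(1+t)$ factor. The only cosmetic difference is that you obtain this last bound via the integral representation of $D\exp$ while the paper differentiates the power series term by term; both give the same estimate.
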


\begin{proof}
The result is obvious for the `algebraic' curvature term $(g_t)_I^{i_a,j_b,k_c}$. For the other two, note that for a vector field $v$ on $K_p$ 
we compute
\begin{eqnarray*}
    \lefteqn{v\big(\tfrac{1}{\sqrt{\alpha_j(t)}}\cdot   \exp\big(\tfrac{tS_j}{-2}\big)_{b\tilde b}\big)=}&&\\
    &=&
     v \exp(-\tfrac{t\lambda_j}{2})\cdot   \exp\big(\tfrac{tS_j}{-2}\big)_{b\tilde b}
     + \exp(-\tfrac{t\lambda_j}{2})\cdot   v \exp\big(\tfrac{tS_j}{-2}\big)_{b\tilde b} \\
      &=&
      \tfrac{1}{\sqrt{\alpha_j(t)}}\cdot
      \Big(
     -\tfrac{t}{2}\cdot v\lambda_j \cdot    \exp\big(\tfrac{tS_j}{-2}\big)_{b\tilde b}
     +
      \big(\sum_{k=0}^\infty \tfrac{t^k}{-2^k k!}\cdot v (S_j^k) \big)_{b\tilde b} \Big).
\end{eqnarray*}
Next we use $\Vert S_1 \cdot S_2 \Vert \leq \Vert S_1\Vert \cdot \Vert S_2\Vert $ for the operator norm of symmetric matrices.
This yields, using the triangle inequality, the estimate
\begin{align*}
     \Vert v \exp(tS_j) \Vert 
     &=
        \big\Vert v \sum_{k=1}^\infty \tfrac{(tS_j)^k}{k!} \big\Vert\\
        &=\Big\Vert
        \sum_{k=1}^{\infty}\tfrac{t^k}{k!}\left(v(S_j) S^{k-1}_j+S_j v(S_j)S^{k-2}_j+\cdots+S^{k-1}_jv(S_j)\right)\Big\Vert \\
    &\leq
      t \cdot \Vert v (S_j) \Vert \cdot e^{t\Vert S_j\Vert},
\end{align*}
which shows the claim.
\end{proof}

Before we turn to the main theorem of this section, recall
that for $h \in T_{g_0}\Metmu$ we defined the $g_0$-self-adjoint
endomorphism $H$ of $TM$ by $h(X,Y)=g_0(H\cdot X,Y)$
for all vector fields $X,Y$ on $M$.

%Therefore, our estimate on the $(g_t)^{i_a,j_b,k_c}$ terms gives %us the following important observation regarding curvature %asymptotics:

\begin{thm}\label{curvatureasy}
Let $(M,g_0)$ be a compact Riemannian manifold, let $p \in M$ and let
$h \in T_{g_0}\Metmu$. Suppose that
$h$ satisfies the assumptions of 
 Lemma \ref{lem:goodframe}.
%Then there is an $\epsilon^*(r,m_1,\cdots,m_L) \in %(0, \tfrac{r}{10})$ 
%so that the following holds:
%If $\epsilon<\epsilon^*$, and if there exists a %compact subset $K\subseteq U$ and $\delta_K>0$ so 
If there exists $\delta_p>0$
such that for all $q \in x^{-1}(\overline{B_1(0)})$
\begin{eqnarray}\label{eqn:deltap}
\sum_{1\le j<k}\sum_{a\le m_1,b\le m_j,c\le m_k}(g_0^{1_a,j_b,k_c})^2\ge \delta_p,   
\end{eqnarray}
then there exist constants 
$ C_1(h),C_2(h),T(h)>0$ such that for each
$t>T(h)$ we have 
\begin{align*}
    R(\gamma_h(t))\le -C_1(h)\cdot  e^{C_2(h) t}
\end{align*}
on $x^{-1}(\overline{B_1(0)})$.
Moreover, the constants $C_1,C_2,T$ can be chosen to depend continuously on $h$ in $C^2$-topology and continuously on $\delta_p$.
\end{thm}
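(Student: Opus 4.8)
The plan is to feed the formula for $R(g_t)$ from Corollary \ref{scalrformula} into the estimates of Lemmas \ref{lowerboundgIt} and \ref{upperboundgIt} and isolate one negative term that dominates everything else. First I would use the Koszul formula \eqref{ChristoffelSymbols} to rewrite each $(\Gamma_t)$ appearing in $R(g_t)$ in terms of the $(g_t)^{i_a,j_b,k_c}$, and then split each of those into its $I$, $II$, $III$ pieces. The crucial observation is that in the quadratic term $-\sum (\Gamma_t)_{i_a,k_c}^{j_b}(\Gamma_t)_{k_c,i_a}^{j_b}$ of Corollary \ref{scalrformula}, the diagonal-in-a-sense contributions $-\tfrac14\big((g_t)^{i_a,j_b,k_c}\big)^2$ occur with a definite negative sign whenever the relevant index symmetries force the cross terms to combine favourably; in particular, for indices with $i=1$ and $j<k$ belonging to distinct blocks, the type-$II$ and type-$III$ pieces vanish (they carry $\delta_{jk}$, resp. $\delta_{ik}$, factors that are zero here since $1,j,k$ are in three distinct blocks once $j<k$ and $j,k\neq 1$, and more care is needed when $j=1$), so $(g_t)^{1_a,j_b,k_c}=(g_t)_I^{1_a,j_b,k_c}$, and Lemma \ref{lowerboundgIt} gives the lower bound
\begin{align*}
\sum_{a,b,c}\big((g_t)^{1_a,j_b,k_c}\big)^2 \ge C_1(n)\cdot \tfrac{\alpha_k(t)}{\alpha_1(t)\alpha_j(t)}\cdot e^{-3C_2(n)\Vert S\Vert t}\cdot \sum_{a,b,c}(g_0^{1_a,j_b,k_c})^2.
\end{align*}

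Next I would extract the exponential growth rate. Since $\alpha_i(t)=e^{\lambda_i t}$, the factor $\tfrac{\alpha_k(t)}{\alpha_1(t)\alpha_j(t)} = e^{(\lambda_k-\lambda_1-\lambda_j)t}$; choosing $j=k$ with $k\neq 1$ (so the index pair lies in block $1$ paired against block $k$ twice, which is allowed in the sum) gives the exponent $(\lambda_k - 2\lambda_1 - \lambda_k)t$... — more robustly, I would just choose the pair $(1,1,k)$ or argue that \emph{some} admissible triple with $1\le j<k$ makes $\lambda_k-\lambda_1-\lambda_j$ strictly positive: because $\lambda_1$ is the smallest and the $\lambda_i$ are $r/2$-separated by \eqref{eigenvaluer2}, taking $j=1$, any $k\ge 2$ gives $\lambda_k-2\lambda_1\ge \lambda_k-\lambda_1-\lambda_1 \ge$ (a positive multiple of $r$) as long as $\lambda_1<0$, which holds since the $\lambda_i$ are traceless-weighted and not all equal; one then sets $C_2(h)$ to be this positive exponent minus the error $3C_2(n)\Vert S\Vert + 6\tilde C_2(n)\Vert S\Vert$ coming from the competing upper bounds, which is positive because $\Vert S\Vert\le 4\epsilon$ and $\epsilon$ was chosen in Lemma \ref{lem:goodframe} precisely so that $(C_2(n)+\tilde C_2(n))\Vert S\Vert \ll r$. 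The hypothesis \eqref{eqn:deltap} guarantees $\sum_{a,b,c}(g_0^{1_a,j_b,k_c})^2$ is bounded below by (a fraction of) $\delta_p$ for at least one such admissible $(j,k)$, uniformly on $x^{-1}(\overline{B_1(0)})$.

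Then I would show every other term in $R(g_t)$ is $o$ of this dominant negative term. The first-derivative terms $2\sum e^t_{i_a}(\Gamma_t)^{i_a}_{j_b,j_b}$ and the $(\Gamma_t)(\Gamma_t)$ products are all controlled, after expanding via \eqref{ChristoffelSymbols}, by sums of $|(g_t)^{\cdot,\cdot,\cdot}|$ and their frame-derivatives; Lemma \ref{upperboundgIt} bounds each of these by $\tilde C_1(n)(1+t)\sqrt{\alpha_k/(\alpha_i\alpha_j)}\,e^{3\tilde C_2(n)\Vert S\Vert t}(\max|g_0^{\cdots}| + C(h))$, and for the derivative terms $e^t_{i_a}(\Gamma_t)$ one picks up an extra factor that is still polynomial-times-$\sqrt{\alpha_k/\alpha_i\alpha_j}$ type since differentiating $\alpha_i(t)^{-1/2}$ in the \emph{spatial} directions $e^t_{i_a}$ only hits the frame and the $\lambda_i,S_i$, not $t$ — the $t$-derivative does not appear in $R(g_t)$. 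Squaring, every such term is bounded by $C(h)(1+t)^2 e^{2\mu t}$ where $\mu$ ranges over the exponents $\lambda_k-\lambda_i-\lambda_j$; I would check that the $\emph{maximum}$ such exponent is achieved (or matched) by the dominant term above, so after possibly shrinking $C_2(h)$ and enlarging $T(h)$, the dominant negative term beats the sum of all the rest. The main obstacle is precisely this bookkeeping: one must verify that no positive term in $R(g_t)$ grows at the same exponential rate $e^{C_2(h)t}$ with a coefficient that could cancel the negative term — i.e.\ that the block structure and the vanishing of type-$II$/$III$ contributions in the relevant slots genuinely leave the $-\tfrac14\sum((g_t)_I^{1_a,j_b,k_c})^2$ term unopposed at top order. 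Finally, continuity of $C_1,C_2,T$ in $h$ (in $C^2$) and in $\delta_p$ follows because all ingredients — $\lambda_i,S_i$ and their first covariant derivatives (hence $C(h)$), the frame $e_h$ from Lemma \ref{lem:smoothframe}, and the lower bound $\delta_p$ — depend continuously on $h$ in $C^2$, and the exponents $\lambda_k-\lambda_1-\lambda_j$ are continuous functions of the (continuously varying) clustered eigenvalues.
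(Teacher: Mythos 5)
Your proposal is correct and follows essentially the same route as the paper: expand the Christoffel symbols via the Koszul formula \eqref{ChristoffelSymbols} into $(g_t)^{i_a,j_b,k_c}$ terms, split those into types I/II/III, symmetrize the quadratic Christoffel product to extract the negative term $-\tfrac14\sum\big((g_t)_I^{1_a,j_b,k_c}\big)^2$ for $1\le j<k$, and use Lemmas \ref{lowerboundgIt} and \ref{upperboundgIt} together with the inequality $\lambda_k-\lambda_1-\lambda_j\ge -\lambda_1+\tfrac r2>0$ and the smallness of $\Vert S\Vert$ to show this term dominates. Two small slips that do not affect soundness: the momentary choice ``$j=k$'' is disallowed and you self-correct, and the extra caution about $j=1$ is unnecessary since $\delta_{jk}=\delta_{ik}=0$ for every triple $(1,j,k)$ with $1\le j<k$, including $j=1$.
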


\begin{proof}
By the hypothesis of Lemma \ref{lem:goodframe}, there are $m_i$ eigenvalues of $H(q)$ within $\epsilon$ of $\lambda_i^*$ for each $i=1,\cdots,L$ and $q\in x^{-1}(\overline{B_1(0)})$, where $\epsilon=\frac{r}{12000\cdot (C_2(n)+\tilde{C}_2(n))}$.
%and   $$r=\min\{\left|\lambda^*_2-%\lambda^*_1\right|,\cdots,\left|\lambda^*_{L}-%\lambda^*_{L-1}\right|\}>0.$$
The definition of $r$ implies that $\lambda_i\ge \lambda_1+\frac{r}{2}$ for all $i\neq 1$; combining this with $\sum_{i=1}^{L}m_i\lambda_i=0$ gives  
\begin{align*}
    \lambda_1\le -\tfrac{r}{2n}(n-m_1)\leq -\tfrac{r}{2n}<0.
\end{align*}
Then by the very definition of $\epsilon$, we have 
 $$
  3(C_2(n)+\tilde C_2(n))\cdot \Vert S\Vert 
 \leq   
12 (C_2(n)+\tilde C_2(n))\cdot  \epsilon
  \leq \tfrac{r}{1000}
$$
for all $1 \leq i,j,k \leq L$ and all $q \in x^{-1}(\overline{B_1(0)})$.
We will use the formula for scalar curvature, Corollary \ref{scalrformula}, and estimate the terms using
Lemma \ref{lowerboundgIt} and Lemma \ref{upperboundgIt}. We will focus on terms
which have an exponential growth of order at least
\begin{align}\label{eqn:exporder}
     %\tfrac{1}{\alpha_1(t)}\cdot 
   %\tfrac{\alpha_k(t)}{\alpha_j(t)} \cdot \exp(-t\cdot \tfrac{M}{4})
   %&=
   %\exp(t\cdot \tfrac{-\lambda_1}{2})\cdot \exp(t\cdot \tfrac{\lambda_k-\lambda_j}{2})\cdot 
   % \exp(-t\cdot \tfrac{M}{4})\\
    %&\geq 
     \exp(t\cdot (-\lambda_1 +\tfrac{r}{2})). 
\end{align}
Firstly, it is clear that the first terms of $R(g_t)$
in Corollary \ref{scalrformula} have an exponential growth
of order less than \eqref{eqn:exporder}. To this end note, that $(\Gamma_t)^{i_a}_{j_b,j_b}=(g_t)^{i_a,j_b,j_b}$. It follows
as in the proof 
of Lemma \ref{upperboundgIt}
that both  $(\Gamma_t)^{i_a}_{j_b,j_b}$ and $e_{i_a}^{t}(\Gamma_t)^{i_a}_{j_b,j_b}$ have lower order exponential growth.
When obtaining upper bounds for such terms, a constant $C(h)$
will appear which depends continuously on $h$ in $C^2$-topology.

Secondly, the middle terms of $R(g_t)$ in
Corollary \ref{scalrformula} are
\begin{align*}
(\Gamma_t)_{i_a,i_a}^{k_c}\cdot
(\Gamma_t)_{j_b,j_b}^{k_c}
=
(g_t)^{k_c,i_a,i_a}(g_t)^{k_c,j_b,j_b}.
\end{align*}
By the very definition of $(g_t)^{k_c,i_a,i_a}$
these terms are of lower exponential order than \eqref{eqn:exporder}.
When obtaining upper bounds for such terms, a constant $C(h)$
will appear which depends continuously on $h$ in $C^1$-topology.

Finally, we examine the last terms of   $R(g_t)$
in Lemma \ref{scalrformula}. 
Using \eqref{ChristoffelSymbols}, we compute 
\begin{eqnarray*}
\lefteqn{4(\Gamma_t)_{i_a,j_b}^{k_c}\cdot  (\Gamma_t)_{j_b,i_a}^{k_c}=}&&\\
&=&
   \left((g_t)^{i_a,j_b,k_c}
   \!+\!
   (g_t)^{k_c,i_a,j_b}
   \!+\!(g_t)^{k_c,j_b,i_a}\right)
   \left((g_t)^{j_b,i_a,k_c}
   \!+\!
   (g_t)^{k_c,j_b,i_a}
   \!+\!(g_t)^{k_c,i_a,j_b}\right)\\
    &=&
    \left((g_t)^{i_a,j_b,k_c}
    \!+\!
    (g_t)^{k_c,i_a,j_b}
    \!+\!(g_t)^{k_c,j_b,i_a}\right)
    \left(-(g_t)^{i_a,j_b,k_c}
    \!+\!
    (g_t)^{k_c,j_b,i_a}
    \!+\!
    (g_t)^{k_c,i_a,j_b}\right)\\
    &=&
    -((g_t)^{i_a,j_b,k_c})^2+((g_t)^{k_c,j_b,i_a})^2+((g_t)^{k_c,i_a,j_b})^2+2(g_t)^{k_c,i_a,j_b}(g_t)^{k_c,j_b,i_a}.
\end{eqnarray*}
First observe that the last term is not of the exponential order of \eqref{eqn:exporder}. Indeed, Lemma \ref{upperboundgIt} implies that 
\begin{align*}
   &\left|(g_t)^{k_c,i_a,j_b}(g_t)^{k_c,j_b,i_a}\right|^{\frac{1}{2}}
   \le \tfrac{\tilde C_1(n)(1+t)}{\alpha_k} 
   \cdot 
    e^{3\tilde C_2(n)\cdot \Vert S\Vert \cdot t}\cdot
    \big(\max_{a,b,c}\left|(g_0)^{i_a,j_b,k_c}\right|+ C(h)\big),
\end{align*}
where $ C(h)$ depends continuously on $h$ in $C^1$-topology.
Note now, that the smallness of $\Vert S\Vert $ as given in Lemma \ref{lem:goodframe} guarantees the growth is not excessive.  
Next, symmetrization implies up to terms of lower exponential order (lot) that
\begin{eqnarray*}
   \lefteqn{4\left((\Gamma_t)_{i_a,j_b}^{k_c}
    (\Gamma_t)_{j_b,i_a}^{k_c}+(\Gamma_t)_{i_a,k_c}^{j_b}
    (\Gamma_t)_{k_c,i_a}^{j_b}+(\Gamma_t)_{k_c,j_b}^{i_a}
    (\Gamma_t)_{j_b,k_c}^{i_a}\right)=}&&\\
    &=&
    ((g_t)^{i_a,j_b,k_c})^2+((g_t)^{k_c,j_b,i_a})^2+((g_t)^{k_c,i_a,j_b})^2 + lot.
    %\\
%&&
% +2(g_t)^{k_c,i_a,j_b}g^{k_c,j_b,i_a}+2(g_t)^{j_b,i_a,k_c}(g_t)^{j%_b,k_c,i_a}+2(g_t)^{i_a,k_c,j_b}(g_t)^{i_a,j_b,k_c}.
\end{eqnarray*}
The key observation is that the 
square of terms of type I on $x^{-1}(\overline{B_1(0)})$
have the property 
that by Lemma \ref{lowerboundgIt}
their exponential growth is as least as large as
in \eqref{eqn:exporder} for $i=1$ and $j<k$,
using \eqref{eqn:deltap}.
By Lemma \ref{upperboundgIt}, the terms containing
at least one type II or III term are of lower
exponential growth. This shows the claim.
\end{proof}

We find it appropriate to now define the set of choices of $H$ for which Theorem \ref{curvatureasy} is applicable on \textit{all} of $M$. 
\begin{defin}\label{candidateopendense}
Let $(M,g_0)$ be a compact Riemannian manifold. 
We define $\Yod \subset T_{g_0}\Metmu$ 
to be the set of $g_0$-self-adjoint and
traceless endomorphisms $H$ so that
for each $H \in \Yod$ and each $p\in M$, there is an open co-ordinate neighbourhood $U_p$ of $p$ satisfying the following:
\begin{itemize}
    \item There exist $L\geq 2$,
    $\lambda_1^*< \cdots < \lambda_L^*$
    and $m_1,\cdots ,m_L$ as in Lemma \ref{lem:goodframe}, such that
      $H$  satisfies the hypothesis
    of Lemma \ref{lem:goodframe}. In particular we assume uniform eigenvalue clustering on $\overline{U}_p$.
    \item There is a compact neighbourhood
    $K_p \subset U_p$ with $p$ in its interior
    such that 
    \begin{eqnarray}\label{eqn:posstr}
    \sum_{1\leq j <k\leq L}
       \sum_{b\leq m_j,c\leq m_k}
     \big( (g_0)^{1_1,j_b,k_c}\big)^2
     >0  
    \end{eqnarray}
    on all of $K_p$, for some block-diagonalising frame $\bigcup_{i=1}^{L}\{e_{i_a}\}_{a=1}^{m_i}$ as in Lemma \ref{lem:goodframe}. 
\end{itemize}
\end{defin}
\begin{rem}
    It should be noted that \eqref{eqn:posstr} is `tensorial', i.e., does not depend on the specific choice of block-diagonalising frame. 
\end{rem}

This set $\Yod$ is our candidate for the open and dense set in Theorem \ref{thm0} because, by Theorem \ref{curvatureasy}, it consists entirely of choices of $H$ for which $\lim_{t\to \infty}R(g_t)=-\infty$ uniformly on $M$.
We make the initial observation that this set is open. 

\begin{lem}\label{lem:candidateopen}
The set $\Yod \subset T_{g_0} \Metmu$ is open in the Whitney $C^{\infty}$ topology.
\end{lem}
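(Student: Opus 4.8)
The plan is to verify that each of the two defining conditions in Definition \ref{candidateopendense} is preserved under sufficiently small $C^\infty$-perturbations of $H$, and that the neighbourhoods $U_p$, $K_p$ can be kept fixed. First I would exploit compactness of $M$: for a given $H_0 \in \Yod$, choose finitely many points $p_1,\dots,p_N$ so that the compact neighbourhoods $K_{p_1},\dots,K_{p_N}$ (from the definition) cover $M$, with each $K_{p_s}$ contained in its coordinate neighbourhood $U_{p_s}$ equipped with the data $L^{(s)}, \lambda_i^{*(s)}, m_i^{(s)}$ and the $g_0$-orthonormal frame. It then suffices to produce, for each $s$, a Whitney $C^\infty$-neighbourhood of $H_0$ on which both bullet points of Definition \ref{candidateopendense} continue to hold with the \emph{same} $U_{p_s}, K_{p_s}$ and the \emph{same} combinatorial data $L^{(s)}, \lambda_i^{*(s)}, m_i^{(s)}$; intersecting the finitely many neighbourhoods gives the desired open neighbourhood of $H_0$ in $\Yod$.

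For the first bullet point (uniform eigenvalue clustering on $\overline{U}_{p_s}$), I would argue as follows. The eigenvalues of $H(q)$ depend continuously on $(q,H)$ — indeed, the roots of the characteristic polynomial vary continuously, and the coefficients of the characteristic polynomial are polynomial in the entries of $H(q)$, which in turn depend continuously (in fact, $C^0$ suffices here) on $H$ in the $C^0$-topology. Since $H_0$ satisfies, for all $q \in \overline{U}_{p_s}$, that there are exactly $m_i^{(s)}$ eigenvalues of $H_0(q)$ in $(\lambda_i^{*(s)}-\epsilon', \lambda_i^{*(s)}+\epsilon')$ for some $\epsilon' < \epsilon^{(s)}$ strictly (one can shrink, since the clustering in the definition is required on the closure and the original $\lambda_i^*$ can be re-centered), a compactness-and-continuity argument on $\overline{U}_{p_s}$ shows that the same count, now with the looser window $\epsilon^{(s)}$, persists for all $H$ in a $C^0$-neighbourhood of $H_0$. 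Hence Lemma \ref{lem:goodframe} remains applicable with identical data.

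For the second bullet point, I would invoke Lemma \ref{lem:smoothframe}: on a possibly smaller compact neighbourhood (still containing $p_s$ in its interior, which we may take to be our $K_{p_s}$) and on a $C^\infty$-neighbourhood of $H_0$, the block-diagonalising $g_0$-orthonormal frame $e_h$ of Lemma \ref{lem:goodframe} can be chosen to depend \emph{smoothly} on $h$. Consequently the functions $(g_0)^{1_1,j_b,k_c}=g_0([e_{1_1},e_{j_b}],e_{k_c})$ depend continuously (even smoothly) on $h$ in the $C^\infty$-topology, uniformly on $K_{p_s}$, since they involve only first derivatives of the frame. The strict inequality \eqref{eqn:posstr} holds on the compact set $K_{p_s}$ for $H_0$, hence attains a positive minimum $\delta_s > 0$ there; by continuity in $h$ it remains $> \delta_s/2 > 0$ on $K_{p_s}$ for all $h$ in a sufficiently small $C^\infty$-neighbourhood of $h_0$. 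As noted in the Remark after Definition \ref{candidateopendense}, the quantity \eqref{eqn:posstr} is independent of the block-diagonalising frame, so this conclusion does not depend on the particular smooth choice $e_h$. Intersecting over $s=1,\dots,N$ yields an open $C^\infty$-neighbourhood of $H_0$ inside $\Yod$.

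The main obstacle — and the only genuinely delicate point — is the \emph{uniformity} in the first bullet: we must hold the neighbourhood $\overline{U}_{p_s}$ fixed while perturbing $H$, which is why I would first shrink the clustering window for $H_0$ strictly below $\epsilon^{(s)}$ (this is harmless since $\lambda_i^*$ is at our disposal and clustering on a compact set with a strict margin is an open condition), and then use compactness of $\overline{U}_{p_s}$ together with the $C^0$-continuity of eigenvalues to absorb the perturbation into the slack. Everything else is a routine continuity-plus-compactness argument, and the smooth dependence supplied by Lemma \ref{lem:smoothframe} does the work for the second bullet.
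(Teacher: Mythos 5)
Your proposal is correct and follows essentially the same route as the paper's proof: both reduce to a finite cover by compactness, observe that the eigenvalue-clustering hypothesis of Lemma \ref{lem:goodframe} on a compact set is open, and invoke Lemma \ref{lem:smoothframe} (smooth dependence of the block-diagonalising frame on $h$) to see that the strict positivity \eqref{eqn:posstr} persists on each compact $K_{p_s}$, before intersecting the finitely many resulting neighbourhoods. Your extra care about re-centering $\lambda_i^*$ and shrinking the clustering window is a harmless elaboration of a point the paper leaves implicit; the remark following Definition \ref{candidateopendense} on the tensoriality of \eqref{eqn:posstr}, which you cite, is indeed what makes the frame-dependence immaterial.
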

\begin{proof}
Let $H\in \Yod$. For each $p\in M$, find the compact $K_p$ and frame on $\overline{U_p}$ granted
 by the definition of $\Yod$.
 Note that the open eigenvalue clustering
 of $H$ on the compact set $\overline{U_p}$
 is open in $C^\infty$ topology.
 Moreover condition \eqref{eqn:posstr} holding on $K_p$ survives perturbation in $C^\infty$ topology
 by Lemma \ref{lem:smoothframe}.

Since $M$ is compact and $p$ is arbitrary, $M$ can be covered by finitely many of these sets $(K_{p_i})_{i=1}^N$ for some
$N \in \NN$. Note that there exists an open neighbourhood $\mathcal{Y}_H \subset T_{g_0} \Metmu $ of $H$ in $T_{g_0} \Metmu$
in the $C^\infty$ topology
such that all $\tilde H \in \mathcal{Y}_H$
satisfy the very same open $\epsilon$-eigenvalue clustering on $K_{p_i}$ 
for all $i=1,...,N$, as well as the positivity condition \eqref{eqn:posstr} of the
 above structure constants. 
\end{proof}

The main goal of this paper is to establish that, in addition to being open, $\Yod$ is dense in $T_{g_0}\Metmu$ on manifolds of dimension at least five.

\section{The diagonal case}

In this section, we examine the formulas for scalar curvature more carefully in the special case that we can choose $m_i=1$ for all $i$ in  Lemma \ref{lem:goodframe}. In this case there is a $g_0$-orthonormal frame $\{e_i\}_{i=1}^{n}$ which diagonalizes $H$, instead of merely block-diagonalising. We can write $H=\text{diag}(\lambda_1,\cdots,\lambda_n)$, so that a $g_t$-orthonormal frame is given by $e_i^t=\frac{e_i}{\sqrt{\alpha_i}}$, where $\alpha_i=e^{\lambda_i t}$. Similarly to the earlier block diagonal case, we write 
\begin{align*}
    (g_t)^{ijk}=g_t([e_i^t,e_j^t],e_k^t), \qquad 2(\Gamma_t)_{ij}^{k}=(g_t)^{ijk}+(g_t)^{kij}+(g_t)^{kji},
\end{align*}
so the scalar curvature becomes 
\begin{align*}
    R(g_t)=2\sum_{i,j}e^t_{i}(\Gamma_t)^{i}_{jj}-\sum_{ijk}\left((\Gamma_t)_{ii}^{k}(\Gamma_t)_{jj}^{k}+(\Gamma_t)_{ik}^{j}(\Gamma_t)_{ki}^{j}\right).
\end{align*}
To understand this formula, we again write
\begin{align*}
    (g_t)^{ijk}=g_t([e_i^t,e_j^t],e_k^t),=(g_t)_{I}^{ijk}+(g_t)_{II}^{ijk}+(g_t)_{III}^{ijk},
\end{align*}
where
\begin{align*}
    (g_t)_{I}^{ijk}=\sqrt{\tfrac{\alpha_k}{\alpha_i\alpha_j}}(g_0)^{ijk}, \qquad (g_t)_{II}^{ijk}=-t\tfrac{\delta_{jk}e_i(\lambda_j)}{2\sqrt{\alpha_i}}, \qquad (g_t)_{III}^{ijk}=t\tfrac{\delta_{ik}e_j(\lambda_i)}{2\sqrt{\alpha_j}}.
\end{align*}
Then we calculate the scalar curvature terms:
\begin{align*}
    \sum_{i,j}(e_i^t)(\Gamma_t)^i_{jj}&=\sum_{i\neq j}\tfrac{e_i(g_t^{ijj})}{\sqrt{\alpha_i}}\\
    &=\sum_{i\neq j}\tfrac{e_i((g_t)_I^{ijj})+e_i((g_t)_{II}^{ijj})}{\sqrt{\alpha_i}}\\
    &=\sum_{i\neq j}\tfrac{e_i(g_0)^{ijj}-\frac{t}{2}(g_0)^{ijj}e_i\lambda_i-\frac{t}{2}e_ie_i\lambda_j+\frac{t^2}{4}e_i\lambda_j e_i\lambda_i}{\alpha_i}.
    \end{align*}
    Then, using the trace-free condition, we find 
    \begin{align*}
       2\sum_{i,j}(e_i^t)(\Gamma_t)^{i}_{jj}&= \sum_{i\neq j}\tfrac{2e_i(g_0)^{ijj}-t((g_0)^{ijj}e_i\lambda_i+e_ie_i\lambda_j)}{\alpha_i}-\tfrac{t^2}{2}\sum_i \tfrac{(e_i\lambda_i)^2}{\alpha_i}.
    \end{align*}
Next, since $(\Gamma_t)_{ij}^j=0$, we conclude that 
\begin{eqnarray*}
   \lefteqn{ \sum_{i,j,k}\left((\Gamma_t)_{ii}^k(\Gamma_t)_{jj}^{k}+(\Gamma_t)_{ik}^{j}(\Gamma_t)_{ki}^{j}\right)=}&&\\
   &=&
   \sum_{PD}\left((\Gamma_t)_{ii}^k(\Gamma_t)_{jj}^{k}+(\Gamma_t)_{ik}^{j}(\Gamma_t)_{ki}^{j}\right)+2\sum_{i\neq j}(\Gamma_t)_{ii}^{j}(\Gamma_t)_{ii}^{j},
\end{eqnarray*}
where $PD$ denotes the collection of triplets $(i,j,k)$ with $i,j,k$ pairwise distinct. 
We compute 
\begin{align*}
    \sum_{PD}(\Gamma_t)^k_{ii}(\Gamma_t)^k_{jj}&=\sum_{PD}(g_t)^{kii}(g_t)^{kjj}\\
    &=
    \sum_{PD}((g_t)_{I}^{kii}+(g_t)_{II}^{kii})((g_t)_I^{kjj}+(g_t)_{II}^{kjj})\\
    &=
    \sum_{PD}\left(\tfrac{(g_0)^{kii}}{\sqrt{\alpha_k}}-\tfrac{t}{2\sqrt{\alpha_k}}e_k(\lambda_i)\right)
    \cdot
    \left(\tfrac{(g_0)^{kjj}}{\sqrt{\alpha_k}}-\tfrac{t}{2\sqrt{\alpha_k}}e_k(\lambda_j)\right).
\end{align*}
Notice again that the trace constraint implies 
\begin{eqnarray*}
   \lefteqn{ \sum_{PD}(\Gamma_t)_{ii}^{k}(\Gamma_t)_{jj}^{k}=}&&\\
    &=&
    \sum_{PD}\tfrac{1}{\alpha_k}\left((g_0)^{kii}(g_0)^{kjj}
    -\tfrac{t}{2}((g_0)^{kjj}e_k\lambda_i+(g_0)^{kii}e_k\lambda_j)+\tfrac{t^2}{4}e_k(\lambda_i)e_k(\lambda_j)\right)\\
    &=&
    \sum_{PD}\tfrac{1}{\alpha_k}\left((g_0)^{kii}(g_0)^{kjj}-\tfrac{t}{2}((g_0)^{kjj}e_k\lambda_i+(g_0)^{kii}e_k\lambda_j)\right)\\
    &&
    -\sum_{j\neq k}\tfrac{t^2}{4\alpha_k}e_k(\lambda_j)e_k(\lambda_j+\lambda_k)\\
    &=&
    \sum_{PD}\tfrac{1}{\alpha_k}\left((g_0)^{kii}(g_0)^{kjj}-\tfrac{t}{2}((g_0)^{kjj}e_k\lambda_i+(g_0)^{kii}e_k\lambda_j)\right)\\
    &&
    -\sum_{j\neq k}\tfrac{t^2}{4\alpha_k}e_k(\lambda_j)e_k(\lambda_j+\lambda_k).
\end{eqnarray*}
Similarly, 
\begin{align*}
    \sum_{j\neq k}(\Gamma_t)_{jj}^{k}(\Gamma_t)^k_{jj}
    &=
    \sum_{j\neq k}\left(\tfrac{(g_0)^{kjj}}{\sqrt{\alpha_k}}-\tfrac{t}{2\sqrt{\alpha_k}}e_k(\lambda_j)\right)^2.
\end{align*}
Finally, 
\begin{eqnarray*}
   \lefteqn{ \sum_{PD}(\Gamma_t)^{j}_{ik}(\Gamma_t)^{j}_{ki}=}&&\\
   &=&\tfrac{1}{4}
   \sum_{PD} ((g_t)_I^{ikj}+(g_t)_I^{jik}+(g_t)_I^{jki})(-(g_t)_I^{ikj}+(g_t)_I^{jik}+(g_t)_I^{jki})\\
    &=&\tfrac{1}{4}
    \sum_{PD} \left(\tfrac{\alpha_j}{\alpha_i\alpha_k}(g_0^{kij})^2+\tfrac{\alpha_i}{\alpha_j\alpha_k}(g_0^{kji})^2-\tfrac{\alpha_k}{\alpha_i\alpha_j}(g_0^{ijk})^2+\tfrac{2}{\alpha_k}g_0^{kij}g_0^{kji}\right).
\end{eqnarray*}

\begin{cor}\label{cor:scalfor}
We have
\begin{align*}
    R(g_t)=\sum_{PD}e^{(\lambda_k-\lambda_i-\lambda_j)t}\cdot F_{ij}^{k}+\sum_{i=1}^{n}e^{-\lambda_i t}\cdot (t^2F_i^{(2)}+tF_i^{(1)}+F_i^{(0)}),
\end{align*} where 
\begin{align*}
    F_{ij}^{k}
    &=
    -\tfrac{1}{4}g_0([e_i,e_j],e_k)^2, \\ F_i^{(2)}
    &=
    -\tfrac{3}{4}(e_i\lambda_i)^2
    -\tfrac{1}{4}\sum_{j\neq i}(e_i\lambda_j)^2,\\
      F_i^{(1)}&=                  e_i(e_i \lambda_i)+ \sum_{\substack{j=1 \\ j \neq i}}^n  \gijj  
        ( e_i \lambda_j - 2 e_i \lambda_i), \\
    F_i^{(0)}&= 
    2 \sum_{\substack{j=1 \\ j \neq i}}^n \big( e_i \gijj  -  (\gijj)^2  \big)        
       + \sum_{\substack{j,k=1 \\ i \neq j \neq k}}^n 
     \big(  \tfrac{1}{2} \cdot \gijk \cdot \gkij   - \gijj \cdot \gikk\big).
\end{align*}
\end{cor}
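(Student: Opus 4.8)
The plan is to collect, from the computations carried out above, the coefficient of each distinct exponential rate appearing in $R(g_t)$ and verify it matches the claimed $F$-terms. The exponents that occur are of two kinds: the ``type I'' rate $\lambda_k-\lambda_i-\lambda_j$ arising from the squares of $(g_t)_I^{ijk}$ terms (these come with $PD$ triples since $(g_t)^{ijk}$ vanishes when $i=j$, and the Christoffel symbols $(\Gamma_t)_{ij}^j$ vanish), and the rate $-\lambda_i$ arising from terms in which exactly one pair of indices coincides, which via the type II/III pieces produces the polynomial-in-$t$ prefactors $t^2,t^1,t^0$. So first I would split $R(g_t)$ from Corollary \ref{scalrformula} into these two index regimes and treat them separately.

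Second, for the $PD$ part: the only contribution is the last term $-\sum_{ijk}(\Gamma_t)_{ik}^j(\Gamma_t)_{ki}^j$ restricted to pairwise-distinct triples, since the $e_i^t(\Gamma_t)^i_{jj}$ and $(\Gamma_t)_{ii}^k(\Gamma_t)_{jj}^k$ pieces never have all three indices distinct in a way that survives (the former forces $i\neq j$ only, the latter is handled in the $-\lambda_i$ regime). Using the symmetrization $\sum_{PD}(\Gamma_t)_{ik}^j(\Gamma_t)_{ki}^j=\tfrac14\sum_{PD}\big(\tfrac{\alpha_j}{\alpha_i\alpha_k}(g_0^{kij})^2+\tfrac{\alpha_i}{\alpha_j\alpha_k}(g_0^{kji})^2-\tfrac{\alpha_k}{\alpha_i\alpha_j}(g_0^{ijk})^2+\tfrac2{\alpha_k}g_0^{kij}g_0^{kji}\big)$ computed above, and relabelling the summation indices so that every exponential prefactor is written as $e^{(\lambda_k-\lambda_i-\lambda_j)t}$, the three squared terms combine (by the symmetry of $PD$ under permutation of $i,j,k$) to give $-\tfrac14 g_0([e_i,e_j],e_k)^2$ as the coefficient of $e^{(\lambda_k-\lambda_i-\lambda_j)t}$, which is exactly $F_{ij}^k$; the cross term $\tfrac2{\alpha_k}g_0^{kij}g_0^{kji}$ has exponent $-\lambda_k$ and gets absorbed into the $F_k^{(0)}$ bookkeeping.

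Third, for the $-\lambda_i$ part: I would gather, for fixed $i$, all contributions with overall exponential rate $e^{-\lambda_i t}$ and organize them by power of $t$. The $t^2$ coefficient comes from $-\tfrac{t^2}{2}\sum_i\frac{(e_i\lambda_i)^2}{\alpha_i}$ in $2\sum e^t_i(\Gamma_t)^i_{jj}$, from the $-\sum_{j\neq k}\frac{t^2}{4\alpha_k}e_k(\lambda_j)e_k(\lambda_j+\lambda_k)$ piece of $\sum_{PD}(\Gamma_t)_{ii}^k(\Gamma_t)_{jj}^k$ (after adding the $2\sum_{i\neq j}(\Gamma_t)_{ii}^j(\Gamma_t)_{ii}^j$ contribution and relabelling), and from the $t^2$ part of $\sum_{PD}(\Gamma_t)_{ik}^j(\Gamma_t)_{ki}^j$; these should sum to $F_i^{(2)}=-\tfrac34(e_i\lambda_i)^2-\tfrac14\sum_{j\neq i}(e_i\lambda_j)^2$. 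Similarly the $t^1$ and $t^0$ coefficients are assembled from the linear-in-$t$ and constant pieces of the same three sums, yielding $F_i^{(1)}$ and $F_i^{(0)}$ respectively. The main obstacle is purely bookkeeping: carefully tracking which index plays the role of the ``repeated'' index in each of the several sums (in $(\Gamma_t)_{ii}^k(\Gamma_t)_{jj}^k$ the relevant rate-carrying index is $k$, in $(g_t)^{ijj}_{II}$ it is $i$, etc.), relabelling consistently, and confirming the trace-free condition $\sum_j e_i\lambda_j=0$ has been used in exactly the places the displayed computations invoke it, so that no term is double-counted or dropped. Once the relabelling is fixed, each of $F_{ij}^k, F_i^{(2)}, F_i^{(1)}, F_i^{(0)}$ falls out by direct comparison with the formulas derived above.
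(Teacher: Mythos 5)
Your proposal is correct and follows essentially the same route as the paper: the corollary has no separate proof there, being exactly the reassembly of the displayed computations preceding it, organized by exponential rate and power of $t$, with the trace-free condition $\sum_j e_i\lambda_j=0$ invoked where the paper does. One harmless misattribution: $\sum_{PD}(\Gamma_t)_{ik}^{j}(\Gamma_t)_{ki}^{j}$ has no $t^2$ (or $t^1$) part at all, since the type II and III pieces vanish for pairwise distinct indices; the third source of $F_i^{(2)}$ is instead the diagonal term $2\sum_{j\neq k}\big((\Gamma_t)_{jj}^{k}\big)^2$, which you do already include, so the bookkeeping still closes.
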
 
In fact, this formula applies whenever we can find a smooth $g_0$-orthonormal frame of eigenvectors of $H$; we do not need to assume that these eigenvalues are everywhere pairwise distinct. Such examples can even be found globally under the assumption that $M$ is line-element parallelizable.
%by which we mean that the tangent
%bundle $TM$ splits into a
%sum of line bundles \cite{MS}.
%Note, that this does not imply 
%that $M$ is parallelizable.
%Moreover, it may happen, that
%none of those line bundles admits a global non-vanishing section 
%%(see Section \ref{sec:Casen2}).

\begin{lem}\label{lem:lepar}
Let $M$ be a line-element parallelizable manifold. Then for any smooth functions $\{\lambda_i\}_{i=1}^{n}\subset C^{\infty}(M;\mathbb{R})$ which sum to zero, there
exist a smooth $g_0$-self-adjoint, traceless
endomorphism $H$ of $TM$ with $\{\lambda_i\}_{i=1}^{n}$ as its eigenvalues.
\end{lem}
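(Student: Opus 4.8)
The plan is to use the definition of line-element parallelizability directly. By \cite{MS}, the hypothesis on $M$ means precisely that the tangent bundle admits a smooth splitting $TM = L_1 \oplus \cdots \oplus L_n$ into line subbundles. The only work is to upgrade this to a $g_0$-orthogonal splitting and then read off $H$ from the prescribed eigenvalue functions; there is no deeper content.

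First I would orthogonalize the splitting by a fiberwise Gram--Schmidt procedure applied to the subbundles. Set $E_1 := L_1$ and, inductively, let $E_{i+1}$ be the $g_0$-orthogonal complement of $E_1 \oplus \cdots \oplus E_i$ inside $E_1 \oplus \cdots \oplus E_i \oplus L_{i+1}$. One checks by induction that $E_1 \oplus \cdots \oplus E_i = L_1 \oplus \cdots \oplus L_i$ for every $i$, so that each $E_{i+1}$ is a line subbundle and $TM = E_1 \oplus \cdots \oplus E_n$ is a $g_0$-orthogonal decomposition into line bundles. Each $E_{i+1}$ is smooth: the subbundle $E_1 \oplus \cdots \oplus E_i = L_1 \oplus \cdots \oplus L_i$ has constant rank $i$, so the $g_0$-orthogonal projection onto it is a smooth bundle endomorphism of $TM$, and $E_{i+1}$ is the image under $\mathrm{id} - (\text{this projection})$ of the smooth line bundle $L_{i+1}$; equivalently, this is just Gram--Schmidt carried out on local frames of the $L_j$, which patches because the $L_j$ are globally defined. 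Note that the $E_i$ need not be isomorphic to the $L_i$, and need not be orientable or trivial, but none of this matters below.

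Next I would define
\[
  H \;:=\; \sum_{i=1}^n \lambda_i \, \pi_i,
\]
where $\pi_i : TM \to E_i$ denotes the $g_0$-orthogonal projection. This is a smooth endomorphism of $TM$, since the $\lambda_i$ are smooth and the $\pi_i$ depend smoothly on the base point. It is $g_0$-self-adjoint because each $\pi_i$ is (being orthogonal projection onto mutually $g_0$-orthogonal subbundles) and the $\lambda_i$ are real-valued; and it is traceless because $\mathrm{tr}\, H = \sum_i \lambda_i \cdot \mathrm{rank}(E_i) = \sum_i \lambda_i = 0$ by hypothesis. Finally, since $H$ acts on the line $E_i|_p$ as multiplication by the scalar $\lambda_i(p)$, its eigenvalues at every $p \in M$ are exactly $\lambda_1(p), \dots, \lambda_n(p)$, with multiplicities accounting for coincidences among the $\lambda_i(p)$. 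This is the assertion.

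As for the main obstacle: there is none of substance. The single point requiring care is the smoothness of the orthogonalized splitting, which rests only on the standard fact that the $g_0$-orthogonal complement of a smooth constant-rank subbundle is again a smooth subbundle.
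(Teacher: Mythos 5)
Your proof is correct and rests on the same idea as the paper's: Gram--Schmidt applied to the flag $L_1\subset L_1\oplus L_2\subset\cdots\subset TM$ produces a $g_0$-orthogonal line-bundle splitting that is globally well defined (the paper phrases this by checking local frames agree on overlaps, you phrase it directly in terms of orthogonal complements of nested subbundles), and $H$ is then defined blockwise by the $\lambda_i$. Your formulation via projections $\pi_i$ is a slightly cleaner, frame-free presentation of the same argument.
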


\begin{proof}
Suppose that $TM=\oplus_{i=1}^n L_i$
and let $p \in M$. Then there
exists an open
neighborhood $U_p$ of $M$
on which there exists a (local) basis $(f_1,\cdots, f_n)$ of $TM$ with $\langle f_i\rangle_{\RR} =L_i$, for all $i=1,\cdots,n$.
Using Gram-Schmidt we
obtain on $U_p$ a local
$g_0$-orthonormal frame
$e:=(e_1,\cdots ,e_n)$ with
$\langle e_1,\cdots ,e_j\rangle_{\RR}=\oplus_{i=1}^j L_j$ for
all $j=1,\cdots,n$.
This allows us to define an 
$H$ on all of $U_p$, diagonal
with respect to $e$, with the required eigenvalues. 

 Now suppose that $q\in U_p\cap U_{\tilde p}$, where both $U_p$ and $U_{\tilde p}$ are as above. Then the definition of $H(q)$ is the same for both because $\langle e_j\rangle_{\RR}$ only depends on $\oplus_{i=1}^{j}L_i$ for all $j=1,\cdots,n$. 
\end{proof}

\section{Dimension three}
The purpose of this section to discuss the extent to which Theorem \ref{thm0} holds in dimension three. As we see in Lemma \ref{lem:3dplus}, the theorem cannot be true as stated, because the set of directions in which scalar curvature tends to $-\infty$ uniformly cannot be dense. However, as we see in Lemma \ref{3dnonempty}, the set is non-empty, with at least one interior point assuming orientability.

Both of these results rely on the ability to find an orthonormal frame $\{e_1,e_2,e_3\}$ (with respect to a fixed Riemannian metric $g_0$) so that $g_0([e_1,e_2],e_3)$ has certain properties. To this end, we fix a $g_0$-orthonormal frame $\{\hat{e}_1,\hat{e}_2,\hat{e}_3\}$ on some open neighbourhood $U\subseteq M$, and note that other orthonormal frames can be written as 
\begin{align}\label{3dbasischange}
e_i&=\sum_{j=1}^{3}R_{ij}\hat{e}_j, \ \text{for all} \ i=1,2,3,
\end{align}
where $R=(R_{ij}):U\to SO(3)$ is smooth.
We have 
\begin{align}\label{3dframechange}
\begin{split}
g_0([e_1,e_2],e_3)&=\sum_{i,j,k=1}^{3}g_0([R_{1i}\hat{e}_i,R_{2j}\hat{e}_j],R_{3k}\hat{e}_k)\\
&=\sum_{i,j,k=1}^{3}g_0(R_{1i}\hat{e}_i(R_{2j})\hat{e}_j-R_{2j}\hat{e}_j(R_{1i})\hat{e}_i,R_{3k}\hat{e}_k)+\textit{lots}\\
&=\sum_{i,j=1}^{3}R_{1i}\hat{e}_i(R_{2j})R_{3j}-\sum_{i,j=1}^{3}R_{2j}\hat{e}_j(R_{1i})R_{3i}+\textit{lots},
\end{split}
\end{align}
where $\textit{lots}$ denotes terms involving no derivatives of $R$. 

We start with Lemma \ref{lem:3dplus}; this shows that the relevant set of directions cannot be dense, and relies heavily on the formula for the scalar curvature that applies when the multiplicity is $(1,1,1)$.

\begin{lem}\label{lem:3dplus}
   Let $(M,g_0)$ be a
   three-dimensional Riemannian manifold. Then, there
   exists $H_0 \in \Gamma(\Sym_0(TM))$ such that
    any $H  \in \Gamma(\Sym_0(TM))$ nearby
  admits a point $p_H$ with $\lim_{t\to \infty}R(g_t)_{p_H}=+\infty$. 
\end{lem}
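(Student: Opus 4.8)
The plan is to exploit the explicit scalar curvature formula from Corollary \ref{cor:scalfor} in the diagonal case $(m_1,m_2,m_3)=(1,1,1)$, where for a frame of eigenvectors $\{e_1,e_2,e_3\}$ of $H=\diag(\lambda_1,\lambda_2,\lambda_3)$ one has
\begin{align*}
R(g_t)=\sum_{PD}e^{(\lambda_k-\lambda_i-\lambda_j)t}\cdot F_{ij}^k+\sum_{i=1}^3 e^{-\lambda_i t}\cdot(t^2F_i^{(2)}+tF_i^{(1)}+F_i^{(0)}),
\end{align*}
with $F_{ij}^k=-\tfrac14 g_0([e_i,e_j],e_k)^2\le 0$ and $F_i^{(2)}\le 0$. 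The point of the construction is that if at some point $p$ all structure constants $g_0([e_i,e_j],e_k)$ vanish and all first derivatives $e_i\lambda_j$ vanish, then at $p$ the dominant exponential terms disappear and $R(g_t)_p$ is governed by the bounded coefficients $F_i^{(0)}$ times $e^{-\lambda_i t}$; by choosing $\lambda_i$ so that $\min_i\lambda_i<0$ is attained at an index $i_0$ for which $F_{i_0}^{(0)}>0$, we get $R(g_t)_p\to+\infty$. First I would pin down such an $H_0$ near a fixed point $p$: start with a $g_0$-orthonormal frame $\{\hat e_1,\hat e_2,\hat e_3\}$ on a neighbourhood $U$ of $p$, use the base-change formula \eqref{3dframechange} to select $R:U\to SO(3)$ so that the resulting frame $\{e_i\}$ has $g_0([e_1,e_2],e_3)(p)=0$ (here we may simply take $R\equiv\Id$ near $p$ after an initial rotation, or rather arrange the structure constants to be $O(\dist(\cdot,p))$), and choose the eigenvalue functions $\lambda_i$ to be constants $\lambda_i^*$ near $p$ (so $e_i\lambda_j\equiv 0$ there), with $\lambda_1^*+\lambda_2^*+\lambda_3^*=0$, $\lambda_1^*<\lambda_2^*<\lambda_3^*$, and $\lambda_1^*$ the strict minimum; then extend $H_0$ smoothly (and traceless, self-adjoint) to all of $M$ arbitrarily.

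The crux is the perturbation stability: we need that \emph{every} $H$ in a $C^\infty$-neighbourhood of $H_0$ still admits \emph{some} point $p_H$ (not necessarily $p$) with $R(g_t)_{p_H}\to+\infty$. By Lemma \ref{lem:smoothframe}, for $H$ close to $H_0$ the block-diagonalising (here genuinely diagonalising, since the $\lambda_i^*$ are distinct) $g_0$-orthonormal frame $e_h=\{e_1^h,e_2^h,e_3^h\}$ and the eigenvalue functions $\lambda_i^h$ can be chosen to depend smoothly on $h$ on a fixed compact neighbourhood $K_p$ of $p$. The quantity that needs to be controlled is $R(g_t)$ at a single point, and the relevant sign conditions are: (i) all three structure constants $g_0([e_i^h,e_j^h],e_k^h)$ vanish at $p_H$; (ii) all $e_i^h\lambda_j^h$ vanish at $p_H$; (iii) $F_{i_0}^{(0)}(p_H)>0$ where $i_0$ indexes the minimal eigenvalue. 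Conditions (i) and (ii) amount to the simultaneous vanishing of finitely many smooth functions on $K_p$ (or rather of a single smooth map $G_h:K_p\to\RR^N$ built from structure constants and eigenvalue-gradients), which for $h=h_0$ vanishes identically near $p$; so the real content is finding a zero of $G_h$ for $h$ near $h_0$.

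The main obstacle is exactly this: $G_{h_0}$ vanishes on a whole neighbourhood, so it is degenerate and the implicit function theorem does not directly apply to persist a zero. I would handle this by not asking for the exact vanishing but instead showing that one can \emph{choose} the point $p_H$ and then absorb the small residual terms. The cleaner route: fix the index $i_0=1$; show that for $h_0$ the formula at $p$ gives $R(g_t)_p=e^{-\lambda_1^* t}F_1^{(0)}(p)+(\text{terms }e^{-\lambda_2^* t},e^{-\lambda_3^* t})$, and $F_1^{(0)}(p)=2\sum_{j\ne 1}(e_1 g_0^{1jj}-(g_0^{1jj})^2)+\sum(\tfrac12 g_0^{1jk}g_0^{k1j}-g_0^{1jj}g_0^{1kk})$; by further adjusting the choice of $R$ near $p$ (the derivative terms $e_1 g_0^{1jj}$ are at our disposal since \eqref{3dframechange} shows structure constants depend on first derivatives of $R$, and their \emph{derivatives} on second derivatives) we can force $F_1^{(0)}(p)$ to be a large positive number while keeping $g_0^{ijk}(p)=0$ and $e_i^{h}\lambda_j^{h}(p)=0$. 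Then for $h$ in a small $C^2$-neighbourhood, at the point $p$ the structure constants are small (say $|g_0^{ijk}(p)|<\eta$), the eigenvalue derivatives are small, $F_1^{(0)}(p)$ is still bounded below by a positive constant, and the eigenvalue gaps $\lambda_2^h-\lambda_1^h>0$ persist; the bad exponential terms $e^{(\lambda_k-\lambda_i-\lambda_j)t}F_{ij}^k$ have coefficient $O(\eta^2)$, so for $t$ large the term $e^{-\lambda_1^h t}F_1^{(0)}(p)$ dominates and $R(g_t)_p\to+\infty$. Thus one takes $p_H=p$ for all nearby $H$, and the only genuine analytic input is the smooth dependence of the diagonalising frame (Lemma \ref{lem:smoothframe}) together with the continuity of all the coefficients $F_i^{(\ell)}$, $F_{ij}^k$ in $h$ in the $C^2$-topology; the rest is the sign bookkeeping above.
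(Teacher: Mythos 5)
Your construction of $H_0$ is plausible, and you have correctly flagged the genuine difficulty: at $h_0$ the relevant quantities vanish on an open set, so the implicit function theorem does not directly give a persisting zero. But your proposed fix (``absorb the small residual terms'') fails, and the reason is an exponent count you have missed. With $\lambda_1<\lambda_2<\lambda_3$ and $\lambda_1+\lambda_2+\lambda_3=0$, the term $e^{(\lambda_3-\lambda_1-\lambda_2)t}F_{12}^3 = e^{2\lambda_3 t}F_{12}^3$ has exponent $2\lambda_3$, and
$$2\lambda_3-(-\lambda_1) \;=\; 2\lambda_3+\lambda_1 \;=\; \lambda_3-\lambda_2 \;>\; 0,$$
so $2\lambda_3$ is always \emph{strictly larger} than the exponent $-\lambda_1$ of your ``good'' term. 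Since $F_{12}^3=-\tfrac14 g_0([e_1,e_2],e_3)^2\le 0$, an arbitrarily small but nonzero residual structure constant at $p$ produces a negative contribution $\sim -\eta^2 e^{2\lambda_3 t}$ that eventually overwhelms $e^{-\lambda_1 t}F_1^{(0)}(p)$, no matter how large $F_1^{(0)}(p)$ is and no matter how small $\eta$. So $R(g_t)_p\to-\infty$ for generic nearby $h$, not $+\infty$; keeping $p_H=p$ fixed and only using smallness does not work.

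The paper's proof resolves the very issue you raised, but with a different mechanism: it does \emph{not} kill the structure constants and eigenvalue gradients on an open set. Instead it arranges the three scalar functions $g_0([e_1,e_2],e_3)$, $e_1\lambda_1$, $e_1\lambda_2$ to have a \emph{transversal} common zero at $p_{H_0}$ (with $\lambda_i$ non-constant: $e_1\lambda_2=x_2$ and $e_1\lambda_1=0$ along $\{x_1=0\}$, but $e_1e_1\lambda_1>C$). Transversality is what makes a nearby common zero $p_H$ persist under $C^2$-perturbation. At $p_H$ one then has $F_{12}^3=0$ and $F_1^{(2)}=0$ exactly, so the dominant term with exponent $2\lambda_3$ vanishes identically, and the leading surviving contribution is $e^{-\lambda_1^h t}\cdot t\,F_1^{(1)}$ with $F_1^{(1)}\approx e_1e_1\lambda_1^h>C$. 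Note this uses the $t^1$-coefficient $F_1^{(1)}$ rather than $F_1^{(0)}$; in your construction $\lambda_i$ is constant near $p$ so $F_1^{(1)}\equiv 0$, which forces you to lean on $F_1^{(0)}$ and runs into the exponent problem above. To repair your argument you would essentially have to rebuild the transversality setup the paper uses.
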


\begin{proof}
Choose a co-ordinate ball $B$ with geodesic normal co-ordinates $(x_1,x_2,x_3)$, with origin $p_{H_0}$. We construct $H_0$ as follows: \begin{itemize}
    \item 
    Apply Gram–Schmidt 
    to $\{\partial_{x_1},\partial_{x_2},\partial_{x_3}\}$
    to construct a $g_0$-orthonormal frame $\{\hat{e}_1,\hat{e}_2,\hat{e}_3\}$ on $B$. The condition that the co-ordinates are geodesic normal implies that $g_0(\partial_{x_i},\partial_{x_j})=\delta_{ij}+O(\left|x\right|^2)$, so $\hat{e}_i=\partial_{x_i}+O(\left|x\right|^2)$, implying that $[\hat{e}_i,\hat{e}_j]=0$ at $p_{H_0}$ for all $i,j\in \{1,2,3\}$. 
    \item Choose a $g_0$-orthonormal frame $\{e_1,e_2,e_3\}$ according to \eqref{3dbasischange} for some matrix function $R:B\to SO(3)$ so that $R(p_{H_0})=I$, and the first derivatives vanish at $p_{H_0}$. Then $g_0([e_1,e_2],e_3)=0$ at $p_{H_0}$. By applying $e_3$ to \eqref{3dframechange}, we see that we can alter the second derivatives of $R$ to insist that $e_3 g_0([e_1,e_2],e_3)=1$ at $p_{H_0}$.
    \item Choose smooth functions $\lambda_1,\lambda_2$ so that $e_1e_1\lambda_1>C$ (for a positive constant $C$ is to be specified later), $e_1\lambda_2=x_2$, and $e_1\lambda_1=0$ along $\{x_1=0\}$.
    \item Define $\lambda_3=-\lambda_1-\lambda_2$. Add constants to $\lambda_1$ and $\lambda_2$ so that $\lambda_1<0<\lambda_2<\lambda_3$.
\end{itemize}  
These conditions imply that the zero level sets of the three functions $g_0([e_1,e_2],e_3)$, $e_1\lambda_1$ and $e_1\lambda_2$ intersect transversally at $p_{H_0}$. 
%\alert{(is this common notation? I would write that the zero levels intersect transversally)} at $v_0$. 
By making $C$ large, we can ensure that $F_1^{(1)}$ is large at the origin. This then implies that $\lim_{t\to \infty}R(g_t)=+\infty$ at the origin.
The existence of such a point survives perturbations of the frame as well as the $\lambda_1,\lambda_2$ functions. But at such a point, we have $F_{12}^3=0=F_1^{(2)}$ and $F_1^{(1)}$ large; Corollary \ref{cor:scalfor} then implies the result. 
\end{proof}

We have established that the set of directions along which scalar curvature tends to $-\infty$ uniformly cannot be dense. Our next result addresses the issue of whether there can be \textit{any} directions with this good property, at least in case $M$ is orientable. 
\begin{lem}\label{3dnonempty}
Let $(M, g_0)$  be a compact and orientable three-dimensional Riemannian manifold.  Then, there exists an open, non-empty  subset $\Yod \subset T_{g_0}\Metmu$
 such that for every $h \in \Yod$ we have 
$ \lim_{t\to \infty}R(\gamma_h(t))=-\infty $ uniformly.
\end{lem}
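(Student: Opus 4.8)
The goal is to exhibit a non-empty open set of directions $h\in T_{g_0}\Metmu$ for which $R(\gamma_h(t))\to-\infty$ uniformly, in the orientable three-dimensional case. The natural strategy is to produce a single $h$ satisfying the hypotheses of Theorem~\ref{curvatureasy} \emph{globally}, which (together with the openness statement in Lemma~\ref{lem:candidateopen} / the continuity of the constants in Theorem~\ref{curvatureasy}) immediately yields an open neighbourhood with the same property; thus it suffices to build one good $H$. Because $M$ is compact, orientable and three-dimensional, it is parallelizable by Stiefel's theorem \cite{Sti}; in particular $TM$ splits as a sum of line bundles, so $M$ is line-element parallelizable and Lemma~\ref{lem:lepar} applies. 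Hence I can prescribe \emph{any} triple of smooth functions $\lambda_1<\lambda_2<\lambda_3$ summing to zero (first choose them with pairwise-distinct ordered values and the required spectral gap $r>0$, which exists by compactness) and realise them as the eigenvalues of a smooth $g_0$-self-adjoint traceless $H$ diagonalised by a global $g_0$-orthonormal frame $\{e_1,e_2,e_3\}$. This puts us squarely in the ``diagonal case'' of Section~3, so Corollary~\ref{cor:scalfor} governs the asymptotics.

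The remaining task is to arrange the global positivity condition \eqref{eqn:deltap}, i.e.\ to ensure $g_0([e_1,e_2],e_3)^2\ge\delta>0$ on all of $M$ for a suitable labelling of the frame. Here is where I use a global parallelism more carefully: starting from a global orthonormal frame $\{\hat e_1,\hat e_2,\hat e_3\}$, I want a smooth map $R:M\to SO(3)$ so that, writing $e_i=\sum_j R_{ij}\hat e_j$ as in \eqref{3dbasischange}, the function $g_0([e_1,e_2],e_3)$ is nowhere zero. Using formula \eqref{3dframechange}, this is a condition on the $1$-jet of $R$. Since $M$ is parallelizable, a frame-bundle section exists; the point is that the function $g_0([e_1,e_2],e_3)$, viewed as a function of the $1$-jet of the $SO(3)$-valued gauge, is a submersion in the jet directions (the derivative terms $\sum_{i,j}R_{1i}\hat e_i(R_{2j})R_{3j}-R_{2j}\hat e_j(R_{1i})R_{3i}$ can be prescribed pointwise by a suitable choice of the first derivatives of $R$, with the values of $R$ themselves held arbitrary). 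One then either (a) perturbs a fixed frame by a $C^1$-small but everywhere-nonzero contribution to this structure constant, exploiting the fact that the zero set of a perturbed function is avoided once the perturbation dominates, or (b) constructs $R$ explicitly on a handle/ball decomposition of $M$, making $g_0([e_1,e_2],e_3)$ a prescribed strictly positive function. In either case compactness gives a uniform $\delta>0$.

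With such an $H$ in hand, its eigenvalues satisfy the clustering hypothesis of Lemma~\ref{lem:goodframe} locally (each eigenvalue has multiplicity one, $L=n=3$, $m_i=1$), and the diagonalising frame $\{e_1,e_2,e_3\}$ is the required block-diagonalising frame; condition \eqref{eqn:posstr} is exactly $g_0([e_1,e_2],e_3)^2>0$, which holds on all of $M$. Thus $H\in\Yod$ in the sense of Definition~\ref{candidateopendense}, and by Lemma~\ref{lem:candidateopen} the set $\Yod$ is open, while Theorem~\ref{curvatureasy} shows $R(\gamma_h(t))\le -C_1(h)e^{C_2(h)t}\to-\infty$ uniformly on $M$ for every $h\in\Yod$; in particular the set is non-empty and open, which is the claim.

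\textbf{Main obstacle.} The genuinely non-trivial step is the global one: producing a single $g_0$-orthonormal frame (equivalently an $SO(3)$-gauge) on the parallelizable $3$-manifold $M$ for which the scalar structure constant $g_0([e_1,e_2],e_3)$ is \emph{nowhere vanishing}. Locally this is trivial by \eqref{3dframechange}, but patching local choices into a global nowhere-zero function is where orientability (hence parallelizability via \cite{Sti}) is essential, and one must be careful that the gauge transformations used to glue do not force a zero of the structure constant somewhere. I expect this to be handled either by an explicit construction adapted to a cell decomposition of $M$ or by a density/perturbation argument using the submersivity of the $1$-jet map noted above; the rest of the argument is a direct appeal to Section~3 and Theorem~\ref{curvatureasy}.
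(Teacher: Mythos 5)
Your reduction of the lemma to the diagonal case of Corollary~\ref{cor:scalfor} via parallelizability and Lemma~\ref{lem:lepar} is correct, and you correctly identify that if you could produce a global $g_0$-orthonormal frame with $g_0([e_1,e_2],e_3)$ nowhere vanishing then you would be done. But this is precisely where your argument has a genuine gap, and the two ways you propose to close it do not work as stated. The density/perturbation argument (your option (b)) cannot succeed: $g_0([e_1,e_2],e_3)$ is a single scalar function on the three-manifold $M$, and the relevant locus $\{0\}\subset\RR$ has codimension one; submersivity of the $1$-jet map plus the Thom transversality theorem only gives you that for a generic gauge the zero set of this function is a smooth codimension-one hypersurface of $M$ --- it does not, and cannot, make the zero set empty. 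Nowhere-vanishing is an open but emphatically non-generic property, so no $C^\infty$-small perturbation can remove a zero set that separates regions of opposite sign. Your option (a), an explicit construction, can in principle be made to work, but it is equivalent to producing a cooriented contact structure on $M$ whose contact plane field is trivial as an oriented rank-two bundle; that this exists is a nontrivial theorem of Lutz--Martinet on realizing every homotopy class of plane fields by contact structures, which is a much heavier tool than anything the paper invokes.

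The paper avoids this obstruction entirely by \emph{not} demanding $g_0([e_1,e_2],e_3)\neq 0$ everywhere. Instead, it reads off from Corollary~\ref{cor:scalfor} that the negative terms $F_1^{(2)}$ involving $(e_1\lambda_1)^2$ and $(e_1\lambda_2)^2$ also grow exponentially, so it suffices that at each point at least one of $g_0([e_1,e_2],e_3)$, $e_1\lambda_1$, $e_1\lambda_2$ is nonzero. The paper then works in the jet space $J^2(M,SO(3)\times\RR)$ and shows that the simultaneous vanishing locus $W=W_1\cap W_2\cap W_3\cap W_4$ (involving $e_1f$, $g_0([e_1,e_2],e_3)$, and their $e_1$-derivatives) has codimension four, hence is generically avoided by Thom transversality; generically $S=\{e_1 f=0\}\cap\{g_0([e_1,e_2],e_3)=0\}$ is a finite union of circles to which $e_1$ is nowhere tangent, after which $\lambda_2$ is built by flowing off $S$ so that $e_1\lambda_2\equiv 1$ near $S$. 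That the missing condition is codimension \emph{four} rather than codimension one is exactly what makes the paper's transversality argument succeed where yours would not. So your proposal is on the right track structurally, but the global step you isolate as the "main obstacle" is in fact the whole content of the lemma, and the route you sketch for it does not go through; you would need to either cite contact-topology input you have not named, or (better) use the paper's weaker nonvanishing criterion and the corresponding higher-codimension jet transversality argument.
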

\begin{proof}
By \cite{Sti}, orientable three-manifolds are parallelizable.
Fix a global reference $g_0$-orthonormal frame 
$\{\hat{e}_1,\hat{e}_2,\hat{e}_3\}$
and note that
any other smooth $g_0$-orthonormal frame $e=\{e_1,e_2,e_3\}$ with the same orientation is given by $e=R\hat{e}$ (as in \eqref{3dbasischange}), where $R:M\to SO(3)$ is smooth.
Consider the jet space $J^2(M,SO(3)\times \mathbb{R})$ (see
Definition 2.1 in Chapter 2 of \cite{GG}) and 
the closed subset $W\subset J^2(M,SO(3)\times \mathbb{R})$ defined by $W_1\cap W_2\cap W_3\cap W_4$, where $W_i$ are the equivalence classes of functions $(e,f)$ defined according to
\begin{align*}
    W_1&=\{e_1 f=0\},\\ W_2&=\{g_0([e_1,e_2],e_3)=0\},\\
    W_3&=\{g_0(e_1,\nabla_{g_0} (g_0([e_1,e_2],e_3)))\},\\ W_4&=\{g_0(e_1,\nabla_{g_0} (e_1 f))=0\}.
\end{align*}
 We show that $W$ is a submanifold of co-dimension four. Indeed, the first condition is a non-degenerate condition on the 1st-order jet of $f$ and the second condition is a non-degenerate condition on the 1st-order jet of $e$. To check the statement about the second equality, consider \eqref{3dframechange}, written locally. Differentiating $RR^T=I_3$ gives that, for each $j$, we have 
$$ 
(\hat{e}_j R) R^T+R (\hat{e}_j R)^T=0=(\hat{e}_j R)R^T+((\hat{e}_j R)R^T)^T.
$$
Therefore, $S_j:=(\hat{e}_j R)R^T$ is skew-symmetric for $j=1,2,3$. Note that the skew-symmetric matrices $S_j$ are all independent in the jet space. Writing in terms of $S$, we get $\hat{e}_j R=S_j R$, so 
\begin{align}
g_0([e_1,e_2],e_3)
&=
\sum_{i,j,k=1}^{3}R_{1i}(S_i)_{2k}R_{kj}R_{3j}-\sum_{i,j,k=1}^{3}R_{2j}(S_j)_{1k} R_{ki}R_{3i}+\textit{lots}\nonumber \\
&=
\sum_{i=1}^{3}R_{1i}(S_i)_{23}-\sum_{i=1}^{3}R_{2i}(S_i)_{13} +\textit{lots}
\end{align}
where \emph{lots} stands for
zeroth order terms of $R$.
Now the skew-symmetric $S_i$ terms are independent in the jet space, and at least one of the coefficient terms $R_{11},R_{12},R_{13}$ is non-zero, so we get transversality. 
The third condition is a second-order condition for $e$, and the fourth is a second order condition for $f$, so all of these co-dimension one submanifolds intersect transversally. This shows that $W$ is a submanifold of co-dimension four in the relevant jet space.

Since $W$ has co-dimension four, the Thom Transversality Theorem (Theorem 4.9 in Chapter 2 of \cite{GG}) implies that the set of functions $(e,f)$ which do not intersect 
$W=W_1\cap W_2\cap W_3\cap W_4$ is open and dense.
Similarly, $W_3\cap W_4$
defines a submanifold of co-dimension two in the jet space, so the set of functions $(e,f)$ which only intersect this submanifold at a finite number of pairwise disjoint copies of $\mathbb{S}^1$ is open and dense. 
Thus, it is possible to choose a frame $e$ and a smooth function $f$ so that  $S=\{e_1f=0\}\cap 
\{g_0([e_1,e_2],e_3)=0\}$ is a disjoint union of finitely-many copies of $\mathbb{S}^1$, and that at $S$, $e_1$ is nowhere tangent to $S$; otherwise that point would lie on $W$, a contradiction.

We now proceed to construct the required $\mathcal{Y}_{g_0}$. First, define a vector field $\partial_{\theta}$ on $M$ to be unit tangent to each of the $\mathbb{S}^1$ copies. Also define a vector field $n=e_1\times_{g_0}\partial_{\theta}$. Now consider the two-dimensional (oriented) submanifold $\tilde{S}$ found by evolving the one-dimensional submanifold $S$ by the short-time flow of $n$. Then we define the function $\lambda_2$ 
to be zero on $S$, and so that $e_1\lambda_2=1$ in a neighbourhood of $S$ in $\tilde{S}$; this is possible because $e_1$ is not tangent to $S$. Define $\lambda_1=f$, define $\lambda_3=-\lambda_1-\lambda_2$, and then add constants to $\lambda_1,\lambda_2,\lambda_3$ so that $\lambda_1<\lambda_2,\lambda_3$. Then the $H$ defined according to the eigenvalues $(\lambda_1,\lambda_2,\lambda_3)$ in the basis $\{e_1,e_2,e_3\}$, as well as all nearby choices of $H$, have scalar curvature tending to $-\infty$ uniformly along the Ebin geodesic. This follows from Corollary \ref{cor:scalfor}, using the fact that at least one of $g_0([e_1,e_2],e_3)$, $e_1\lambda_1$, $e_1\lambda_2$ is always non-zero.  
\end{proof}

\section{Symmetric and traceless matrices}

Let $(M,g_0)$ be a compact Riemannian manifold. According to Lemma \ref{lem:goodframe}, locally specifying a choice of $H\in\Gamma(\Sym_0(TM))$ is equivalent to specifying a clustering $(m_1,\cdots,m_L)$ for the eigenvalues of $H$, a $g_0$-orthonormal frame, smooth generalised eigenvalue functions $(\lambda_1,\cdots,\lambda_L)$, as well as the traceless block matrices $S_i$ which are small wherever there are $m_i$ eigenvalues close to $\lambda_i$, $i=1,...,L$. 
 However, the mutiplicity $(m_1,\cdots,m_L)$ and the choice of $g_0$-orthonormal frame are the only two of these four objects which influence the applicability of
Theorem \ref{curvatureasy}.

In Lemma \ref{lem:candidateopen}, we saw that it is convenient to let $(m_1,\cdots,m_L)$ actually \textit{coincide} with the eigenvalue multiplicities of $H$ at certain points $p$, so that $S_i$ vanishes at $p$. Therefore, in this section, we provide a pointwise study of symmetric matrices, their eigenvalue multiplicities, and how they are acted on by
the special orthogonal group
$SO(n)$. The theory developed in the next two sections builds towards establishing that, in the \textit{generic} situation, not all eigenvalue multiplicities can occur for our smooth sections $H$; this will eventually come in handy when proving
Theorem \ref{thm0}.

\subsection{Geometry of the ${\rm SO}(n)$ orbits}\label{sec:symtracefree}

We denote by $\Sym_0(n)$ the vector space of $n\times n$ symmetric traceless matrices with real entries. As a vector space, $\Sym_0(n)$ comes equipped with the natural inner product given by 
\begin{align*}
    \langle S_1,S_2\rangle =\tr(S_1\cdot S_2);
\end{align*}
this inner product gives us an isometry between $\Sym_0(n)$ and $\mathbb{R}^{\frac{n(n+1)}{2}-1}$. The special orthogonal group ${\rm SO}(n)$ acts isometrically on $\Sym_0(n)$ via
\begin{eqnarray*}
   (A,S) \mapsto A \cdot S \cdot A^T\,.
\end{eqnarray*}
Note that two $n\times n$ points in $\Sym_0(n)$ lie in the same ${\rm SO}(n)$ orbit if and only if they have the same real eigenvalues, occurring with the same multiplicities.

\begin{defin}
For  
$$
\lambda=(\lambda_1,...,\lambda_n) \in \RR^n_0:=\{ \lambda \in \RR^n: \sum_{i=1}^n \lambda_i =0\}
$$
we set
$$
  D_\lambda = {\rm Diag}(\lambda_1,..., \lambda_n).
$$
\end{defin}

 Thus, every orbit has a unique diagonal representative $D_\lambda$,
$\lambda \in \RR^n_0$, with diagonal entries 
\begin{eqnarray}
       \lambda_1 \leq \lambda_2 \leq \cdots \leq \lambda_n  \,.
\label{eigenvalueslambda}
\end{eqnarray}
We will denote the corresponding
${\rm SO}(n)$-orbit by $O_\lambda$; note that $\lambda\in \mathbb{R}^n_0$ depends continuously on the choice of orbit, but not smoothly. As a first consequence, the orbit space 
$\Sym_0(n)/{\rm SO}(n)$ is homeomorphic to the set
\begin{eqnarray*}
   \overline{\Sym}_0(n) &:=& \big\{ \lambda \in \RR^n_0 \mid \lambda_1 \leq \lambda_2 \leq \cdots \leq \lambda_n \big\}\,,
\end{eqnarray*}
equipped with the induced topology. 
Next, we are going to describe orbits and their normal spaces more precisely.
For a given $S\in \Sym_0(n)$, we denote by 
$$
 \mathfrak{m}=(m_1, \cdots, m_L)
 $$ 
 the multiplicities of the eigenvalues of $S$ ordered according to \eqref{eigenvalueslambda}. Note that
 $L\geq 1$, $m_1,...,m_L\geq 1$, $\sum_{i=1}^L m_i =n$. 
Setting 
\begin{align*}
    M_1:=m_1, \qquad M_2:=m_1+m_2,  \qquad \cdots, \qquad  M_L=m_1+\cdots +m_L=n,
\end{align*}
we then have 
\begin{align*}
    \lambda_1=\cdots = \lambda_{M_1} <\lambda_{M_1+1}=\cdots =\lambda_{M_2} <  \cdots 
    <\lambda_{M_{L-1}+1}=\cdots =\lambda_n.
\end{align*}
Notice that the multiplicity $\m_0=(n)$ only occurs for the zero matrix
and that $\m=(1,1,...,1)$ is the generic
case, meaning that all eigenvalues are pairwise distinct.

We denote by $I_{m_i} \in {\rm O}(m_i)$ the identity matrix.

\begin{lem}          \label{lem:normalorbit}
Suppose that $\lambda\in \RR^n_0$ satisfies {\rm (\ref{eigenvalueslambda})} and let
$\mathfrak{m}=(m_1,...,m_L)$ be the eigenvalue multiplicities of $D_\lambda$. Then, $\Sym_{0}(n)$ has the orthogonal decomposition \begin{eqnarray*}
   %      \lefteqn{   
   %=}&&\\
    \Sym_0(n)
  &=&
  \left\{ \left( \begin{array}{ccc} S_1 && \\ & \ddots & \\ && S_L \end{array}\right) \mid
            S_i \in \Sym_0(m_i), \ i=1,...,L\right\} \nonumber \\
      &&
          \oplus  \left\{ \left( \begin{array}{ccc} \mu_1 \cdot I_{m_1} && \\ & \ddots & \\ && \mu_L\cdot I_{m_L}
          \end{array}\right) 
          \mid  \sum_{j=1}^L m_j\mu_j  =0\right\}
          \\
      &&
      \oplus  \left\{ S(R_{12},\cdots,R_{(L-1)L})
          \mid   R_{ij}\in {\rm Mat}(m_i,m_j), \ 1\le i<j\le L \right\} \nonumber
\end{eqnarray*}
with
$$
 S(R_{12},\cdots ,R_{(L-1)L})
 =
 \left( \begin{array}{cccc} 0&R_{12}&\cdots &R_{1L}\\
      (R_{12})^T & 0&\cdots & \vdots \\
      \vdots &\ddots& \ddots & R_{(L-1)L}\\
      (R_{1L})^T&\cdots&(R_{(L-1)L})^T&0
          \end{array}\right) \,.
$$
Furthermore, the homogeneous space
$O_\lambda$ has the representation
\begin{eqnarray*}
      O_\lambda ={\rm SO}(n)/ {\rm SO}_{\m}(n)
\end{eqnarray*}     
with 
\begin{eqnarray}\label{eqn:SOm}
     {\rm SO}_{\m}(n)=
       {\rm S}\big({\rm O}(m_1)\times  {\rm O}(m_2)\times  \cdots \times {\rm O}(m_L)\big)\,,  
\end{eqnarray}
and $T_{D_{\lambda}}O_{\lambda}$ coincides with the third summand in the expression for $\Sym_0(n)$. 
\end{lem}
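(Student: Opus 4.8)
The plan is to establish the three assertions of Lemma~\ref{lem:normalorbit} — the orthogonal direct sum decomposition of $\Sym_0(n)$, the identification of the isotropy group and hence of $O_\lambda$ as a homogeneous space, and the identification of $T_{D_\lambda}O_\lambda$ with the third summand — by elementary linear algebra organised around the $L\times L$ block structure determined by the multiplicity tuple $\m=(m_1,\dots,m_L)$. Throughout, write $D_\lambda$ in block form so that its $i$-th diagonal block is the scalar matrix $c_i I_{m_i}$, where $c_1<\cdots<c_L$ are the distinct eigenvalues; the only fact about $D_\lambda$ that will be used essentially is $c_i\ne c_j$ for $i\ne j$.

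For the decomposition, write an arbitrary $S\in\Sym_0(n)$ in $L\times L$ block form, with $(i,j)$ block $S_{ij}$ of size $m_i\times m_j$. Each diagonal block $S_{ii}$ is symmetric and splits uniquely as $S_{ii}=\hat S_i+\mu_i I_{m_i}$ with $\hat S_i\in\Sym_0(m_i)$ and $\mu_i=\tfrac{1}{m_i}\tr S_{ii}$, while the off-diagonal blocks obey $S_{ji}=(S_{ij})^T$, i.e.\ together they form a matrix of the shape $S(R_{12},\dots,R_{(L-1)L})$. Since $\tr\hat S_i=0$, the constraint $\tr S=0$ is equivalent to $\sum_j m_j\mu_j=0$, which yields exactly the three listed families. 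Orthogonality for $\langle S_1,S_2\rangle=\tr(S_1S_2)$ is immediate: the product of a block-diagonal matrix with an off-diagonal one has vanishing diagonal blocks, hence vanishing trace, and inside the block-diagonal part $\tr(\hat S_i\cdot\mu_i I_{m_i})=\mu_i\tr\hat S_i=0$. A dimension count closes this part: the three summands have dimensions $\sum_i\big(\binom{m_i+1}{2}-1\big)$, $L-1$, and $\sum_{i<j}m_i m_j$, which sum to $\binom{n+1}{2}-1=\dim\Sym_0(n)$.

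For the isotropy group, $A\in\SO(n)$ fixes $D_\lambda$ if and only if $A D_\lambda A^T=D_\lambda$, that is, $A$ commutes with $D_\lambda$; because the blocks $c_i I_{m_i}$ of $D_\lambda$ have pairwise distinct scalars, the commutant of $D_\lambda$ in $\mathrm{Mat}(n)$ consists precisely of the block-diagonal matrices $\mathrm{diag}(A_1,\dots,A_L)$, $A_i\in\mathrm{Mat}(m_i)$. Intersecting with $\Or(n)$ forces each $A_i\in\Or(m_i)$, and intersecting further with $\SO(n)$ imposes $\prod_i\det A_i=1$; this is exactly $\SO_{\m}(n)=\mathrm{S}\big(\Or(m_1)\times\cdots\times\Or(m_L)\big)$ of \eqref{eqn:SOm}. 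Since $O_\lambda$ is by definition the $\SO(n)$-orbit of $D_\lambda$, the action is transitive and the orbit map induces the stated diffeomorphism $O_\lambda\cong\SO(n)/\SO_{\m}(n)$. For the tangent space, differentiate the orbit map $A\mapsto A D_\lambda A^T$ at $A=I$: its image is $\{X D_\lambda-D_\lambda X:X\in\so(n)\}$, where $\so(n)$ are the skew-symmetric matrices. This image already lies in $\Sym_0(n)$, since $X D_\lambda-D_\lambda X$ is symmetric whenever $X$ is skew and $D_\lambda$ symmetric and has trace zero. Writing $X$ in blocks ($X_{ii}$ skew, $X_{ji}=-X_{ij}^T$), the $(i,j)$ block of $X D_\lambda-D_\lambda X$ is $(c_j-c_i)X_{ij}$; the diagonal blocks vanish, and for $i\ne j$, since $c_i\ne c_j$, the pair $(X_{ij},X_{ji})$ is mapped onto an arbitrary pair $(R_{ij},R_{ij}^T)$. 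Hence the image is precisely $\{S(R_{12},\dots,R_{(L-1)L})\}$, the third summand; consistently, $\dim O_\lambda=\binom{n}{2}-\sum_i\binom{m_i}{2}=\sum_{i<j}m_i m_j$.

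The routine part here is the block bookkeeping. The one step that truly uses the structure — and the point I would flag as the crux — is the tangent-space identification: it is the distinctness $c_i\ne c_j$ of the cluster eigenvalues that makes $X_{ij}\mapsto(c_j-c_i)X_{ij}$ an isomorphism of $\mathrm{Mat}(m_i,m_j)$, so that $T_{D_\lambda}O_\lambda$ fills out the whole off-diagonal summand rather than a proper subspace; the same distinctness is what pins down the commutant, hence the isotropy group, as block-diagonal.
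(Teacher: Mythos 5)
Your proof is correct and follows essentially the same route as the paper's: the paper likewise obtains $T_{D_\lambda}O_\lambda$ by differentiating $A(t)D_\lambda A(t)^T$ at the identity, writing the skew-symmetric generator in blocks, and using $\lambda_{M_k}\neq\lambda_{M_l}$ to see that the off-diagonal blocks get scaled by nonzero factors. You simply spell out the parts the paper declares ``obvious'' (the orthogonality and dimension count of the decomposition) and ``clear'' (the commutant argument for the isotropy group), which is fine.
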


\begin{proof}
The fact that the decomposition is orthogonal with respect to our Euclidean inner product $\langle S_1,S_2\rangle =\tr(S_1\cdot S_2)$ is obvious. We now compute $T_{D_{\lambda}}O_{\lambda}$. To this end, let $A(t)$, $t \in (-\epsilon,\epsilon)$, be a smooth curve in ${\text{SO}}(n)$ with $A(0)=I_n$ and $A'(0)=T\in \text{Scew}(n)=\mathfrak{so}(n)$. Then 
$$
 \tfrac{d}{dt}\vert_{t=0}\,\, A(t) \cdot D_\lambda \cdot A(t)^T = (T \cdot D_\lambda -D_\lambda \cdot T)\,.
$$
Writing
\begin{align*}
   T=     \left( \begin{array}{cccc} T_{11}&R_{12}&\cdots &R_{1L}\\
      -(R_{12})^T & T_{22}&\cdots & \vdots\\
      \vdots &\ddots& \ddots &R_{(L-1)L}\\
     - (R_{1L})^T&\cdots&-(R_{(L-1)L})^T&T_{LL}
          \end{array}\right) 
\end{align*}
with $T_{ii} \in \Scew(m_i)$ and $R_{ij} \in {\rm Mat}(m_i,m_j)$
we deduce that $\tfrac{d}{dt} \,\,A(t) \cdot D_\lambda \cdot A(t)^T$ at $t=0$ is given by
a matrix where $T_{11},\cdots, T_{LL}$
is replaced by zero and
$R_{kl}$, $1\leq k < l\leq L$
is replaced by $(\lambda_{M_l}-\lambda_{M_k})\cdot R_{kl}$.
%\begin{align*}
%        \left( \begin{array}{cccc} %0&(\lambda_{M_2}-\lambda_{M_1})R_{12}&\cdots %&(\lambda_{M_L}-\lambda_{M_1})R_{1L}\\
%      (\lambda_{M_2}-\lambda_{M_1})%(R_{12})^T & 0&\cdots & (\lambda_{M_L}-%\lambda_{M_1})R_{2L}\\
%      \vdots &\ddots& \ddots &\vdots\\
%      (\lambda_{M_L}-\lambda_{M_1})(R_{1L})^T&\cdots&(\lambda_{M_L}-\lambda_{M_{L-1}})(R_{(L-1)L})^T&0
%          \end{array}\right)    \,.
%\end{align*}
Since $R_{ij}$ can be chosen arbitrarily, and $\lambda_{M_l}\neq \lambda_{M_k}$ for all $k\neq l$, the claim about $T_{D_{\lambda}}\Sym_0(n)$ follows. It is clear that the isotropy subgroup of ${\rm SO}(n)$ for the point $D_\lambda$ is $
       {\rm SO}_{\m}(n)$ so that $O_{\lambda}$ has the required expression as a homogeneous space.  
\end{proof}

An orbit $O_\lambda$ is a principal orbit of the action of ${\rm SO}(n)$ in $\Sym_0(n)$ if all eigenvalues are pairwise distinct, or equivalently,  $O_\lambda$ lies in the interior of the orbit space.
We call principal orbits \textit{regular} and non-principal orbits \textit{singular}. Notice that the boundary
of the orbit space is just the set of singular orbits.

\subsection{Stratification of the orbit space}

\begin{defin} 
 Let $L\geq 1$, $m_1,...,m_L\geq 1$ be integers with $\sum_{i=1} ^L m_i=n$
and let $\mathfrak{m}=(m_1,...,m_L)$. Then   we define
\begin{eqnarray*}
      \overline{F} (\m) &:=& \{O_\lambda \mid   \lambda_1 = \cdots =\lambda_{m_1}< \lambda_{m_1+1} \cdots
                         < \lambda_{M_{L-1}+1} = \cdots = \lambda_n\}\,.
\end{eqnarray*}
\end{defin}

The face $\overline{F}(\m)$ contains all the orbits with the same eigenvalue multiplicities $m_1,...,m_L$.
The dimension of $\overline{F} (\m)$ equals $L-1$ in the orbit space.
If $\m_p=(1,...,1)$, then the principal face $\overline{F} ({\m_p})$, an open subset of $\overline{\Sym}_0(n)$,
contains all principal orbits.  The faces distinct from the principal face are called singular faces. The following definition provides a way of describing which faces are `more singular' than others.

\begin{defin}
Let $\m=(m_1,...,m_L)$ and $\tilde \m=(\tilde m_1,...,\tilde m_{\tilde L})$
be two eigenvalue multiplicities. Then we write $\m \leq \tilde \m$ if and only if
there is a surjective and non-decreasing function $J:\{1,\cdots,\tilde{L}\}\to 
\{1,\cdots,L\}$ so that $m_i=\sum_{j\in J^{-1}(i)}\tilde{m}_j$ for all $i\in 
\{1,\cdots,L\}$. 
We write $\m < \tilde \m$ if $\m \leq \tilde \m$ and $\m \neq \tilde \m$.
\end{defin}

Note that if $\mathfrak{m}_0=(n)$, then $\mathfrak{m}_0\le \tilde{\mathfrak{m}}$ for any multiplicity $\tilde{\mathfrak{m}}$, so the corresponding face $\overline{F}((n))$ in $\overline{\Sym}_0(n)$ can be thought of as `the most singular face'. This is reflective of the fact that the origin in $\Sym_0(n)$ is the only zero-dimensional orbit. 

For a subset $\overline{X} \in \overline{\Sym}_0(n)$ we denote by
$\cl(\overline{ X})$ its closure. The next lemma establishes a relationship between the closure of a particular face $\overline{F}(\tilde{\mathfrak{m}})$, and the faces $\overline{F}(\mathfrak{m})$ that are `more singular'. 

\begin{lem}\label{lem:closure}
 Let $\m, \tilde \m$ be two eigenvalue multiplicities. Then $\m \leq \tilde \m$
if and only if  $\overline{F}({\m})$ is contained in the closure
of $\overline{F}(\tilde {\m})$. Moreover
\begin{eqnarray*}
                \cl(\overline{F}(\tilde \m))\backslash \overline{F}(\tilde \m)&=& \bigcup_{\m < \tilde \m} \overline{F}(\m)\,.
\end{eqnarray*}
\end{lem}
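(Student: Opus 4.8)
The statement has two parts: an equivalence ($\m \leq \tilde\m \iff \overline{F}(\m) \subseteq \cl(\overline{F}(\tilde\m))$) and the decomposition of the boundary $\cl(\overline{F}(\tilde\m)) \setminus \overline{F}(\tilde\m)$. The second part follows quickly from the first: a point $O_\lambda \in \cl(\overline{F}(\tilde\m))$ either lies in $\overline{F}(\tilde\m)$ itself or lies in some $\overline{F}(\m)$ with $\m \neq \tilde\m$; by the equivalence $\m \leq \tilde\m$, hence $\m < \tilde\m$; conversely every $\overline{F}(\m)$ with $\m < \tilde\m$ lies in the closure and is disjoint from $\overline{F}(\tilde\m)$ (the faces partition the orbit space). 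So I would prove the equivalence and then deduce the displayed equation in one or two lines.

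\textbf{The equivalence.} Work in the model $\overline{\Sym}_0(n) = \{\lambda \in \RR^n_0 : \lambda_1 \leq \cdots \leq \lambda_n\}$ with its Euclidean subspace topology; here $\overline{F}(\m)$ is exactly the set of $\lambda$ whose pattern of equalities/strict inequalities is prescribed by $\m$, an affine-linear slice (intersect the hyperplane $\sum\lambda_i = 0$ with the subspace $\lambda_1 = \cdots = \lambda_{M_1}$, etc.) with the open conditions $\lambda_{M_i} < \lambda_{M_i+1}$ imposed. For the forward direction, suppose $\m \leq \tilde\m$ via the surjective non-decreasing $J$. Given $\lambda \in \overline{F}(\m)$ with distinct values $\mu_1 < \cdots < \mu_L$ (value $\mu_i$ with multiplicity $m_i$), I construct, for small $s > 0$, a point $\lambda^{(s)} \in \overline{F}(\tilde\m)$ by splitting each cluster: within the block of indices assigned value $\mu_i$, assign to the $j$-th sub-block (for $j \in J^{-1}(i)$) a slightly perturbed value $\mu_i + s\cdot c_j$, where the constants $c_j$ are chosen strictly increasing in $j$ within each fiber $J^{-1}(i)$ so the strict-inequality pattern of $\tilde\m$ holds, and chosen so that $\sum_j \tilde m_j (s c_j)$-type corrections keep the trace zero (or simply first build any such perturbation, then subtract its mean). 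As $s \to 0$, $\lambda^{(s)} \to \lambda$, giving $\lambda \in \cl(\overline{F}(\tilde\m))$. For the converse, suppose $\overline{F}(\m) \subseteq \cl(\overline{F}(\tilde\m))$ and pick any $\lambda \in \overline{F}(\m)$ with distinct values $\mu_1 < \cdots < \mu_L$. Take a sequence $\lambda^{(k)} \in \overline{F}(\tilde\m)$ with $\lambda^{(k)} \to \lambda$. For $k$ large, the coordinates of $\lambda^{(k)}$ cluster near $\mu_1, \ldots, \mu_L$: precisely $m_i$ of them converge to $\mu_i$ (by continuity and since the $\mu_i$ are distinct, the sizes of the clusters are eventually constant). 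Each such cluster of $m_i$ coordinates of $\lambda^{(k)}$, being a contiguous block of the ordered tuple $\lambda^{(k)} \in \overline{F}(\tilde\m)$, is itself partitioned into some consecutive sub-blocks of the $\tilde\m$-pattern; recording which sub-blocks of $\tilde\m$ fall into which $\m$-cluster defines the required non-decreasing surjection $J$ with $m_i = \sum_{j \in J^{-1}(i)} \tilde m_j$. Hence $\m \leq \tilde\m$.

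\textbf{Main obstacle.} The only subtle point is the bookkeeping in the converse: one must argue that for large $k$ the coordinates of $\lambda^{(k)}$ really do split into clusters of sizes exactly $m_1, \ldots, m_L$ (not fewer or more), which uses that the target point $\lambda$ has \emph{exactly} $L$ distinct values with those multiplicities together with the fact that coordinates are listed in non-decreasing order, so each cluster is an \emph{interval} of indices — this is what makes $J$ well-defined and non-decreasing. A clean way to handle this is to fix $\delta < \tfrac12 \min_i |\mu_{i+1} - \mu_i|$ and observe that for $k$ large every coordinate of $\lambda^{(k)}$ lies in $\bigcup_i (\mu_i - \delta, \mu_i + \delta)$ with exactly $m_i$ of them in the $i$-th interval; the contiguity of clusters in $\overline{F}(\tilde\m)$ then forces each $\tilde\m$-sub-block to land entirely in one interval, and $J$ is read off directly. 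The construction in the forward direction is routine once one commits to perturbing "within clusters only."

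Putting these together: $\cl(\overline{F}(\tilde\m)) = \overline{F}(\tilde\m) \cup \bigcup_{\m < \tilde\m} \overline{F}(\m)$ by the equivalence, and since the faces $\overline{F}(\m)$ over distinct multiplicities $\m$ are pairwise disjoint (every orbit lies in exactly one face), removing $\overline{F}(\tilde\m)$ leaves precisely $\bigcup_{\m < \tilde\m} \overline{F}(\m)$, which is the claimed identity.
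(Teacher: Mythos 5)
Your proof is correct and follows essentially the same route as the paper: in the forward direction both arguments perturb the eigenvalues within each $\m$-cluster to produce nearby points of $\overline{F}(\tilde\m)$; in the converse the paper argues by contrapositive (if $\m\not\leq\tilde\m$ some $\tilde\m$-block would have to straddle a strict inequality of $\m$, preventing approximation), whereas you construct the surjection $J$ directly from a convergent sequence, which amounts to the same combinatorial observation. Your handling of the second displayed identity — closure is a union of faces by the equivalence, faces partition the orbit space, subtract — matches the paper's intent and is cleanly justified.
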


\begin{proof}
Take two eigenvalue multiplicities $\mathfrak{m}=(m_1,\cdots\!,m_L)$ and $\tilde{\mathfrak{m}}=(\tilde{m}_1,\cdots\!,\tilde{m}_{\tilde{L}})$. If $\mathfrak{m}\le \tilde{\mathfrak{m}}$, take an arbitrary $D_{\lambda}\in \overline{\Sym}_0(n)$ so that $\lambda$ has $\mathfrak{m}$ as its multiplicity vector, thus
\begin{align*}
    D_{\lambda}=\text{diag}(\underbrace{\lambda_1,\cdots,\lambda_1}_{m_1},\underbrace{\lambda_2,\cdots,\lambda_2}_{m_2},\cdots, \underbrace{\lambda_L,\cdots,\lambda_L}_{m_L})
\end{align*}
for some numbers $\lambda_1,\cdots,\lambda_L$ which are strictly increasing. It is clear that $D_{\lambda}$ can be approximated by diagonal matrices of the form 
\begin{align*}
    \begin{pmatrix}
      \lambda_1 I_{m_1}+S_{m_1} & 0&\cdots &0\\
      0&\lambda_2 I_{m_2}+S_{m_2}&\cdots&0\\ 
      \vdots&\vdots&\ddots&\vdots\\
      0&0&\cdots&\lambda_{L}I_{m_L}+S_{m_L}
    \end{pmatrix}
\end{align*}
for traceless symmetric matrices $S_{m_i} \neq 0$, $i=1, \cdots, L$. Since $\mathfrak{m}\le \tilde{\mathfrak{m}}$, we can choose the $S_{m_i}$ matrices so that our approximating sequence lies in $\overline{F}(\tilde{\mathfrak{m}})$. Thus,
$\overline{F} (\m) \subseteq \cl(\overline{F}(\tilde \m))$.

On the other hand,
suppose $\overline{F} (\m) \subseteq \cl(\overline{F}(\tilde \m))$. Then, 
if it is \textit{not} true that $\mathfrak{m}\le \tilde{\mathfrak{m}}$, then for any $D_{\lambda}\in \overline{F}(\mathfrak{m})$ and $D_{\tilde{\lambda}}\in \overline{F}(\tilde{\mathfrak{m}})$, there is an $i\in \{1,\cdots,L\}$ and a $j\in \{1,\cdots,\tilde{L}\}$ so that the 
$\tilde{\lambda}_j$ eigenspace of
$D_{\tilde{\lambda}}$  has a non-trivial intersection with both the $\lambda_i$ and $\lambda_{i+1}$ eigenspaces of $D_{\lambda}$. Therefore, since $\lambda_i<\lambda_{i+1}$, $D_{\lambda}$ cannot be approximated by diagonal matrices in $\overline{F}(\tilde{\mathfrak{m}})$.

Now for a given $\tilde{\mathfrak{m}}=(\tilde{m}_1,\cdots ,\tilde{m}_{\tilde{L}})$, the set $\cl(\overline{F}(\tilde \m))\backslash \overline{F}(\tilde \m)$ consists precisely of the matrices 
\begin{align*}
    D_{\tilde{\lambda}}=\text{diag}(\underbrace{\tilde{\lambda}_1,\cdots,\tilde{\lambda}_1}_{\tilde{m}_1},\underbrace{\tilde{\lambda}_2,\cdots,\tilde{\lambda}_2}_{\tilde{m}_2},\cdots, \underbrace{\tilde{\lambda}_L,\cdots,\tilde{\lambda}_L}_{\tilde{m}_L})
\end{align*}
with $\tilde{\lambda}_1\le \tilde{\lambda}_2\le \cdots \le \tilde{\lambda}_{\tilde{L}}$, with at least two of these $\tilde{L}$ eigenvalues being equal. By ranging over the possible choices for eigenvalues being equal, we get all possible matrices in $\overline{F}(\mathfrak{m})$, for $\mathfrak{m}<\tilde{\mathfrak{m}}$. 
\end{proof}

\subsection{Stratification of $\Sym_0(n)$}

It becomes important for us to relate the faces $\overline{F}(\mathfrak{m})\subseteq \overline{\Sym}_0(n)$ with their  counterparts in $\Sym_0(n)$. To this end, denote by
\begin{eqnarray*}
  \pi : \Sym_0(n) \to   \overline{\Sym}_0(n)= \Sym_0(n)/{\rm SO}(n)
\end{eqnarray*}
the projection to the orbit space. Let
$S(\Sym_0(n))$  denote the unit sphere
in $\Sym_0(n)$ and set $\bar S:=\pi ( S(\Sym_0(n)))$ and 
$\overline{S}(\m)=\overline{F}(\m) \cap \bar S$. Note that $\overline{S}((n))=\emptyset$
but $\overline{S}(\m)\neq \emptyset$
for all  $\m>\m_0=(n)$.

\begin{defin}\label{definF}
 Let $L\geq 1$, $m_1,...,m_L\geq 1$ be integers with $\sum_{i=1} ^L m_i=n$
and let $\mathfrak{m}=(m_1,...,m_L)$. 
 We set
$$
  F(\m) := \pi^{-1} (\overline{F}(\m))\quad\textrm{ and }\quad S(\m):=F(\m) \cap S(\Sym_0(n))\,.
$$
\end{defin}

It is important to notice that if $L=1$, then this face $F((n))$ 
is just one point, the origin.
If $L=2$, then a face $F(\m_k)$
is  a cone over an orbit
with eigenvalue multiplicity $\m_k=(k,n-k)$ for some $1\leq k \leq n-1$.
The corresponding
spherical faces $\bS(\m_k)$ are the vertices of  $\bS$.
More generally, $\bF(\m)$ is a cone over $\bS(\m)$, if $\bF(\m)$ has positive dimension. In particular
the orbit space itself is a cone over 
$\bar S$.

\begin{lem}\label{lem:submanifold}
  Let $\m=(m_1,\cdots ,m_L)$ with $m_1,\cdots ,m_L \geq 1$ and $\sum_{i=1}^L m_i=n$.
Then $F(\m)$ is an embedded
submanifold of $\Sym_0(n)$ of
$\codim F(\m)=\sum_{i=1}^{L}d(m_i)$,
where for each $i=1,\cdots ,L$ we set
$d(m_i)=\tfrac{m_i(m_i+1)}{2}-1$.
 Moreover, if 
$L\geq 2$, $F(\m)$ is
a cone over the spherical face $S(\m)$
and Lemma {\rm \ref{lem:closure}} 
 holds  true for the faces $F(\m)$. For a diagonal matrix $D_\lambda \in F(\m)$ the normal space $\nu_{D_\lambda} F(\m)$ is
given by
           \begin{eqnarray}\label{normalspace}
             \nu_{D_\lambda}F(\m) &=&
        \left\{ \left( \begin{array}{ccc} S_1 && \\ & \ddots & \\ && S_L \end{array}\right) \mid
            S_i \in \Sym_0(m_i)\,\,\,,i=1,...,L\right\} \,.
\end{eqnarray}
\end{lem}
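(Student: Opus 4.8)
The plan is to work orbit-by-orbit inside $F(\mathfrak m)$ and glue the local information together. First I would fix a diagonal matrix $D_\lambda \in F(\mathfrak m)$, so that $\lambda$ has multiplicity vector $\mathfrak m = (m_1,\dots,m_L)$, and use the slice decomposition from Lemma \ref{lem:normalorbit}. That lemma writes $\Sym_0(n)$ as an orthogonal sum of three pieces at $D_\lambda$: the ``block-diagonal traceless'' summand $\bigoplus_i \Sym_0(m_i)$, the ``scalar-on-blocks'' summand $\{\mathrm{diag}(\mu_1 I_{m_1},\dots,\mu_L I_{m_L}): \sum m_j\mu_j = 0\}$, and the tangent space $T_{D_\lambda}O_\lambda$. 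The key observation is that $F(\mathfrak m)$ is exactly the set of matrices whose eigenvalues have multiplicities that \emph{refine to} $\mathfrak m$ in the order-preserving sense, and near $D_\lambda$ this locus is swept out precisely by moving $D_\lambda$ along the second summand (varying the block-eigenvalues, which keeps each block a scalar block) and along $T_{D_\lambda}O_\lambda$ (applying $\mathrm{SO}(n)$). The first summand $\bigoplus_i \Sym_0(m_i)$ is precisely the transverse (normal) direction that destroys the multiplicities, pushing into strictly smaller $\mathfrak m$. So the slice through $D_\lambda$ transverse to $O_\lambda$ meets $F(\mathfrak m)$ only in the $(L-1)$-dimensional ``scalar-on-blocks'' affine piece; together with the orbit directions this gives that $F(\mathfrak m)$ is, near $D_\lambda$, an embedded submanifold with normal space exactly \eqref{normalspace}, and its codimension is $\dim\big(\bigoplus_i \Sym_0(m_i)\big) = \sum_{i=1}^L d(m_i)$ with $d(m_i) = \tfrac{m_i(m_i+1)}2 - 1$.

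To make the local picture rigorous I would use the Slice Theorem for the isometric $\mathrm{SO}(n)$-action: in a tubular neighbourhood of $O_\lambda$, $\Sym_0(n) \cong \mathrm{SO}(n) \times_{\mathrm{SO}_{\mathfrak m}(n)} \nu$, where $\nu = \bigoplus_i \Sym_0(m_i) \oplus \{\text{scalar-on-blocks}\}$ is the normal space to $O_\lambda$. Inside $\nu$, the locus where the eigenvalue multiplicities still refine to $\mathfrak m$ is exactly the linear subspace $\{\text{scalar-on-blocks}\}$ (perturbing any $S_i \neq 0$ splits the $i$-th block), and this subspace is $\mathrm{SO}_{\mathfrak m}(n)$-fixed, so it survives the associated-bundle construction and produces an embedded submanifold of dimension $\dim O_\lambda + (L-1) = \dim\mathrm{SO}(n) - \dim\mathrm{SO}_{\mathfrak m}(n) + (L-1)$; a dimension count confirms the stated codimension. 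Because every point of $F(\mathfrak m)$ is in the $\mathrm{SO}(n)$-orbit of such a $D_\lambda$ and the construction is $\mathrm{SO}(n)$-equivariant, the local charts patch to show $F(\mathfrak m)$ is globally embedded, with $\nu_{D_\lambda}F(\mathfrak m)$ as in \eqref{normalspace} (and at a general point obtained by conjugation). Equivariance also shows the normal bundle is $\mathrm{SO}(n)$-homogeneous, matching the slice description.

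For the cone structure when $L \geq 2$: scaling $S \mapsto cS$ for $c > 0$ preserves eigenvalue multiplicities and hence preserves $F(\mathfrak m)$, and the only fixed point of this $\RR_{>0}$-action in $F(\mathfrak m)$ is the origin, which lies in $F(\mathfrak m)$ iff $\mathfrak m$ is the trivial partition $(n)$ — excluded since $L \geq 2$. Thus $F(\mathfrak m) \setminus \{0\}$ retracts radially onto $S(\mathfrak m) = F(\mathfrak m) \cap S(\Sym_0(n))$, exhibiting $F(\mathfrak m)$ as the open cone on $S(\mathfrak m)$. Finally, the analogue of Lemma \ref{lem:closure} transfers by applying $\pi^{-1}$: since $\pi$ is an open continuous surjection and $F(\mathfrak m) = \pi^{-1}(\overline F(\mathfrak m))$, we get $\mathrm{cl}(F(\tilde{\mathfrak m})) = \pi^{-1}(\mathrm{cl}(\overline F(\tilde{\mathfrak m})))$ and hence $\mathrm{cl}(F(\tilde{\mathfrak m})) \setminus F(\tilde{\mathfrak m}) = \bigcup_{\mathfrak m < \tilde{\mathfrak m}} F(\mathfrak m)$ directly from the orbit-space statement, plus the closure-containment characterisation $\mathfrak m \leq \tilde{\mathfrak m} \iff F(\mathfrak m) \subseteq \mathrm{cl}(F(\tilde{\mathfrak m}))$.

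\textbf{Main obstacle.} The delicate point is verifying that, inside the slice $\nu$, the multiplicity-$\mathfrak m$ locus is \emph{exactly} the linear ``scalar-on-blocks'' subspace and not something larger or non-linear — i.e. that turning on any nonzero $S_i$ genuinely breaks the $i$-th eigenvalue cluster and that no eigenvalues from different blocks can collide to recreate multiplicity $\mathfrak m$ in a new way within a small enough neighbourhood. The first follows because a nonzero traceless symmetric $m_i\times m_i$ matrix has at least two distinct eigenvalues; the second follows from the spectral gap between the $\lambda_i$'s, which persists under small perturbations. Making these quantitative (choosing the tubular neighbourhood small relative to $\min_i|\lambda_{i+1}-\lambda_i|$) is routine but is where the real content of the submanifold claim lies; the cone and closure assertions are then formal.
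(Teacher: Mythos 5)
Your proof is correct, and while it reaches the same conclusions by the same underlying geometry, the packaging is genuinely different from the paper's. The paper works with the polar section ${\rm Fl}$ (the $(n-1)$-dimensional flat of diagonal traceless matrices) and the sub-flat ${\rm Fl}_\lambda$ through $D_\lambda$; it builds the candidate submanifold as the saturation $\SO(n)\cdot B_\epsilon^{{\rm Fl}_\lambda}(D_\lambda)$, proves it embedded by invoking polarity (orbits meet ${\rm Fl}$ orthogonally, and nearby points of ${\rm Fl}_\lambda$ all have isotropy $\SO_{\m}(n)$), and then shows this saturation actually equals $F(\m)$ near $D_\lambda$ via a concrete contradiction argument: for a sequence $S_k\in F(\m)$ with $S_k\to D_\lambda$, conjugate by nearby $A_k\in\SO(n)$ into $D_\lambda+\nu_{D_\lambda}F(\m)$, then by $B_k\in\SO_\m(n)$ to diagonalize, and observe that landing outside ${\rm Fl}_\lambda$ would force more than $L$ distinct eigenvalue clusters. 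You instead use the full normal slice $\nu=\bigoplus_i\Sym_0(m_i)\oplus\{\text{scalar-on-blocks}\}$ together with the Slice Theorem for the linear $\SO(n)$-action, identify the $F(\m)$-locus in the slice as the pointwise $\SO_\m(n)$-fixed linear piece $\{\text{scalar-on-blocks}\}$, and note the associated bundle then degenerates to the product $O_\lambda\times\{\text{scalar-on-blocks}\}_\epsilon$, which is manifestly embedded. Both routes hinge on the same two inputs --- the orthogonal decomposition of Lemma \ref{lem:normalorbit}, and the spectral-gap observation that any nonzero $S_i\in\Sym_0(m_i)$ splits the $i$-th cluster while small perturbations keep distinct clusters apart --- but yours externalizes the locality to the Slice Theorem, whereas the paper's stays elementary at the cost of an ad hoc sequence argument. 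One phrasing slip: you say $F(\m)$ is the set of matrices whose multiplicities ``refine to $\m$''; it is the set whose multiplicity vector \emph{equals} $\m$, as the remainder of your argument makes clear you understand. Your derivations of the cone structure and of the closure relation (using that $\pi$ is an open, continuous, saturated surjection) are sound and match in substance what the paper disposes of as ``obvious.''
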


\begin{proof}
If $\m=(n)$, that is $L=1$, the claim is obvious. So let us assume $L\geq 2$
and let $S \in F(\m)$. Using the group action of ${\rm SO}(n)$
we may assume  $S=D_\lambda$. Equality \eqref{normalspace} is a consequence of Lemma \ref{lem:normalorbit}.
We denote by ${\rm Fl}$ the space of diagonal traceless symmetric matrices. Notice that there is a vector space isomorphism between 
${\rm Fl}$ and $ \RR^{n-1}$.
Next let ${\rm Fl}_\lambda$ denote the unique 
$(L-1)$-dimensional subspace of
${\rm Fl}$ containing all diagonal traceless matrices
having the same eigenspaces as $D_\lambda$. We denote by
$B_\epsilon^{{\rm Fl}}(D_\lambda)$ the open
$\epsilon$-ball in ${\rm Fl}$ and by
$B_\epsilon^{{\rm Fl}_\lambda}(D_\lambda)$
the open $\epsilon$-ball in ${\rm Fl}_\lambda$, centered at $D_\lambda$, with respect to the flat metrics on ${\rm Fl}$ and ${\rm Fl}_\lambda$,
respectively. Notice that
$B_\epsilon^{{\rm Fl}}(D_\lambda) \cap F(\m)=
B_\epsilon^{{\rm Fl}_\lambda}(D_\lambda)$ by the very definition
of $F(\m)$ (Definition \ref{definF}). If $\epsilon$ is small enough,
then for all $D_{\lambda '}  \in B_\epsilon^{{\rm Fl}_\lambda}(D_\lambda) $ the isotropy group
is given by  ${\rm SO}_{\m}(n)$, see \eqref{eqn:SOm}.
Since ${\rm SO}(n)$ acts polarly (see proof of Lemma \ref{lem:normalorbit}), 
each orbit intersects ${\rm Fl}$ orthogonally, and consequently the
set ${\rm SO}(n)\cdot B_\epsilon^{{\rm Fl}_\lambda}(D_\lambda)$
is an embedded submanifold of $\Sym_0(n)$
of dimension
 $$
 \dim (O_\lambda) +L-1=\tfrac{1}{2}n(n-1)-( \sum_{i=1}^L \tfrac{1}{2}m_i(m_i-1))+L-1\,.
$$ 
Notice that
$$
  \tfrac{1}{2}n(n+1)-1 - \big(\dim (O_\lambda) +L-1 \big) 
  =\sum_{i=1}^l d(m_i)\,.
$$
It remains to show that 
${\rm SO}(n)\cdot B_\epsilon^{{\rm Fl}_\lambda}(D_\lambda)$ is an open subset of $F(\m)$.
To see this let us denote by $B_\epsilon(D_\lambda)$ 
the open $\epsilon$-ball in  $\Sym_0(n)$ with center $D_\lambda$. We claim
that for small $\epsilon'<\epsilon$ we have
$$
 B_{\epsilon'}(D_\lambda)\cap F(\m)=
B_{\epsilon'}(D_\lambda) \cap 
\big( {\rm SO}(n)\cdot B_{\epsilon'}^{{\rm Fl}_\lambda}(D_\lambda)\big)\,.
$$ 
% To this end,
%let $U_\lambda(\m)$ be
%a small open neigbhourhood of $D_\lambda$ in  $ {\rm %SO}(n)\cdot B_\epsilon^{{\rm Fl}_\lambda}(D_\lambda)$. 
%We may assume that the normal
%exponential map of the submanifold 
%$U_\lambda(\m) \subset \Sym_0(n)$
%is a local diffeomorphism, in a possibly
%smaller neighborhood of
%$D_\lambda$ in  $ {\rm SO}(n)\cdot B_\epsilon^{{\rm %Fl}_\lambda}(D_\lambda)$, 
%still denoted by $U_\lambda(\m)$. 
%Notice that we may assume
%that $U_\lambda(\m)$ is given by elements
%$A \cdot D_{\lambda'}\cdot A^T$, $D_{\lambda'}\in %B^{{\rm Fl}_\lambda}_{\epsilon'}(D_\lambda)$  and 
%$A \in {\rm SO}(n)$ (uniformly) closed to the %identity.
Suppose on the contrary that there
would exist a sequence $(S_k)_{k \in \NN} \in F(\m)$ with
$\lim_{k\to \infty} S_k=D_\lambda$
but $S_k \not\in  {\rm SO}(n)\cdot B_{\epsilon}^{{\rm Fl}_\lambda}(D_\lambda)$ 
for all $k \in \NN$. 
Using the ${\rm SO}(n)$-action, we can find $A_k\in {\rm SO}(n)$ converging to the identity so that the matrices
 $S_k':= A_k \cdot  S_k \cdot A_k^T$ converge to $D_{\lambda}$, and satisfy $S_k'-D_{\lambda}\in \nu_{D_{\lambda}}F(\mathfrak{m})$. Then we can find $B_k\in {\rm SO}_{\mathfrak{m}}(n)$ so that $\hat{S}_k:=B_k\cdot S_k'\cdot B_k^{T}$ is diagonal and $\lim_{k\to \infty}\hat{S}_k-D_{\lambda}=\lim_{k\to \infty}B_k\cdot(S_k'-D_{\lambda})\cdot B_k^T=0$.
 By assumption we cannot have
 $\hat S_k \in B_{\epsilon}^{{\rm Fl}_\lambda }(D_\lambda)$ for all $k \in \NN$. 
 But that implies that
 $\hat S_k$ must have at least $L+1$ pairwise
 distinct eigenvalues, contradicting the definition
 of $F(\m)$. This shows the first claim. The second claim is obvious.
\end{proof}
\begin{cor}\label{cor:normalspace}
   For any $S$ sufficiently close to $D_{\lambda}$, there exists $A$ close to the identity so that $A\cdot S\cdot A^{T}\in \nu_{D_{\lambda}}F(\mathfrak{m})$.
\end{cor}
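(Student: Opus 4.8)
The plan is to deduce the corollary from the orthogonal decomposition of $\Sym_0(n)$ in Lemma \ref{lem:normalorbit}, by a short inverse–function–theorem argument (equivalently, by the slice theorem for the $\SO(n)$-action). Let $\mathcal B\subset\Sym_0(n)$ denote the subspace of matrices that are block–diagonal with block sizes $m_1,\dots,m_L$, i.e.\ the direct sum of the first two summands in Lemma \ref{lem:normalorbit}; note that $\mathcal B$ contains $D_\lambda$ and that $\mathcal B = D_\lambda + \big(\nu_{D_\lambda}F(\m) \oplus \{\mathrm{Diag}(\mu_1 I_{m_1},\dots,\mu_L I_{m_L}) : \sum_j m_j\mu_j = 0\}\big)$. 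First I would introduce the smooth map
\[
\Psi\colon \SO(n)\times\mathcal B\longrightarrow\Sym_0(n),\qquad \Psi(A,B)=A^{T}\,B\,A,
\]
for which $\Psi(I_n,D_\lambda)=D_\lambda$, and observe that it suffices to show that the image of $\Psi$ contains a neighbourhood of $D_\lambda$: then any $S$ close to $D_\lambda$ can be written as $S=\Psi(A,B)=A^{T}BA$ for some $(A,B)$ close to $(I_n,D_\lambda)$, whence $A\,S\,A^{T}=B\in\mathcal B$ with $A$ close to $I_n$.

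The key step is the computation of the differential of $\Psi$ at $(I_n,D_\lambda)$,
\[
d\Psi_{(I_n,D_\lambda)}(T,b)=[D_\lambda,T]+b,\qquad T\in\so(n),\ b\in\mathcal B,
\]
together with the observation — already carried out in the proof of Lemma \ref{lem:normalorbit} — that $\{[D_\lambda,T]:T\in\so(n)\}$ is exactly the third, off–block–diagonal summand $T_{D_\lambda}O_\lambda$; here one uses crucially that $\lambda_{M_i}\neq\lambda_{M_j}$ for $i\neq j$, so that every off–block–diagonal matrix arises as some $[D_\lambda,T]$. Since $T_{D_\lambda}O_\lambda$ together with $\mathcal B$ spans all of $\Sym_0(n)$, the differential $d\Psi_{(I_n,D_\lambda)}$ is surjective, so $\Psi$ is a submersion near $(I_n,D_\lambda)$, hence an open map there, and its image contains a neighbourhood of $D_\lambda$. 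Finally, writing the resulting $B\in\mathcal B$ as $B=D_{\lambda'}+N$ with $D_{\lambda'}$ its diagonal part and $N\in\nu_{D_\lambda}F(\m)=\nu_{D_{\lambda'}}F(\m)$ yields the stated normal form.

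The point I expect to need the most care is precisely the choice of slice. Because conjugation preserves the eigenvalues of $S$, one cannot in general land in $D_\lambda+\nu_{D_\lambda}F(\m)$ itself, but only in $D_{\lambda'}+\nu_{D_{\lambda'}}F(\m)$ for the nearby $\lambda'$ determined by the eigenvalue clusters of $S$; this is why $\Psi$ must be set up on the full block–diagonal space $\mathcal B$ (the normal space of the \emph{orbit} $O_\lambda$) rather than on $\nu_{D_\lambda}F(\m)$ alone — restricted to the latter the differential misses the ``flag'' summand and $\Psi$ fails to be a submersion. Everything else is routine, and as an alternative one may simply invoke Palais' slice theorem for the isometric $\SO(n)$-action on $\Sym_0(n)$, noting that by Lemma \ref{lem:normalorbit} its normal slice at $D_\lambda$ is exactly $\mathcal B$.
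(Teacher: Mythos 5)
Your proof is correct, and it supplies an argument where the paper supplies none: the corollary is stated without proof as a byproduct of Lemma \ref{lem:submanifold}, and even inside that lemma's proof the corresponding assertion (``we can find $A_k$ converging to the identity so that $S_k'-D_\lambda\in\nu_{D_\lambda}F(\m)$'') is simply invoked as a consequence of the $\SO(n)$-action being polar. Your inverse-function-theorem argument for the map $\Psi(A,B)=A^TBA$ on $\SO(n)\times\mathcal B$ is exactly the slice-theorem mechanism made explicit, and the surjectivity of $d\Psi_{(I_n,D_\lambda)}$ does follow from the computation in Lemma \ref{lem:normalorbit} showing that $\{[D_\lambda,T]:T\in\so(n)\}$ is precisely the off-block-diagonal summand. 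You are also right to flag, and to repair, the imprecision in the statement itself: since conjugation preserves eigenvalues, $A\cdot S\cdot A^T$ cannot literally lie in the linear space $\nu_{D_\lambda}F(\m)$ of traceless-block matrices when $D_\lambda\neq 0$; what is true (and what the paper actually uses, e.g.\ in Proposition \ref{prop:transfaces}, where $S'$ is subsequently diagonalized by an element of ${\rm SO}_{\m}(n)$) is that $A\cdot S\cdot A^T$ lands in the full block-diagonal slice $\mathcal B$, i.e.\ in $D_{\lambda'}+\nu_{D_{\lambda'}}F(\m)$ for a nearby block-scalar part $D_{\lambda'}$. Your observation that restricting the target to $D_\lambda+\nu_{D_\lambda}F(\m)$ would destroy the submersion property is the right way to see why the corrected formulation is forced. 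In short: same underlying geometry as the paper intends, but you have actually carried out the proof and cleaned up the statement.
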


\subsection{Low-dimensional cases}\label{sec:Casen2}

In case $n=2$,
the function $H:\mathbb{R}^2\to \Sym_0(2)=\RR^2$ with 
 \begin{eqnarray*}
  H(a,b):=\left( \begin{array}{cc} a & b \\ b &-a \end{array}\right)\,
\end{eqnarray*}
is a diffeomorphism. The norm of the matrix $H(a,b)$ in $\Sym_0(2)$ is 
$\sqrt{2(a^2+b^2)}$. We write $(a,b)=r\cdot (\cos(\varphi),\sin(\varphi))$
for $\varphi \in [0,2\pi)$ and $r=\sqrt{a^2+b ^2} $. 
The eigenvalues at $H(a,b)\in \RR^2$ are $r,-r$;
if $r>0$ and $\varphi \in (0,\pi) \cup (\pi, 2\pi)$, then the corresponding eigenvectors are given by 
\begin{align*}
   (1+\cos(\varphi),\sin(\varphi)), \qquad  (1-\cos(\varphi),\sin(\varphi)),
\end{align*}
while if $\varphi=0,\pi$, eigenvectors are given by $(1,0)$ and $(0,1)$. 
It follows that we cannot choose the eigenvectors at $(0,0)$ in a continuous manner. Even more interestingly, we cannot continuously choose the eigenvectors
on $\RR^2\backslash \{(0,0)\}$ globally. 

All possible eigenvalue multiplicities are $\m_2=(2)$ and $\m_0=(1,1)$.
The origin is a \mbox{$0$-dimensional} face, corresponding to $\m_2$. All other (principal) orbits are distance spheres, $\Sym_0(2)/{\rm SO}(2)=[0,\infty)$ and 
$\overline{S}(\m_1)$ equals to $\bS=\{\pi(\text{diag}(\frac{1}{\sqrt{2}},-\frac{1}{\sqrt{2}}))\}$. The eigenvalues themselves do depend continuously on $(a,b)\in \mathbb{R}^2$, but not smoothly at $(0,0)$. 

In case $n=3$, $\Sym_0(3)=\RR^5$, and all possible eigenvalue multiplicities are
\begin{align*}
    \m_5=(3), \qquad \m_2^1=(2,1), \qquad \m_2^2=(1,2),  \qquad \text{and} \ \m_0=(1,1,1). 
\end{align*}
The origin in $\Sym_0(3)=\RR^5$ is a \mbox{$0$-dimensional} face, corresponding to $\m_5$.
The spherical face $S(\m_0)$ is the principal face in $S(\Sym_0(3))$. Notice that the action of
${\rm SO}(3)$ of on $S(\Sym_0(3))$
is cohomogeneity one and the principal isotropy group is
 $\mathbb{Z}_2\times \mathbb{Z}_2$ (diagonal matrices with diagonal elements $\pm 1$), 
 so the principal orbits are diffeomorphic to ${\rm SO(3)}/\mathbb{Z}_2\times \mathbb{Z}_2$. 
On the singular orbit $S(\m_2^1)=\RP^2$ the first two eigenvalues agree,
 whereas  on  the singular orbit 
 $S(\m_2^2)=\RP^2$ the last two eigenvalues  are the same.
The faces $S(\m_2^1)$ and  $S(\m_2^2)$ are in the closure of the principal face $S(\m_4)$.
In dimension three there are precisely two singular faces of positive dimension, each
being a cone over a singular orbit. 

If $n=4$, all possible eigenvalue multiplicities are
\begin{align*}
  \m_8=(4), \qquad \m_5^1=(3,1), \qquad \m_5^2=(1,3), \qquad  \m_4=(2,2),\\
  \m_2^1=(2,1,1), \qquad \m_2^2=(1,2,1), \qquad \m_2^3=(1,1,2), \qquad  \m_0=(1,1,1,1).
\end{align*}
The space $\Sym_0(4)$ is $9$-dimensional
and the  orbit space  $\Sym_0(4)/{\rm SO}(4)$
has dimension $3$, thus $\bar S$ has
dimension $2$; it is a spherical triangle
with three edges corresponding to
$\mathfrak{m}_2^i$, $i=1,2,3$. The intersection points of these edges 
 correspond to $\mathfrak{m}_5^1$, $\mathfrak{m}_5^2$, and $\mathfrak{m}_4$, respectively. The interior of $\bar S$
corresponds to $\m_0$, of course.

\section{Riemannian deformations which preserve the volume form}
\label{sec:preliminaries}

As before, we fix a smooth  Riemannian metric $g_0 \in \Met$ on $M$ and denote by $\mu := \mu_{g_0}$ its volume density. 
In Definition \ref{def:globalgen}
we will define generic volume-preserving deformations $h \in T_{g_0}
\Metmu$. Considering $h$ as an
endomorphism $H \in \Sym_0(TM)$ (see \eqref{eqn:H}) it is sufficient to define generic 
endomorphisms $H \in \Sym_0(TM)$.
This is done by defining the singular 
set $\Sing(M) \subset \Sym_0(TM)$
in Definition \ref{def:SingM},
a compact Whitney stratified subspace. We subsequently define generic endomorphisms as those which always intersect $\Sing(M)$ transversally.
In Theorem \ref{thm:generic}, we will then show that the set of generic deformations is open and dense
in the $C^\infty$-topology. This is to be expected, since the work of Trotman \cite{Tro1} implies that the set of
smooth maps $F:X\to Y$ which transversally intersect a closed Whitney stratified space 
$W \subset Y$ are open and dense.

\subsection{The singular set $\Sing(n)$}

In order to define the singular
set $\Sing(M)$ in Subsection \ref{subsec:SingM} we define 
the  singular set $\Sing(n)\subset \Sym_0(n)$
and show that it is a closed Whitney stratified subspace.

\begin{defin}
Let $n\geq 2$. We denote by
$$
 \Sing(n):= \bigcup\,\,\,
 \big\{ F(\m):\m=(m_1,\cdots ,m_L), \,\,L<n\big\}
$$
the \emph{singular set} of $\Sym_0(n)$.
\end{defin}

By the very definition of the strata
$F(\m)$ (Definition \ref{definF}), the set $\Sing(n)$ consists
precisely of those symmetric and traceless matrices which do not have pairwise
distinct eigenvalues. Notice now,
that by Lemma \ref{lem:submanifold}
$$
   \Sing(n)= \cl(F(\m_0))\backslash F(\m_0)\,,
$$
where $\mathfrak{m}_0=(1,\cdots,1)$ corresponds to the (open) principal face (of codimension $0$).
As a consequence,
the singular set $\Sing(n)$ is a closed subset of $\Sym_0(n)$ with a stratification
given by the singular faces $F(\m)$. Moreover, singular faces have codimension at least two, equality achieved for 
the eigenvalue multiplicities of type 
$$
\m_2^1=(2,1,\cdots ,1),\dots,\m_2^{n-1}(1,\cdots ,1,2).
 $$
Recall the following from \cite{Tro2} (for example): 

\begin{defin}\label{WTD}
Let $X,Y$ be two locally closed and disjoint submanifolds in $\mathbb{R}^m$ of dimensions $i\geq 0$ and $j\geq 0$, respectively. 
 Whitney's transversality conditions for  $X$ and $Y$ are the following:
\begin{itemize}
   \item[(a)] 
   If 
   $\{x_k\}_{k=1}^{\infty}\subset X$ has $\lim_{k\to \infty}x_k=y\in Y$ and $T_{x_k}X$ converges to an $i$-dimensional subspace $T$, then $T_y Y\subseteq T$ (in particular, $j\le i$).
     \item[(b)] 
     If $
     \{x_k\}_{k=1}^{\infty}\subset X$ and $\{y_k\}_{k=1}^{\infty}\subset Y$ satisfy 
    \begin{itemize}
        \item[$\bullet$] $\lim_{k\to \infty}x_k=\lim_{k\to \infty}y_k=y\in Y$,
        \item[$\bullet$] the secant lines $L_k$ between $x_k$ and $y_k$ converge to a line $L$,
        \item[$\bullet$] $T_{x_k}X$ converges to an $i$-dimensional subspace $T$,
    \end{itemize}
    then $L$ is contained in $T$. 
\end{itemize}
\end{defin}

We now show that faces $F(\m)$ and $F(\tilde \m)$ with $\m < \tilde \m$,
called adjacent faces,
satisfy Whitney's transversality condition (a).
To this end, we first give a related definition. 
\begin{defin} 
We say that the subspace $V\subset \Sym_0(n)$ intersects the face $F(\m)$ in a point
$S\in F(\mathfrak{m})$ transversally if $\Sym_0(n)=T_S F(\mathfrak{m})+V$. 
\end{defin}

We denote by $B_\epsilon(S)$ the open ball in $\Sym_0(n)$ of radius $\epsilon>0$ around $S$.

\begin{prop}                    \label{prop:transfaces}
Let $F(\m)$ be a singular face 
with $F(\m) \subset \cl(F(\tilde \m))$.  Furthermore,
let  $V \subset \Sym_0(n)$ be 
a subspace, which intersects $F(\m)$
transversally at a diagonal matrix $D_\lambda \in F(\m)$. Then there exists
$\epsilon_V>0$ such that 
$V$ intersects $F(\tilde \m)$ transversally on $B_{\epsilon_V}(D_\lambda) \cap F(\tilde \m)$.
\end{prop}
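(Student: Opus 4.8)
The plan is to exploit the $\SO(n)$-equivariance of the faces together with the explicit normal-space formula \eqref{normalspace} of Lemma \ref{lem:submanifold}, transporting the question at a nearby point of $F(\tilde\m)$ back to $D_\lambda\in F(\m)$, and then to finish by the openness of transversality. We may assume $\m<\tilde\m$ (if $\m=\tilde\m$ the assertion is merely openness of transversality for $F(\m)$ itself), and by Lemma \ref{lem:closure} we fix the non-decreasing surjection $J\colon\{1,\dots,\tilde L\}\to\{1,\dots,L\}$ witnessing $\m\le\tilde\m$, so $m_i=\sum_{j\in J^{-1}(i)}\tilde m_j$. \emph{Step 1 (coarse block-diagonalisation near $D_\lambda$).} Given $S'\in F(\tilde\m)$ close to $D_\lambda$, apply Corollary \ref{cor:normalspace} (for the face $F(\m)$) to get $A=A(S')\in\SO(n)$ with $AS'A^T\in D_\lambda+\nu_{D_\lambda}F(\m)$, i.e.\ $\tilde S:=AS'A^T=\diag(\tilde S_1,\dots,\tilde S_L)$ is block diagonal for the partition $\m$, with $\tilde S_i\in\Sym(m_i)$ close to $\lambda_iI_{m_i}$. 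Crucially, $A$ may be taken as close to $\mathrm{id}$ as we wish by shrinking the neighbourhood of $D_\lambda$: this is the continuity in $S'$ of the spectral projections of $S'$ onto the $L$ well-separated eigenvalue clusters near $\lambda_1,\dots,\lambda_L$ (their mutual separation being bounded below by the constant $r$), since these projections are the coordinate projections at $S'=D_\lambda$.

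\emph{Step 2 (the normal space of $F(\tilde\m)$ at $\tilde S$ lies inside $\nu_{D_\lambda}F(\m)$).} Since $\tilde S\in F(\tilde\m)$ is close to $D_\lambda$, each block $\tilde S_i$ has eigenvalue multiplicities $(\tilde m_j)_{j\in J^{-1}(i)}$; diagonalising every $\tilde S_i$ by some $B_i\in\Or(m_i)$ (and, if we wish, sorting within each block) and setting $B:=\diag(B_1,\dots,B_L)\in\Or_\m(n)$, we obtain $B\tilde SB^T=D_{\lambda'}\in F(\tilde\m)$ whose $\tilde\m$-eigenspaces are coordinate subspaces refining the coarse coordinate blocks of $D_\lambda$. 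Hence, by \eqref{normalspace},
\[
  \nu_{D_{\lambda'}}F(\tilde\m)=\bigl\{\tilde\m\text{-block-diagonal, traceless on each }\tilde\m\text{-block}\bigr\}\ \subseteq\ \bigl\{\m\text{-block-diagonal, traceless on each }\m\text{-block}\bigr\}=\nu_{D_\lambda}F(\m),
\]
the inclusion holding because a $\tilde\m$-block-diagonal matrix is $\m$-block-diagonal and its trace over each coarse block is the sum of the (vanishing) traces over the finer blocks. As $\nu_{D_\lambda}F(\m)$ is invariant under conjugation by $\Or_\m(n)$, we get $\nu_{\tilde S}F(\tilde\m)=B^T\nu_{D_{\lambda'}}F(\tilde\m)B\subseteq\nu_{D_\lambda}F(\m)$, and therefore, conjugating by $A$,
\[
  \nu_{S'}F(\tilde\m)=A^T\,\nu_{\tilde S}F(\tilde\m)\,A\ \subseteq\ A^T\,\nu_{D_\lambda}F(\m)\,A,\qquad\text{equivalently}\qquad T_{S'}F(\tilde\m)\ \supseteq\ A^T\,T_{D_\lambda}F(\m)\,A.
\]

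\emph{Step 3 (openness of transversality) and the main obstacle.} By hypothesis $V+T_{D_\lambda}F(\m)=\Sym_0(n)$. Since the subspaces $W\subset\Sym_0(n)$ of dimension $\dim F(\m)$ with $V+W=\Sym_0(n)$ form an open subset of the Grassmannian, there is $\delta>0$ such that $V+W=\Sym_0(n)$ whenever $\dist\bigl(W,T_{D_\lambda}F(\m)\bigr)<\delta$. Now choose $\epsilon_V>0$ so small that, by Step 1, $A(S')$ is close enough to $\mathrm{id}$ to force $\dist\bigl(A(S')^T\,T_{D_\lambda}F(\m)\,A(S'),\,T_{D_\lambda}F(\m)\bigr)<\delta$ for all $S'\in B_{\epsilon_V}(D_\lambda)\cap F(\tilde\m)$; then $V+A(S')^T\,T_{D_\lambda}F(\m)\,A(S')=\Sym_0(n)$, and by Step 2 this forces $V+T_{S'}F(\tilde\m)=\Sym_0(n)$, which is the asserted transversality. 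The only genuinely delicate point is the uniformity in Step 1 — that $A(S')\to\mathrm{id}$ as $S'\to D_\lambda$ — because this is precisely what upgrades the pointwise transversality at $D_\lambda$ to transversality on a whole neighbourhood and produces the radius $\epsilon_V$; it rests on the stability of the spectral subspaces of the well-separated clusters (this is where the gap $r$ enters), while the rest is bookkeeping with the block decompositions and the normal-space description of Lemma \ref{lem:submanifold}.
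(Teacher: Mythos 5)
Your proof follows the same route as the paper's: both first treat diagonal elements $D_{\tilde\lambda}\in F(\tilde\m)\cap{\rm Fl}$ near $D_\lambda$ via the normal-space formula \eqref{normalspace}, then reduce the general case by conjugating with $A$ close to the identity (Corollary \ref{cor:normalspace}) and a block element $B\in{\rm SO}_\m(n)$, using invariance of $\nu_{D_\lambda}F(\m)$ under ${\rm SO}_\m(n)$ to get $\nu_{S'}F(\tilde\m)\subseteq\nu_{D_\lambda}F(\m)$, and finish by continuity/openness of transversality. Your write-up is merely more explicit about the Grassmannian openness step and the role of the spectral gap $r$, but the argument is the same.
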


\begin{proof}
Let $D_\lambda \in F(\m)$. Firstly,
we consider diagonal elements $D_{\tilde \lambda}\in F(\tilde \m) \cap {\rm Fl}$ 
which are close to
$D_\lambda$; we do not insist
on an ordering of the eigenvalues.
Notice that
the eigenvalues $(\tilde \lambda_1,...,\tilde \lambda_n)$ of
$D_{\tilde \lambda}$ 
still inherit the
eigenvalue clustering of $\lambda$
in the following sense:
we have $\tilde \lambda_{i}< \tilde \lambda_{j}$
for all $i \in (1,m_1)$
and all $j\in (m_1+1,m_1+m_2)$
and so on. 
Notice now that Lemma
\ref{lem:submanifold} also holds for
$D_{\tilde \lambda}$. This then
shows that 
\begin{align}\label{normalfacecontainment}
\nu_{D_{\tilde \lambda}}F(\tilde \m)\subseteq \nu_{D_{\lambda}}F(\mathfrak{m}).
\end{align}
Clearly, the claim
follows in this case.
For arbitrary 
points $S \in B_{\epsilon_V}(D_\lambda) 
\cap F(\tilde \m) $, we can find $A\in  {\rm SO}(n)$ close to the identity so that $S':=ASA^{T}\in \nu_{D_{\lambda}}(\mathfrak{m})$ (Corollary \ref{cor:normalspace}).
Now since $ {\rm SO}_{\mathfrak{m}}(n)$ is the isotropy of $D_\lambda$, we can find $B\in {\rm SO}_{\mathfrak{m}}(n)$ so that
$B S'B^{T}=D_{\tilde{\lambda}}\in F(\tilde{\mathfrak{m}})\cap {\rm Fl}$ for some $B\in {\rm SO}_{\mathfrak{m}}(n)$. Note that $B$ is not necessarily close to the identity, but $D_{\tilde{\lambda}}$ is close to $D_{\lambda}$. Containment \eqref{normalfacecontainment} follows. Since $\nu_{D_{\lambda}}F(\mathfrak{m})$ is invariant under the action of ${\rm SO}_{\mathfrak{m}}(n)$, we find that $\nu_{S'}F(\tilde \m)\subseteq \nu_{D_{\lambda}}F(\mathfrak{m})$.
The claim then follows from the fact that $\nu_{S}F(\tilde \m)$ is close to $\nu_{S'}F(\tilde \m)$ (since $A$ is close to the identity). 
\end{proof}

As a direct consequence of Proposition \ref{prop:transfaces}, we find that 
 adjacent faces $F(\mathfrak{m})$ and $F(\tilde{\mathfrak{m}})$ satisfy Whitney's condition (a).

We now turn to Whitney's condition (b).
It has been noticed by Mather that (a) is implied by (b), but the converse is not always true. Instead, condition (a) implies condition (b), provided certain technical requirements hold. We now discuss these technicalities.
\begin{defin}
 Let $X$ and $Y$ be locally closed and disjoint submanifolds of $\RR^m$ with $Y \subset \cl(X)\backslash X$.
Let $\pi_Y:\nu_Y\to Y$ be a $C^\infty$-tubular neighborhood of $Y$ in $\RR^m$.
Then $(X,Y)$ is called $b^{\pi_Y}$-regular at $y \in Y$, if whenever
\begin{itemize}
    \item $\{x_k\}_{k=1}^{\infty}\subset X$ has a
limit $y\in Y$, 
\item the secant lines $L_k$ joining $x_k$ and $\pi_Y(x_k)$ converge to $L$, and 
\item $T_{x_k}X$ converges to a limit plane $P$,
\end{itemize}
 we can conclude that $L \subset P$.
\end{defin}

In the next proposition we will use distance tubes around a face to conclude that our faces satisfy Whitney's condition (b). 
 \begin{prop} \label{prop:conditionb}
Let $F(\m)$ be a singular face, which is contained
in the closure of a face $F(\tilde \m)$.  Then Whitney's condition {\rm (b)}
holds true for the pair $(F(\tilde \m),F(\m))$.
\end{prop}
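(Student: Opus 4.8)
The plan is to prove Whitney's condition (b) for the pair $(F(\tilde\m), F(\m))$ by exploiting the homogeneity of the faces under the $\SO(n)$-action together with a rescaling argument, and by using that (a) already holds (Proposition \ref{prop:transfaces}). Concretely, I would use the standard criterion (due to Mather, see \cite{Tro2}): if (a) holds along a face and each face is invariant under a group action that is transitive on the normal sphere bundle of the smaller face — or more precisely, if the pair is $b^{\pi_Y}$-regular with respect to \emph{some} $C^\infty$-tubular neighbourhood — then (b) holds. So the real content is to produce a good tubular neighbourhood of $F(\m)$ for which the secant-line computation can be carried out.

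First I would reduce to a model computation near a diagonal matrix $D_\lambda \in F(\m)$, using the $\SO(n)$-equivariance: since both $F(\m)$ and $F(\tilde\m)$ are $\SO(n)$-invariant, and since $\SO(n)\cdot B_\epsilon^{\mathrm{Fl}_\lambda}(D_\lambda)$ is an open neighbourhood of $D_\lambda$ in $F(\m)$ (Lemma \ref{lem:submanifold}), it suffices to verify condition (b) at points $D_\lambda$, taking sequences $x_k \in F(\tilde\m)$ with $x_k \to D_\lambda$. For the tubular neighbourhood I would take the distance tube around $F(\m)$ coming from the flat metric on $\Sym_0(n)$: by Lemma \ref{lem:submanifold} the normal space $\nu_{D_\lambda}F(\m)$ is exactly the block-diagonal traceless symmetric matrices $\{(S_1,\dots,S_L)\}$, so $\pi_{F(\m)}$ near $D_\lambda$ is (after applying Corollary \ref{cor:normalspace} to move $x_k$ into normal position) essentially orthogonal projection onto $\mathrm{Fl}_\lambda$ plus the off-diagonal-block directions in $T_{D_\lambda}F(\m)$.

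Next, the key step: take $x_k \in F(\tilde\m)$ with $x_k \to D_\lambda$, $T_{x_k}F(\tilde\m) \to P$, and the secant lines $L_k$ joining $x_k$ to $\pi_{F(\m)}(x_k)$ converging to $L$; I must show $L \subset P$. Using the group action as above, normalize so that $x_k \to D_\lambda$ with $x_k - \pi_{F(\m)}(x_k) \in \nu_{D_\lambda}F(\m)$, i.e. $x_k$ differs from a point of $F(\m)$ by a small block-diagonal traceless symmetric perturbation; in fact, conjugating by elements of $\SO_\m(n)$, one can arrange $x_k = D_{\tilde\lambda_k}$ to be diagonal with $D_{\tilde\lambda_k}\to D_\lambda$, so that $\pi_{F(\m)}(x_k) \in \mathrm{Fl}_\lambda$ and the secant direction $L_k$ lies in $\mathrm{Fl}$ (the diagonal traceless matrices). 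Thus $L \subset \mathrm{Fl}$. On the other hand, $T_{x_k}F(\tilde\m)$ always contains $\mathrm{Fl}_{\tilde\lambda_k}$ (the diagonal directions preserving the eigenspace decomposition of $x_k$); passing to the limit, $\mathrm{Fl}_\lambda \subseteq P$ by condition (a) — indeed (a) gives $T_{D_\lambda}F(\m)\subseteq P$, and $\mathrm{Fl}_\lambda \subseteq T_{D_\lambda}F(\m)$. So it remains to check that the limiting secant line $L$, which lies in $\mathrm{Fl}$, actually lies in $\mathrm{Fl}_\lambda \subseteq P$: this follows because the foot point $\pi_{F(\m)}(x_k)$ lies in $\mathrm{Fl}_\lambda$ and $x_k$, after the normalization, has its displacement from the foot point in the normal directions $\nu$, but the \emph{limiting} direction of the rescaled displacement must be compatible with staying in the stratum $F(\tilde\m)$ up to first order — more carefully, one writes $x_k = \pi_{F(\m)}(x_k) + r_k v_k$ with $r_k\to 0$, $v_k \to v$ a unit normal vector, and uses that the curve $s\mapsto \pi_{F(\m)}(x_k) + s v_k$ can be compared, to leading order, with a curve inside $\cl(F(\tilde\m))$ through $x_k$, whose velocity lies in $T_{x_k}F(\tilde\m)$.

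The main obstacle I anticipate is precisely this last comparison: relating the limiting secant line $L$ to the limit tangent plane $P$ when the displacement is genuinely in the \emph{normal} directions to $F(\m)$ (the block $\Sym_0(m_i)$-directions), since a priori these are not tangent to $F(\tilde\m)$ at $D_\lambda$. The resolution should come from the conical structure: by Lemma \ref{lem:submanifold}, near $D_\lambda$ the stratum $F(\tilde\m)$ is (after the $\SO_\m(n)$-normalization) a product of $\mathrm{Fl}_\lambda$ with cones $F(\tilde\m_i)\subset \Sym_0(m_i)$ over the orbit data within each block, and a cone $C$ over a compact set is always $b$-regular over its vertex with respect to the radial tubular neighbourhood (the secant line from a point $x$ to the vertex is the radial line, which is tangent to the cone). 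So after peeling off the $\mathrm{Fl}_\lambda$-factor (handled by condition (a)) and the off-diagonal-block tangent directions, condition (b) reduces to the cone case in each $\Sym_0(m_i)$-block, which is elementary. Assembling these — the $\mathrm{Fl}_\lambda$ part via (a), the off-diagonal part via (a), and the block-diagonal part via the cone argument — yields $L \subset P$ and hence Whitney's condition (b).
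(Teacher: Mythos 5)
Your proposal correctly identifies the two pillars of the paper's argument — Trotman's result from \cite{Tro2} that condition (a) together with $b^{\pi_Y}$-regularity of some $C^\infty$-tubular neighbourhood implies (b), and the idea that the conical/radial structure of the faces makes the distance tube do the job — so the overall strategy is the same. However, your write-up is not carried to completion, and the decomposition you set up in the final paragraph obscures rather than clarifies the key mechanism. The paper's argument is substantially more direct: fix $D_\lambda \in F(\m)$, take $S_k \in F(\tilde\m)$ with $S_k \to D_\lambda$, and normalize the \emph{foot point} $T_k := \pi_{F(\m)}(S_k)$ to be diagonal by conjugating with a single $A_k \in {\rm SO}(n)$ (legitimate since $F(\m)$, $F(\tilde\m)$ and the flat metric are ${\rm SO}(n)$-invariant). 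Then $S_k - T_k \in \nu_{T_k}F(\m)$ is, by Lemma \ref{lem:submanifold}, a block-diagonal traceless symmetric matrix, and for $s \in (0,1]$ the matrix $T_k + s(S_k - T_k)$ has exactly the same per-block eigenvalue multiplicities as $S_k$: adding a multiple of the identity and scaling by $s>0$ preserve multiplicities within each block, and distinct blocks stay separated once $S_k$ is close enough to $T_k$. Hence the whole open segment from $T_k$ to $S_k$, not containing $T_k$, lies in $F(\tilde\m)$, so $L_k \subset T_{S_k}F(\tilde\m)$ and $L\subset P$ by taking limits. No peeling or assembly step is required.

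Your version has two concrete problems. First, conjugating by $B_k \in {\rm SO}_{\m}(n)$ to make $S_k$ itself diagonal is unnecessary and introduces a subtlety you do not address: as pointed out in the proof of Proposition \ref{prop:transfaces}, such $B_k$ need not converge, so one would have to pass to a subsequence and track how the limit plane $P$ is transported, which you do not do. Second, once the foot point $T_k$ is diagonal (hence in ${\rm Fl}_\lambda$), the displacement $S_k-T_k$ lies \emph{entirely} in $\nu_{T_k}F(\m)$, so there is no ${\rm Fl}_\lambda$-component and no off-diagonal orbit component to ``peel off''; the three-way decomposition in your final paragraph is a red herring, and invoking condition (a) for the first two parts does no work. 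The crux is exactly the radial observation you state at the very end — the secant from a cone point to the vertex is the radial direction, which is tangent to the cone — but it should be applied directly, as above, rather than as one piece of an assembly that is never verified to fit together.
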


\begin{proof}
The results of \cite{Tro2} imply that, since the pair $(F(\tilde \m),F(\m))$ satisfies condition (a) of Definition \ref{WTD}, it will also satisfy condition (b) if there is a distance tube
around $F(\m)$ which is $(b^{\pi_Y})$-regular. 
Let $D_\lambda \in F(\m)$ and $(S_k)_{k \in \NN}$ be a sequence in $F(\tilde \m)$ with
$S_k \to D_\lambda$ for $k\to \infty$. Let $T_k=\pi_{F(\m)}(S_k)$ denote the
orthogonal projection of $S_k$ onto $F(\m)$. Then the line $L_k$ through
$S_k$ and $T_k$  lies in the normal space $\nu_{T_k}F(\m)$. 
Moreover, for $S_k$ close to $D_\lambda$
the open segment through $S_k$ and $T_k$, not containing $T_k$,
lies in $F(\tilde \m)$ by Lemma \ref{lem:submanifold}. Indeed, if $T_k$ is diagonal then this is obvious
and for nondiagonal $T_k$ we may use the isometric action of ${\rm SO}(n)$ to diagonalize $T_k$.
It follows that $L_k \in T_{S_k}F(\tilde \m)$.
This shows the claim.
\end{proof}

We thus arrive at the first main result of this section.

\begin{thm}\label{thm:whitney}
The union $\Sing(n)$ of  singular faces is a closed, Whitney-stratified subspace of
$\Sym_0(n)$ of codimension two. The strata are the singular faces.
\end{thm}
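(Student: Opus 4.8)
The plan is to assemble Theorem~\ref{thm:whitney} from the pieces already in place. First I would recall that by Lemma~\ref{lem:submanifold} each face $F(\m)$ is an embedded submanifold of $\Sym_0(n)$, and that these submanifolds are pairwise disjoint (two diagonalizable traceless matrices with different multiplicity vectors cannot coincide). The decomposition $\Sing(n)=\bigcup\{F(\m): L<n\}=\cl(F(\m_0))\setminus F(\m_0)$ established just before the theorem shows $\Sing(n)$ is a closed subset of $\Sym_0(n)$ which is partitioned into the strata $F(\m)$ with $\m\neq \m_0$. By Lemma~\ref{lem:closure} (valid for the $F(\m)$ by Lemma~\ref{lem:submanifold}) the frontier condition holds: $\cl(F(\tilde\m))\setminus F(\tilde\m)=\bigcup_{\m<\tilde\m}F(\m)$, so the closure of each stratum is a union of strata, which is exactly the requirement that $\{F(\m)\}$ be a stratification.

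Next I would verify the Whitney conditions (a) and (b). The only pairs of strata that interact are the adjacent pairs $F(\m)\subset \cl(F(\tilde\m))$ with $\m<\tilde\m$; for all other pairs of distinct faces the closures are disjoint (again by the frontier formula), so the conditions are vacuous. For adjacent pairs, condition (a) is the direct consequence of Proposition~\ref{prop:transfaces} noted immediately after its proof: applying that proposition with $V$ the limiting tangent space $T$ of $T_{S_k}F(\tilde\m)$ (or, more cleanly, using the containment $\nu_{D_{\tilde\lambda}}F(\tilde\m)\subseteq\nu_{D_\lambda}F(\m)$ of \eqref{normalfacecontainment} which dualizes to $T_{D_\lambda}F(\m)\subseteq T_{D_{\tilde\lambda}}F(\tilde\m)$ and persists under the $\SO(n)$-action and under taking limits) gives $T_y F(\m)\subseteq T$. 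Condition (b) for adjacent pairs is exactly Proposition~\ref{prop:conditionb}. Hence $\Sing(n)$ is a closed Whitney-stratified subspace with strata the singular faces $F(\m)$, $\m\neq\m_0$.

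Finally, for the codimension claim: by Lemma~\ref{lem:submanifold}, $\codim F(\m)=\sum_{i=1}^{L}d(m_i)$ with $d(m_i)=\tfrac{m_i(m_i+1)}{2}-1$. Since $d(1)=0$ and $d(m)\ge 2$ for $m\ge 2$, any $\m\neq\m_0$ (which must have at least one $m_i\ge 2$) satisfies $\codim F(\m)\ge 2$, with equality precisely when $\m$ has a single block of size $2$ and all others of size $1$, i.e.\ $\m=\m_2^1=(2,1,\dots,1),\dots,\m_2^{n-1}=(1,\dots,1,2)$, which occur (they require $L=n-1<n$). Thus $\codim\Sing(n)=\min_{\m\neq\m_0}\codim F(\m)=2$. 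I do not expect any genuine obstacle here, since the substantive work — smoothness of the faces, the normal-space computation, transversality of adjacent faces, and the distance-tube argument for condition (b) — is already carried out in Lemmas~\ref{lem:submanifold},~\ref{lem:closure} and Propositions~\ref{prop:transfaces},~\ref{prop:conditionb}; the only mild care needed is to phrase (a) and (b) for \emph{non-diagonal} points of the faces, which is handled (as in the proofs of those propositions) by moving the base point into $\nu_{D_\lambda}F(\m)$ via a small $\SO(n)$ rotation (Corollary~\ref{cor:normalspace}) and then diagonalizing by an element of the isotropy $\SO_{\m}(n)$, noting all the relevant spaces vary continuously under these isometric actions.
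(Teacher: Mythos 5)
Your proposal is correct and follows essentially the same route as the paper, which states Theorem~\ref{thm:whitney} without a separate proof precisely because the assembly you describe — closedness and the frontier condition from Lemma~\ref{lem:closure} and the remarks preceding the theorem, submanifold structure and the codimension formula from Lemma~\ref{lem:submanifold}, condition (a) from Proposition~\ref{prop:transfaces}, and condition (b) from Proposition~\ref{prop:conditionb} — is exactly what the authors intend. Your extra care in dualizing the normal-space containment \eqref{normalfacecontainment} to tangent spaces, and in noting that non-diagonal points are handled by the $\SO(n)$-action via Corollary~\ref{cor:normalspace}, fills in details the paper leaves implicit and is entirely consistent with the proofs of Propositions~\ref{prop:transfaces} and~\ref{prop:conditionb}.
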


\subsection{The singular set $\Sing(TM)\subset \Sym_0(TM)$}\label{subsec:SingM}

We now discuss the topological implications for the bundle of traceless and $g_0$-self-adjoint endomorphisms of the tangent bundle $TM$ of a manifold $(M,g_0)$.

\begin{defin} \label{def:SingM}
Let $(M,g_0)$ be a compact Riemannian manifold.
Then we define
$$
  \Sing(TM):=\{F \in \Sym_0(TM): F \textrm{ does not have pairwise distinct eigenvalues}\}\,.
$$
\end{defin}

\begin{lem}\label{lem:SingM}
Let $(M,g_0)$ be a compact Riemannian manifold of dimension \mbox{$n\ge 2$}.
Then $\Sing(TM)$ is a closed, Whitney-stratified subspace of $\Sym_0(TM)$ of codimension two.
\end{lem}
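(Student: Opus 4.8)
The plan is to bootstrap the local statement Theorem~\ref{thm:whitney} up to the bundle $\Sym_0(TM)$ by a standard patching argument, using local trivializations in which the structure reduces to the Euclidean model $\Sym_0(n)$. First I would observe that $\Sing(TM)$ is closed: it is the preimage of the closed set $\Sing(n) \subset \Sym_0(n)$ under any local trivialization, since ``not having pairwise distinct eigenvalues'' is a fibrewise condition that is invariant under the transition maps (the transition maps act fibrewise by conjugation $S \mapsto A S A^T$ with $A \in \SO(n)$, and this preserves eigenvalues, hence preserves $\Sing(n)$, which by Theorem~\ref{thm:whitney} and the remark preceding it is exactly the set of traceless symmetric matrices without $n$ distinct eigenvalues). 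Alternatively, closedness follows because the discriminant of the characteristic polynomial is a continuous (indeed polynomial) function on $\Sym_0(TM)$ and $\Sing(TM)$ is its zero set.

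Next I would fix an open cover $\{V_\alpha\}$ of $M$ by coordinate neighbourhoods over which $TM$ is trivial and carries a $g_0$-orthonormal frame, so that $\Sym_0(TM)|_{V_\alpha} \cong V_\alpha \times \Sym_0(n)$ as a fibre bundle with structure group $\SO(n)$ acting by conjugation. Under this identification, $\Sing(TM)|_{V_\alpha}$ corresponds to $V_\alpha \times \Sing(n)$. Since $\Sing(n) = \bigcup \{F(\m) : L < n\}$ is a Whitney stratification of $\Sym_0(n)$ with strata $F(\m)$ of codimension $\sum_i d(m_i) \geq 2$ (Lemma~\ref{lem:submanifold} and Theorem~\ref{thm:whitney}), the product $V_\alpha \times \Sing(n)$ is Whitney-stratified with strata $V_\alpha \times F(\m)$, each of codimension $\sum_i d(m_i)$ in $V_\alpha \times \Sym_0(n)$; in particular the codimension is at least two, with equality for the strata $\m_2^k = (1,\dots,2,\dots,1)$. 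Defining $\Sing(\m) \subset \Sym_0(TM)$ to be the subbundle of endomorphisms whose ordered eigenvalue multiplicities equal $\m$, these local pictures glue: the transition maps act fibrewise by conjugation, which preserves each $F(\m)$ (since conjugation preserves eigenvalues with multiplicity), hence carries $V_\alpha \times F(\m)$ diffeomorphically to $V_\beta \times F(\m)$ on overlaps. Thus $\Sing(\m)$ is a globally well-defined smooth submanifold of $\Sym_0(TM)$ of codimension $\sum_i d(m_i)$, and $\Sing(TM) = \bigcup_{L<n} \Sing(\m)$.

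The main point to verify is that the Whitney conditions (a) and (b) hold for the pairs $(\Sing(\tilde\m), \Sing(\m))$ with $\m < \tilde\m$. But the Whitney conditions are local and are invariant under diffeomorphisms, so it suffices to check them in a single local trivialization $V_\alpha \times \Sym_0(n)$, where the stratification is the product of the trivial stratification of $V_\alpha$ with the Whitney stratification of $\Sym_0(n)$ by the faces $F(\m)$. That a product of a manifold with a Whitney-stratified space is again Whitney-stratified (with strata the products) is elementary: tangent planes to $V_\alpha \times F(\m)$ split as $T V_\alpha \oplus T F(\m)$, secant lines between points of $V_\alpha \times F(\tilde\m)$ and $V_\alpha \times F(\m)$ decompose compatibly, and conditions (a), (b) for the product follow immediately from conditions (a), (b) for the pair $(F(\tilde\m), F(\m))$ in $\Sym_0(n)$, which is exactly Proposition~\ref{prop:transfaces} and Proposition~\ref{prop:conditionb} (equivalently, Theorem~\ref{thm:whitney}). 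Finally, since $M$ is compact, $\Sing(TM)$ is a compact Whitney-stratified subspace; assembling the strata $\Sing(\m)$ for $L < n$, with the minimal codimension two attained precisely on the $\m_2^k$-strata, gives the claim.

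The step I expect to be the only substantive one is checking that the local model genuinely glues --- that is, that conjugation by $\SO(n)$ really does preserve each face $F(\m)$ and the whole Whitney-stratified structure on the nose, so that the globally defined object $\Sing(\m)$ inherits its submanifold and Whitney properties from the Euclidean model without any further work. This is immediate from the description of $F(\m)$ in terms of eigenvalue multiplicities and the $\SO(n)$-invariance built into Section~5, so in the end the lemma is essentially a bundle-theoretic repackaging of Theorem~\ref{thm:whitney}, with no new analytic content; the hardest genuine work was already done in Propositions~\ref{prop:transfaces} and~\ref{prop:conditionb}.
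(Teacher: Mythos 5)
Your argument is correct and follows the same route as the paper: trivialize $\Sym_0(TM)$ locally via a $g_0$-orthonormal frame so that $\Sing(TM)$ becomes $V\times\Sing(n)$, and invoke Theorem \ref{thm:whitney}. The paper's proof is just a terser version of yours, leaving the gluing and product-stratification checks (which you carry out explicitly, and correctly) implicit.
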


\begin{proof}
Let $p \in M$ and let $x:U_p\to V \subset \RR^n$ be a chart of $M$ around $p$ such that on $U_p$
there exists a $g_0$-orthonormal basis $e$.
Let $\pi:\Sym_0(TM)\to M;F_q \to q$ denote
the projection map. Then
we obtain a chart
$$
  \Phi_x :\pi^{-1}(U_p)\to V \times \Sym_0(n)\,\,;\,\,\,
  F_q \mapsto (x(q),M_{e_q}(F_q)) 
$$
of $\Sym_0(TM)$ where 
$M_{e_q}(F_q)$ denotes the $n\times n$ matrix 
representing
the endomorphism $F_q \in \End(T_q M)$,
 $q=\pi(F_q)$,
with respect to $e_q$. Of course
$$
  \Phi_x( \pi^{-1}(U_p)\cap \Sing(M))
   = V \times \Sing(n)\,.
$$
The claim follows now from Theorem \ref{thm:whitney}.
\end{proof}

We are now finally in a position to specify the directions of Ebin geodesics in which we are most interested in this paper. 
\begin{defin}[Generic Deformation]   \label{def:globalgen}
Let $(M,g_0)$ be a compact Riemannian manifold
of dimension $n\geq 2$ with volume form $\mu=\mu_{g_0}$. 
Then we call  $h \in T_{g_0}\Metmu$ \emph{generic},
if the corresponding endomorphism
$H :M \to \Sym_0(TM) $, defined in \eqref{eqn:H},
intersects  the singular set $\Sing(TM)$  transversally. 
\end{defin}

The set of all generic directions 
$h \in T_{g_0}\Metmu$
will be denoted by 
$\Gen$. Now we come to our second main result in this section.

\begin{thm}     \label{thm:generic}
Let $M$ be a compact manifold of dimension $n \geq 2$,
$g_0$ be a Riemannian metric on $M$ 
with volume form $\mu=\mu_{g_0}$.
Then, the set of generic directions $\Gen$ is an open and dense subset of $T_{g_0}\Metmu$ in the $C^\infty$-topology.
\end{thm}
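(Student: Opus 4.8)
The plan is to deduce Theorem~\ref{thm:generic} from the parametric transversality theorem together with the stratified transversality statement already available via Trotman's work. First I would recall that by Lemma~\ref{lem:SingM} the set $\Sing(TM)$ is a closed, Whitney-stratified subspace of the total space $\Sym_0(TM)$, whose strata are the (pullbacks of the) singular faces $F(\m)$, $L<n$. By Definition~\ref{def:globalgen}, $h\in\Gen$ precisely when the section $H\colon M\to\Sym_0(TM)$ is transverse to $\Sing(TM)$, which for a Whitney-stratified target means transverse to each stratum. The key external input is that the set of smooth maps transverse to a closed Whitney-stratified subspace is open and dense in the Whitney $C^\infty$ topology; here the maps in question are sections of a fixed bundle, so I would phrase this as: the set of sections of $\Sym_0(TM)$ transverse to $\Sing(TM)$ is open and dense in $\Gamma(\Sym_0(TM))$ with the $C^\infty$ topology. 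Since $T_{g_0}\Metmu$ is identified via \eqref{eqn:H} and \eqref{Sym0TM} with $\Gamma(\Sym_0(TM))$, this gives exactly the conclusion.

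For \emph{openness} I would argue directly: $M$ is compact, each stratum $F(\m)$ of $\Sing(n)$ has codimension at least two in $\Sym_0(n)$, and transversality of the section $H$ to a closed stratified set is an open condition in the $C^1$ (hence $C^\infty$) topology because it is a local nondegeneracy condition on the $1$-jet of $H$ that, on the compact $M$, can be checked on a finite cover by the charts $\Phi_x$ of Lemma~\ref{lem:SingM}; a small $C^1$-perturbation of $H$ keeps $j^1H$ in the open set where the finitely many transversality conditions hold. (One should note the minor subtlety, already implicit in the excerpt, that because $\Sing(n)$ has codimension $\ge 2$, transversality of a section from an $n$-manifold actually forces the image to \emph{miss} the codimension-$>n$ strata and to meet the codimension-exactly-two strata $\m_2^i$ cleanly; this is consistent and causes no trouble.)

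For \emph{density} the natural route is a finite-dimensional parametric family. Fix a reference section $H_0$; choose a finite family of smooth sections $\{K_1,\dots,K_N\}$ of $\Sym_0(TM)$ spanning the fibre $\Sym_0(T_pM)$ at every $p\in M$ (possible by compactness, e.g.\ by patching local frames with a partition of unity), and consider the evaluation map $\Psi\colon M\times\RR^N\to\Sym_0(TM)$, $\Psi(p,s)=H_0(p)+\sum_j s_j K_j(p)$. Since the $K_j$ span each fibre, $\Psi$ is a submersion, hence transverse to every stratum $F(\m)$ of $\Sing(TM)$; by the parametric transversality theorem the set of $s$ for which $p\mapsto\Psi(p,s)$ is transverse to $\Sing(TM)$ is dense (indeed of full measure) in $\RR^N$ near $0$. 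Taking $s\to0$ shows every $H_0$ is a $C^\infty$-limit of generic sections; combined with openness, $\Gen$ is open and dense. The main obstacle is purely technical bookkeeping: making sure the ``transverse to a Whitney-stratified space'' notion behaves well fibrewise and patches over the chart cover (so that the finitely many local conditions of Lemma~\ref{lem:SingM}, Theorem~\ref{thm:whitney} assemble into a single global open-dense statement), and invoking the parametric transversality theorem stratum-by-stratum with the observation that there are only finitely many multiplicity types $\m$ with $L<n$. Once these are in place the result follows immediately.
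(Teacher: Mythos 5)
Your plan takes essentially the same overall route as the paper: reduce to Lemma~\ref{lem:SingM} (that $\Sing(TM)$ is a closed Whitney-stratified subspace of $\Sym_0(TM)$) and then invoke openness and density of stratified transversality. The paper simply cites Trotman's theorem (\cite{Tro1}, main Theorem on p.~274, together with Note (ii) there); you instead try to prove both halves from scratch. Your density argument is correct and self-contained: the finite spanning family $\{K_j\}$ makes the evaluation map $\Psi\colon M\times\RR^N\to\Sym_0(TM)$ a submersion, so it is transverse to each of the finitely many strata $F(\m)$; Thom's parametric transversality theorem then gives a full-measure (hence dense) set of parameters $s$ for which $\Psi(\cdot,s)$ is transverse to all of them, and sending $s\to 0$ produces a $C^\infty$-approximating sequence of generic sections.

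Your openness argument, however, has a genuine gap. You assert that transversality of $H$ to the stratified set $\Sing(TM)$ is "an open condition \dots\ because it is a local nondegeneracy condition on the $1$-jet of $H$." This is false for a general stratified set: the set of $1$-jets transverse to a stratification need not be open. The problem is at boundary points of strata. If $H(p)\in F(\m)$ and $H$ is transverse to $F(\m)$ at $p$, a small $C^1$-perturbation $\tilde H$ may push $\tilde H(p')$ (for $p'$ near $p$) into a nearby higher-dimensional stratum $F(\tilde\m)$, and there is a priori no reason for $\tilde H$ to be transverse to $F(\tilde\m)$ there, because without further hypotheses $T_q F(\tilde\m)$ can behave wildly as $q\to F(\m)$. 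What saves the day is precisely Whitney's condition (a): it forces tangent spaces to $F(\tilde\m)$ near $F(\m)$ to contain (limits of) tangent spaces to $F(\m)$, so transversality to the deeper stratum propagates to the adjacent one. This is not a side remark — the content of Trotman's main theorem is that openness of the set of transverse maps is \emph{equivalent} to $(a)$-regularity of the stratification. You do cite Lemma~\ref{lem:SingM}, which provides $(a)$-regularity (via Proposition~\ref{prop:transfaces} and Theorem~\ref{thm:whitney}), so all the ingredients are available, but the stated reason for openness is wrong and silently bypasses the very point that makes Trotman's theorem nontrivial. To make your openness step valid, replace the "$1$-jet condition" appeal with an explicit invocation of $(a)$-regularity, or simply cite Trotman's theorem as the paper does.
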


\begin{proof}
This follows from \cite{Tro1} main Theorem on page 274
and Note (ii) on the same page.
\end{proof}

\subsection{The singular locus of a generic deformation}

In Definition \ref{def:SingM} we
defined the singular set
 $\Sing(TM)\subset \Sym_0(TM)$, which by 
 Lemma \ref{lem:SingM} is a closed, 
 Whitney-stratified subset. In this subsection,
 for generic $h$ we define the singular set $S_h \subset M$ as $h^{-1}(\Sing(TM))$, considering
 $h:M \to \Sym_0(TM)$ a smooth section.

\begin{defin}
  Let  $h:M \to S^2(T^*M)$ be smooth,
  $h \in T_{g_0}\Metmu$,
  and let  $H$ be
  the corresponding $g_0$-self-adjoint
  traceless endormorphism.
We say that at a point $p \in M$ the endomorphism $H$
has eigenvalue multiplicity  $\m=(m_1,...,m_L)$, if the eigenvalues 
$\lambda_1(p)\leq \cdots \leq \lambda_n(p)$ of $H(p)$ have the multiplicities $m_1,...,m_L$.
We set
$$
 \Str_{\m}(h):=\{ p \in M \mid H(p) \textrm{ has eigenvalue multiplicity } \m\} \subset M\,.
$$
For $\m=(m_1,...,m_L)$ we set
$\vert \m\vert :=L$
and call
$$
 S_h:=\bigcup_{\vert \m\vert <n } \Str_{\m}(h) 
 \quad \textrm{ and }\quad
 R_h:=M \backslash S_h 
$$
the \emph{singular}  and  \emph{regular locus} of $h$, respectively.
\end{defin}

We come to our second main result in this section.

\begin{thm}\label{thm:strat}
 Let $h \in T_{g_0}\Metmu$ be generic. 
Then the singular locus $S_h$ of $h$ is either empty or a compact Whitney stratified subspace of $M$
of codimension two.
\end{thm}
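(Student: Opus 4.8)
The plan is to pull back the Whitney stratification of $\Sing(TM)\subset\Sym_0(TM)$ along the transverse section $H\colon M\to\Sym_0(TM)$ and then invoke the standard fact that transverse preimages of Whitney stratified sets are again Whitney stratified, with strata of the same codimension. First I would recall from Lemma \ref{lem:SingM} that $\Sing(TM)$ is a closed Whitney-stratified subspace of $\Sym_0(TM)$ of codimension two, with strata obtained (in the local charts $\Phi_x$) from the singular faces $F(\m)\subset\Sym_0(n)$; a natural candidate for the strata of $S_h$ is exactly the collection of sets $\Str_{\m}(h)=H^{-1}(F(\m)_{TM})$ ranging over $\m$ with $|\m|<n$, where $F(\m)_{TM}\subset\Sym_0(TM)$ denotes the subbundle-of-strata built fiberwise from $F(\m)$.

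Next I would verify the two ingredients that genericity buys us. Since $h$ is generic, $H$ is transverse to $\Sing(TM)$; I would observe that transversality to the stratified set means transversality to each stratum $F(\m)_{TM}$ (this is how Thom–Mather transversality to a stratified set is defined, cf.\ the Trotman reference \cite{Tro1} already used in the excerpt). Then, for each $\m$, $\Str_{\m}(h)=H^{-1}(F(\m)_{TM})$ is an embedded submanifold of $M$ of codimension $\codim F(\m)_{TM}=\codim F(\m)$ inside $\Sym_0(n)$, which by Lemma \ref{lem:submanifold} equals $\sum_{i=1}^L d(m_i)$; in particular each such stratum has codimension $\ge 2$, and the top-dimensional singular strata (codimension exactly two) come from the multiplicity types $\m_2^j=(1,\dots,1,2,1,\dots,1)$. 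Compactness of $S_h$ is immediate: $S_h=H^{-1}(\Sing(TM))$ is closed in $M$ (preimage of a closed set under a continuous map), and $M$ is compact.

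For the Whitney property, I would argue locally in a chart $\Phi_x\colon\pi^{-1}(U_p)\to V\times\Sym_0(n)$ as in the proof of Lemma \ref{lem:SingM}, where $\Sing(TM)$ becomes $V\times\Sing(n)$ and the pair of adjacent strata $(F(\tilde\m),F(\m))$ satisfies Whitney's conditions (a) and (b) by Proposition \ref{prop:transfaces} and Proposition \ref{prop:conditionb}; the product $V\times\Sing(n)$ is then Whitney stratified by $V\times F(\m)$. In that chart $H$ becomes a smooth map $\tilde H\colon x(U_p)\to V\times\Sym_0(n)$ whose first component is the identity and whose second component is transverse to each $V\times F(\m)$; so $S_h\cap U_p$ is, up to the chart, the transverse preimage $\tilde H^{-1}(V\times\Sing(n))$. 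Invoking the transversal-preimage theorem for Whitney stratified sets — a transverse map pulls a Whitney stratification back to a Whitney stratification, with the pulled-back strata $\tilde H^{-1}(V\times F(\m))$ (see, e.g., \cite{GG} or the Thom–Mather theory underlying \cite{Tro1,Tro2}) — gives the Whitney property for $S_h$ locally; since Whitney's conditions are local and the local stratifications patch (the strata $\Str_{\m}(h)$ are globally defined), this yields the global statement. The strata are the $\Str_{\m}(h)$ with $|\m|<n$.

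The main obstacle I anticipate is purely bookkeeping rather than conceptual: making precise that $H$, which a priori is only known to be transverse to the \emph{total} singular set $\Sing(TM)$, is in fact transverse to every individual stratum (so that the transversal-preimage theorem applies stratum by stratum), and confirming that the transverse preimage theorem for Whitney stratified sets is available in exactly the form needed here — i.e.\ that it outputs a Whitney stratified set with the expected strata and codimensions. Both points are standard in Thom–Mather theory, but one has to be careful that ``transverse to a Whitney stratified set'' is taken in the stratumwise sense (which is the sense in which Definition \ref{def:globalgen} should be — and presumably is — read), and that the frame-dependence of the chart trivialization $\Phi_x$ does not affect the intrinsic stratified structure, since the transition maps act fiberwise by $\SO(n)$ and hence preserve each $F(\m)$.
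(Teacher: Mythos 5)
Your proposal is correct and follows essentially the same route as the paper: the paper's proof simply observes that genericity means $H$ is transverse to the closed Whitney-stratified set $\Sing(TM)$ of codimension two (Lemma \ref{lem:SingM}) and then invokes the standard transverse-preimage fact, with strata $\Str_{\m}(h)$. Your write-up merely spells out the details (stratumwise transversality, local charts, compactness) that the paper leaves implicit.
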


\begin{proof}
This is clear from the very definition
of generic deformations: the map
$H:M\to \Sym_0(TM)$ intersects
the singular set $\Sing(TM)$
transversally. Therefore, 
$S_h$ is either empty or a 
a compact Whitney stratified space of codimension two, the strata given
 by $\Str_m(h)$.
\end{proof}

In dimension $n=2$, for generic directions $h$ the singular locus 
consists of finitely many points, possibly none, whereas in dimension $n=3$
the singular locus consists of finitely many disjoint copies of $\mathbb{S}^1$. Starting with
dimension $n=4$, the multiplicity locus of an arbitrary generic direction will in general have singularities.

A particularly important aspect of the theory of generic directions is that not all multiplicities can occur. For example, if $n=4$ and $h$ is generic, then the multiplicity $\m_5^1=(3,1)$ cannot occur because a transverse 
intersection 
of the image of $H:M \to \Sym_0(TM)$
with $F(\m_5^1)$, which has co-dimension five, is of course impossible. 

Recall that our main goal in this paper is to apply Theorem \ref{curvatureasy} to show that scalar curvature typically converges exponential quickly to $-\infty$. 
We conclude this section with the observation that multiplicities which are `problematic', i.e., they obstruct the applicability of Theorem \ref{curvatureasy} too much, are absent starting in dimension six: See Lemma \ref{lem:submanifold}  concerning the codimension of a face $F(\m)$.

\begin{lem}\label{dimensionsixgood}
Let  $n\geq 6$ and
$\mathfrak{m}=(m_1,\cdots,m_L)$ be an eigenvalue multiplicity with 
$$ 
m_1=M_1,\,\,m_1+m_2=M_2,\,\,\cdots,\,\,n=M_L
$$
and $\sum_{i=1}^L d(m_i)\leq n$.
Let $\pi:\{1,\cdots,n\}\to \{1,\cdots,L\}$ be defined so that $\pi(i)$ is the unique number with $M_{\pi(i)-1} < i\le M_{\pi(i)}$ (with $M_0:=0$). 
Then,  there are at least $n+1$ triplets $(1,b,c)$ so that
 $1<b$ and   
$\pi(b)<\pi(c)$.
\end{lem}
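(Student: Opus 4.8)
The plan is to replace the combinatorial count by a closed‑form expression and then estimate it using the hypothesis $\sum_i d(m_i)\le n$. Write $M_\ell:=m_1+\cdots+m_\ell$ with $M_0:=0$, so that $\pi(i)=\ell$ precisely when $M_{\ell-1}<i\le M_\ell$; in particular $\pi(1)=1$. First I would observe that for a triplet $(1,b,c)$ with $b>1$ the condition $\pi(b)<\pi(c)$ says that $b$ lies in some block $\ell$ and $c$ in a strictly later block, so there are $n-M_\ell$ admissible $c$; summing over the admissible $b$ — the $m_1-1$ indices of the first block other than $1$ itself, together with all $m_\ell$ indices of each block $\ell\ge 2$ — gives
\[
 N:=\#\{(1,b,c):b>1,\ \pi(b)<\pi(c)\}=\sum_{\ell=1}^{L}m_\ell\,(n-M_\ell)-(n-m_1).
\]
Using $\sum_\ell m_\ell M_\ell=\sum_{k\le\ell}m_k m_\ell=\tfrac12\bigl(n^2+\sum_i m_i^2\bigr)$, the first sum collapses to $\tfrac12\bigl(n^2-\sum_i m_i^2\bigr)$, so that
\[
 N=\tfrac12\Bigl(n^2-\sum_{i=1}^{L}m_i^2\Bigr)-n+m_1.
\]

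Next I would reduce the problem to a single inequality. Since $m_1\ge 1$, it suffices to prove $\sum_i m_i^2\le n^2-4n$: granting this, $N\ge\tfrac12\bigl(n^2-(n^2-4n)\bigr)-n+m_1=n+m_1\ge n+1$. Denoting by $k_1,\dots,k_s$ the block sizes that are $\ge 2$ (blocks of size $1$ contribute nothing to $\sum_i m_i^2-\sum_i m_i$), one has $\sum_i m_i^2=n+\sum_j k_j(k_j-1)$, so the target becomes $\sum_j k_j(k_j-1)\le n^2-5n$.

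Finally I would bound $\sum_j k_j(k_j-1)$ using the hypothesis. From $d(k)=\tfrac{k(k+1)}{2}-1$ one gets $k(k-1)=2d(k)-2(k-1)\le 2d(k)$ for $k\ge 1$, hence $\sum_j k_j(k_j-1)\le 2\sum_j d(k_j)\le 2n$, and $2n\le n^2-5n$ as soon as $n\ge 7$. The main obstacle is the borderline dimension $n=6$, where this crude estimate $\sum_i m_i^2\le 3n$ is too weak and one must exploit that $d(4)=9>6$ forces every block to have size at most $3$: if some $k_j=3$ then $d(k_j)=5$, so $\sum_j d(k_j)\le 6$ leaves no room for a further block of size $\ge 2$ and $\sum_j k_j(k_j-1)=6=n^2-5n$; if instead every $k_j=2$ then $\sum_j k_j(k_j-1)=2s$ with $2s=\sum_j d(k_j)\le 6$; and the case $s=0$ is trivial. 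In each case $\sum_j k_j(k_j-1)\le n^2-5n$, which yields $\sum_i m_i^2\le n^2-4n$ and hence $N\ge n+1$, completing the proof.
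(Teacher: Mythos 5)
Your proposal is correct, and it takes a genuinely different and more conceptual route than the paper. The paper handles $n=6$ by an explicit table enumerating all $17$ admissible multiplicity vectors and listing seven triplets for each, then proceeds by induction on $n$ for $n\ge 7$ (splitting into the case where all but the first block have size $1$, and the case where one can reduce a block size by one and appeal to the $(n-1)$ statement). You instead derive the closed form
\begin{equation*}
N=\tfrac12\Bigl(n^2-\sum_{i=1}^{L}m_i^2\Bigr)-n+m_1,
\end{equation*}
verified here against the table entries for $(2,2,2)$ and $(3,1,1,1)$, and reduce the lemma to the single inequality $\sum_i m_i^2\le n^2-4n$, which you then deduce from $\sum_i d(m_i)\le n$ via the pointwise bound $k(k-1)\le 2d(k)$, clean for $n\ge 7$ and handled by a short three-case analysis at $n=6$ (where $d(4)>6$ caps the block sizes). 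Your approach buys transparency and robustness: it explains \emph{why} the count exceeds $n$, it actually shows $N\ge n+m_1$, and it would extend readily if one needed a stronger lower bound or a higher starting dimension. The paper's approach is more elementary and self-contained (no algebraic identity for $\sum_\ell m_\ell M_\ell$ is needed), but the $n=6$ base step is lengthy and the induction step is stated somewhat informally; your argument sidesteps both.
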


\begin{proof}
First let us treat the $n=6$ case. Since $d(m_i)=\frac{m_i(m_i+1)}{2}-1$, it must be the case that $m_i<4$ for all $i$. Furthermore, if $m_i=3$ for any $i$, then the other multiplicities must all be $1$. Each  possible multiplicity is listed in the first column of the table below. For each multiplicity, the corresponding row provides a list of seven allowable triplets. 
\tiny
\begin{center}
\begin{tabular}{ |l|c|c|c|c|c|c|c| } 
 \hline
$\qquad\quad \mathfrak{m}$ & 1&2&3&4&5&6&7 \\
 \hline
 $\m_5^1$=(3,1,1,1) & (1,2,4)&(1,2,5)&(1,2,6)&(1,3,4)&(1,3,5)&(1,3,6)&(1,4,5) \\ 
 $\m_5^2$=(1,3,1,1) & (1,2,5)&(1,2,6)&(1,3,5)&(1,3,6)&(1,4,5)&(1,4,6)&(1,5,6)  \\ 
 $\m_5^3$=(1,1,3,1) & (1,2,3)&(1,2,4)&(1,2,5)&(1,2,6)&(1,3,6)&(1,4,6)&(1,5,6)  \\
 $ \m_5^4$=(1,1,1,3) & (1,2,3)&(1,2,4)&(1,2,5)&(1,2,6)&(1,3,4)&(1,3,5)&(1,3,6)\\
 $ \m_6$=(2,2,2) & (1,2,3)&(1,2,4)&(1,2,5)&(1,2,6)&(1,3,5)&(1,3,6)&(1,4,5)\\
 $\m_4^1$=(2,2,1,1) & (1,2,3)&(1,2,4)&(1,2,5)&(1,2,6)&(1,3,5)&(1,3,6)&(1,4,5)\\
  $ \m_4^2$=(2,1,2,1) & (1,2,3)&(1,2,4)&(1,2,5)&(1,2,6)&(1,3,5)&(1,3,6)&(1,4,6)\\
  $ \m_4^3$= (2,1,1,2) & (1,2,3)&(1,2,4)&(1,2,5)&(1,2,6)&(1,3,5)&(1,3,6)&(1,4,5)\\
 $ \m_4^4$=(1,2,2,1) & (1,2,4)&(1,2,5)&(1,2,6)&(1,3,4)&(1,3,5)&(1,3,6)&(1,4,6)\\
  $\m_4^5$=(1,2,1,2) & (1,2,4)&(1,2,5)&(1,2,6)&(1,3,4)&(1,3,5)&(1,3,6)&(1,4,6)\\
 $\m_4^6$=(1,1,2,2) & (1,2,3)&(1,2,4)&(1,2,5)&(1,2,6)&(1,3,5)&(1,3,6)&(1,4,6)\\
 $\m_2^1 $= (2,1,1,1,1) & (1,2,3)&(1,2,4)&(1,2,5)&(1,2,6)&(1,3,4)&(1,3,5)&(1,3,6) \\ 
 $\m_2^2$=(1,2,1,1,1) & (1,2,4)&(1,2,5)&(1,2,6)&(1,3,4)&(1,3,5)&(1,3,6)&(1,4,5)  \\ 
 $ \m_2^3$=(1,1,2,1,1) & (1,2,3)&(1,2,4)&(1,2,5)&(1,2,6)&(1,3,5)&(1,3,6)&(1,4,5)  \\
 $\m_2^4$=(1,1,1,2,1) & (1,2,3)&(1,2,4)&(1,2,5)&(1,2,6)&(1,3,4)&(1,3,5)&(1,3,6)\\
 $ \m_2^5$=(1,1,1,1,2) & (1,2,3)&(1,2,4)&(1,2,5)&(1,2,6)&(1,3,4)&(1,3,5)&(1,3,6)\\
$ \m_0$= (1,1,1,1,1,1) & (1,2,3)&(1,2,4)&(1,2,5)&(1,2,6)&(1,3,4)&(1,3,5)&(1,3,6)\\
 \hline
\end{tabular}
\end{center}
\normalsize
We proceed by induction, using the $n=6$ case as the base step. 
If $n\ge 7$, then for any multiplicity $\mathfrak{m}=(m_1,\cdots,m_L)$ with
$\sum_{i=1}^L d(m_i)\leq n$, we proceed in cases:
\begin{itemize}
    \item If $m_i=1$ for all $i>1$, then $L-1=n-m_1$, and there are $m_1\cdot \frac{(L-1)(L-2)}{2}$ allowable triplets.  For $m_1=2,3$, it is clear that this expression is at least $n+1$. We now suppose that $m_1\ge 4$ and $d(m_1)=\frac{m_1(m_1+1)}{2}-1\le n$. We conclude that $m_1\le \frac{1}{2}\sqrt{8n+9}-\frac{1}{2}$, so 
    \begin{align*}
        m_1\cdot \tfrac{(L-1)(L-2)}{2}
        \ge 
        2 \big( n+\tfrac{1}{2}-\tfrac{1}{2}\sqrt{8n+9}\big)
        \cdot 
        \big( n-\tfrac{1}{2}-\tfrac{1}{2}\sqrt{8n+9}\big);
    \end{align*}
    this expression is at least $n+1$ whenever $n\ge 7$. 
    \item If $m_i>1$ for some $i>1$, then the multiplicity $\tilde{\mathfrak{m}}=(m_1,\cdots,m_{i-1},m_i-1,m_{i+1},\cdots,m_L)$ satisfies the required estimate for dimension $n-1$, and so there are at least $n$ allowable triplets. The result follows since $\mathfrak{m}$ clearly has at least one more allowable triplet. 
\end{itemize}

\end{proof}

We now turn to dimension five.

\begin{lem}\label{tabledim5}
Let  $n= 5$ and
$\mathfrak{m}=(m_1,\cdots,m_L)$ be an eigenvalue multiplicity with 
$$ 
m_1=M_1,\,\,m_1+m_2=M_2,\,\,\cdots,\,\,n=M_L
$$
and $\sum_{i=1}^L d(m_i)\leq 5$.
Then, all the triplets $(1,b,c)$ with
 $1<b$ and   
$\pi(b)<\pi(c)$ are listed in the following table:
\tiny
{\rm 
\begin{center}
\begin{tabular}{ |l|c|c|c|c|c|c| } 
 \hline
 $\qquad\quad\mathfrak{m}$ & 1&2&3&4&5&6\\
 \hline
 $\m_5^3$=(1,1,3) & (1,2,3)&(1,2,4)&(1,2,5)&-&-&-\\
 $\m_5^2$=(1,3,1) & (1,2,5)&(1,3,5)&(1,4,5)&-&-&-\\
 $\m_5^1$=(3,1,1) & (1,2,4)&(1,2,5)&(1,3,4)&(1,3,5)&-&-\\ 
  $\m_4^3$=(1,2,2) & (1,2,4)&(1,2,5)&(1,3,4)&(1,3,5)&-&-\\
  $\m_4^2$=(2,1,2) & (1,2,3)&(1,2,4)&(1,2,5)&(1,3,4)&(1,3,5)&-\\
   $\m_4^1$=(2,2,1) & (1,2,3)&(1,2,4)&(1,2,5)&(1,3,5)&(1,4,5)&-\\
   $\m_2^4$= (1,1,1,2) & (1,2,3)&(1,2,4)&(1,2,5)&(1,3,4)&(1,3,5)&-\\
    $\m_2^3$= (1,1,2,1) & (1,2,3)&(1,2,4)&(1,2,5)&(1,3,5)&(1,4,5)&-\\
       $\m_2^2$=(1,2,1,1) & (1,2,4)&(1,2,5)&(1,3,4)&(1,3,5)&(1,4,5)&-\\
       $\m_2^1$=(2,1,1,1) & (1,2,3)&(1,2,4)&(1,2,5)&(1,3,4)&(1,3,5)&(1,4,5)\\
      $\m_0$= (1,1,1,1,1) & (1,2,3)&(1,2,4)&(1,2,5)&(1,3,4)&(1,3,5)&(1,4,5)\\
 \hline
\end{tabular}
\end{center}
}
\normalsize 
\end{lem}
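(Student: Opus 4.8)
The plan is to verify the table entirely by direct enumeration, since for $n=5$ the list of eigenvalue multiplicities $\mathfrak{m}=(m_1,\dots,m_L)$ with $\sum_{i=1}^L d(m_i)\leq 5$ is short and each produces only a handful of admissible triplets. First I would observe that $d(m)=\tfrac{m(m+1)}{2}-1$ so $d(1)=0$, $d(2)=2$, $d(3)=5$, $d(4)=9$, $d(5)=14$; hence in dimension five each block has size at most $3$, a block of size $3$ forces all other blocks to be $1$ (since $d(3)=5$ already exhausts the budget), and at most two blocks of size $2$ can occur (since $2\cdot d(2)=4\leq 5$ but there is then no room for a third). Running through these constraints enumerates precisely the eleven multiplicities in the left column: $(1,1,3)$, $(1,3,1)$, $(3,1,1)$, $(1,2,2)$, $(2,1,2)$, $(2,2,1)$, $(1,1,1,2)$, $(1,1,2,1)$, $(1,2,1,1)$, $(2,1,1,1)$, and $(1,1,1,1,1)$.

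Next, for each such $\mathfrak{m}$ I would compute the projection $\pi:\{1,\dots,5\}\to\{1,\dots,L\}$ from the block structure (with $M_{\pi(i)-1}<i\le M_{\pi(i)}$) and simply list all triplets $(1,b,c)$ with $1<b$ and $\pi(b)<\pi(c)$. Concretely, $b$ ranges over indices with $\pi(b)<L$ (so that there is room for a strictly larger block), and for each such $b$, $c$ ranges over all indices with $\pi(c)>\pi(b)$; additionally $1<b$ excludes the case $b=1$ but not $b$ in the first block when $m_1\geq 2$. For instance, for $\mathfrak{m}_2^1=(2,1,1,1)$ we have $\pi=(1,1,2,3,4)$, and the admissible $(1,b,c)$ are those with $b\in\{2,3,4\}$ (since $\pi(5)=4=L$, $b=5$ is excluded) and $\pi(c)>\pi(b)$: these are $(1,2,3),(1,2,4),(1,2,5),(1,3,4),(1,3,5),(1,4,5)$, matching the table's six entries. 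I would repeat this bookkeeping for each of the eleven rows and confirm agreement with the displayed table (including that the rows with fewer than six entries genuinely have no further admissible triplets, indicated by the dashes).

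Since this is a purely finite verification, there is no conceptual obstacle; the only risk is an arithmetic slip in one of the eleven enumerations. The main point of care is the condition $1<b$ combined with $\pi(b)<\pi(c)$: one must remember that when $m_1\geq 2$ the index $b=2$ still lies in the first block, so $\pi(b)=1$ and any $c$ with $\pi(c)>1$ contributes — this is what makes rows like $(2,1,1,1)$ and $(2,1,2)$ have more triplets than rows like $(1,2,1,1)$. I would double-check each row against the complementary count $\sum_{b:\pi(b)<L,\,b>1}\bigl(n-M_{\pi(b)}\bigr)$, which gives the total number of admissible triplets and must equal the number of non-dash entries in that row. This completes the proof of Lemma \ref{tabledim5}.
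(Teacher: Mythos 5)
Your method is the right one, and in fact the paper offers no written proof of this lemma at all --- it is asserted as a finite verification, so direct enumeration is exactly what is implicitly relied upon. Your classification of the admissible multiplicities is correct ($d(1)=0$, $d(2)=2$, $d(3)=5$, so blocks have size at most $3$, a $3$-block forces all others to be singletons, and at most two $2$-blocks fit), yielding precisely the eleven rows, and your worked example for $(2,1,1,1)$ and the cross-check count $\sum_{b:\,\pi(b)<L,\ b>1}\bigl(n-M_{\pi(b)}\bigr)$ are both correct.

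One caveat: if you actually run your cross-check on every row, you will \emph{not} confirm agreement with the displayed table. For $\mathfrak{m}_5^1=(3,1,1)$ one has $\pi=(1,1,1,2,3)$, so the count is $2+2+1=5$, whereas the table lists only four triplets; the triplet $(1,4,5)$ (with $\pi(4)=2<\pi(5)=3$) satisfies the stated conditions but is missing from that row. This is consistent with the $n=6$ table, where $(1,4,5)$ \emph{is} listed for $(3,1,1,1)$. So the discrepancy lies in the published table, not in your procedure. It is harmless for the rest of the paper --- Lemma \ref{tabledim5} is only ever used to bound the number of triplets from below (``triplets plus codimension exceeds five'' in Proposition \ref{genericperturbation>5}), and an extra triplet only helps --- but as written, the claim that ``all the triplets \dots are listed'' fails for that one row, and your proof should either add $(1,4,5)$ to it or note the correction explicitly rather than assert agreement.
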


Even though we do not have
sufficiently many nice triples, we notice
that the number of the triples in each
case still exceeds the dimension
of the corresponding stratum by at least one.

We now turn to dimension four.

\begin{lem}
Let  $n=4$ and
$\mathfrak{m}=(m_1,\cdots,m_L)$ be an eigenvalue multiplicity with 
$$ 
m_1=M_1,\,\,m_1+m_2=M_2,\,\,\cdots,\,\,n=M_L
$$
and $\sum_{i=1}^L d(m_i)\leq 4$.
Then, all the triplets $(1,b,c)$ with
 $1<b$ and   
$\pi(b)<\pi(c)$ are listed in the following table:
\tiny
{\rm 
\begin{center}
\begin{tabular}{ |l|c|c|c| } 
 \hline
 $\qquad\quad\mathfrak{m}$ & 1&2&3\\
 \hline
 $\m_4$=(2,2) & (1,2,3) & (1,2,4) & - \\
   $\m_2^3$= (1,1,2) & (1,2,3)&(1,2,4)&-\\
    $\m_2^2$= (1,2,1) & (1,2,4)&(1,3,4)&-\\
       $\m_2^1$=(2,1,1) & (1,2,3)&(1,2,4)&(1,3,4)\\
      $\m_0$= (1,1,1,1) & (1,2,3)&(1,2,4)&(1,3,4)\\
 \hline
\end{tabular}
\end{center}
}
\normalsize 
\end{lem}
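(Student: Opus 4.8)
The plan is to prove the lemma by a direct finite enumeration, in the same spirit as the proof of Lemma~\ref{dimensionsixgood}, but here the combinatorics collapses at once because the constraint $\sum_{i=1}^{L}d(m_i)\le 4$ is so restrictive. First I would record the values $d(1)=0$, $d(2)=2$, $d(3)=5$, $d(4)=9$. Since $d(m)>4$ for every $m\ge 3$, no block may have size $3$ or more, so each $m_i\in\{1,2\}$; conversely every ordered partition of $n=4$ into parts of size $1$ and $2$ automatically satisfies $\sum_i d(m_i)\le 4$, since there can be at most two parts equal to $2$ and then $\sum_i d(m_i)=4$. Hence the admissible multiplicities are precisely the five appearing in the table, namely $\mathfrak{m}_4=(2,2)$, $\mathfrak{m}_2^1=(2,1,1)$, $\mathfrak{m}_2^2=(1,2,1)$, $\mathfrak{m}_2^3=(1,1,2)$ and $\mathfrak{m}_0=(1,1,1,1)$; equivalently, by Lemma~\ref{lem:submanifold}, these are exactly the $n=4$ multiplicities for which $F(\mathfrak{m})$ has codimension at most $n$ in $\Sym_0(n)$.

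With the list of multiplicities fixed, the remaining task is, for each one, to write down the block-assignment map $\pi\colon\{1,\dots,4\}\to\{1,\dots,L\}$ and simply read off every pair $(b,c)$ with $1<b$ and $\pi(b)<\pi(c)$. It is convenient to organise this as follows: an admissible $b$ is any coordinate with $b\ge 2$ whose block $\pi(b)$ is \emph{not} the last block $L$ (otherwise no $c$ with $\pi(c)>\pi(b)$ exists), and for each such $b$ the index $c$ runs over all coordinates lying in a block strictly after $\pi(b)$. Carrying this out case by case gives: for $\mathfrak{m}_4=(2,2)$ only $b=2$ is admissible, with $c\in\{3,4\}$; for $\mathfrak{m}_2^3=(1,1,2)$ only $b=2$ is admissible, with $c\in\{3,4\}$; for $\mathfrak{m}_2^2=(1,2,1)$ the admissible $b\in\{2,3\}$ each force $c=4$; for $\mathfrak{m}_2^1=(2,1,1)$ one has $b=2$ with $c\in\{3,4\}$ together with $b=3$ forcing $c=4$; and for $\mathfrak{m}_0=(1,1,1,1)$ one obtains every $(b,c)$ with $2\le b<c\le 4$. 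Comparing these lists with the rows of the table finishes the proof.

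The hard part is essentially nonexistent---the argument is pure bookkeeping---so the only thing requiring care is to be genuinely exhaustive in each case: in particular one must not forget that $b$ is allowed to lie in the first block (as long as $b>1$), which is exactly what produces the triplets $(1,2,3),(1,2,4)$ for $\mathfrak{m}_4=(2,2)$ and for $\mathfrak{m}_2^1=(2,1,1)$, while at the same time discarding any $b$ sitting in the final block. It is worth flagging, in contrast to Lemma~\ref{tabledim5} and Lemma~\ref{dimensionsixgood}, that in dimension four the number of admissible triplets need \emph{not} exceed the dimension of the corresponding stratum: for $\mathfrak{m}_2^3=(1,1,2)$ there are only two triplets whereas $\dim\Str_{\mathfrak{m}_2^3}(h)=n-\sum_i d(m_i)=2$. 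This shortfall is exactly the kind of obstruction that prevents the method of Theorem~\ref{curvatureasy} from settling the case $n=4$ of Theorem~\ref{thm0} (cf.\ Remark~\ref{rem:dim4}).
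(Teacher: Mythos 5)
Your proof is correct and proceeds exactly as the paper intends: the paper states this lemma with only the table (no written proof, since the verification is pure enumeration, as is the analogous dimension-$5$ lemma), and your enumeration of admissible multiplicities via $d(1)=0$, $d(2)=2$, $d(m)>4$ for $m\ge 3$ and the subsequent case-by-case listing of triplets $(1,b,c)$ is the content the paper leaves implicit. The concluding observation that for $\mathfrak{m}_2^3=(1,1,2)$ (and likewise $\mathfrak{m}_2^2$) the count of triplets no longer exceeds $\dim\Str_{\mathfrak{m}}(h)$ is precisely the point the paper itself flags in the sentence following the table and in Remark~\ref{rem:dim4}.
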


We see that there can be two $2$-dimensional strata in $M$ for which
we have only $2$ triples. Even worse,
on the regular part, we have only $3$ triples.

\begin{rem}\label{rem:dim4} Recall that in Lemma \ref{lem:3dplus}, we showed that the set of directions in which scalar curvature tends uniformly to $-\infty$ cannot be dense. The proof involved pointing out that in dimension three, according to Corollary \ref{cor:scalfor}, there are only three scalar functions that could possibly force scalar curvature exponentially towards $-\infty$, and that these three functions could be chosen to have a transversal intersection with $0$. Such a proof cannot be extended to dimension four, because there are now six functions (the $3$ functions coming from the $3$ triples $(1,2,3)$, $(1,2,4)$ and $(1,3,4)$, and also the $3$ functions  $e_1\lambda_i$, $i=1,2,3$). On the other hand, we have not established generic convergence of scalar curvature to $-\infty$ either, because we do not
have such a nice formula as in the regular case to treat the scalar curvature on the $2$-dimensional strata for $\mathfrak{m}_2^3=(1,1,2)$ and $\mathfrak{m}_2^2=(1,2,1)$. 
\end{rem}

%%%%%%%%%%%%%%%%%%%%%%%%%%%%%%%%%%%%

\section{Perturbation}
We now turn attention to proving Theorem \ref{thm0}. Recall that our candidate for the open and dense set is $\mathcal{Y}_{g_0}$ from  Definition \ref{candidateopendense}. We already know that this set is open (Lemma \ref{lem:candidateopen}), so we need to show density. Recall
that we denoted by $\Gen$ the set of generic directions $h \in T_{g_0}\Metmu$.
To start, we observe that it is possible to ensure good eigenvalue clustering by shrinking neighbourhoods.
\begin{lem}\label{nbhdeigencluster}
    Let $(M,g_0,p)$ be a pointed compact Riemannian manifold of dimension $n\ge 5$, and let $h_0\in \Gen$. Let the corresponding traceless endomorphism $H_0$ have eigenvalue multiplicity $\mathfrak{m}(p)=(m_1,\cdots,m_L)$ at the point $p$. Then there exists a compact neighbourhood $K_p$ of $p$ in $M$ (with smooth boundary), and an open neighbourhood $\mathcal{Z}_{p}$ of $h_0$ in $\Gen$ so that
    for each $h\in \mathcal{Z}_{p}$, there exists a smooth $g_0$-orthonormal frame $e_h$ on $K_p$ (depending smoothly on $h$) in which the corresponding $H$ appears in the block diagonal form \eqref{blockdiagonalH} with multiplicity $\mathfrak{m}(p)$ and strictly increasing smooth eigenvalue functions $\lambda_1 < \cdots< \lambda_L:K_p \to \RR$, and $$
     \Vert S\Vert \leq \tfrac{1}{12000}\cdot \min_{K_p}\{\left|\lambda_j-\lambda_k\right|, j \neq k\}.
     $$
\end{lem}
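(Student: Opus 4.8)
The plan is to assemble the statement from Lemma \ref{lem:goodframe} and Lemma \ref{lem:smoothframe}, together with a continuity argument that exploits the vanishing of the block terms $S_i$ at $p$.

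First I would fix the data. Let $\lambda_1^* < \cdots < \lambda_L^*$ be the distinct eigenvalues of $H_0(p)$, occurring with multiplicities $(m_1,\dots,m_L)=\mathfrak m(p)$, so that $\sum_{i=1}^L m_i\lambda_i^* = \tr H_0(p) = 0$, and let $r$ and $\epsilon$ be built from these exactly as in Lemma \ref{lem:goodframe}. Since the eigenvalues of a symmetric matrix depend continuously on the matrix, $H_0$ is continuous, and $H_0(p)$ has precisely $m_i$ eigenvalues equal to $\lambda_i^*$, I can choose a coordinate neighbourhood $(U,x)$ of $p$ with $x(U)=B_4(0)$, $x(p)=0$, carrying a $g_0$-orthonormal frame, so small that for every $q\in U$ the matrix $H_0(q)$ has $m_i$ eigenvalues within $\tfrac{\epsilon}{10}$ of $\lambda_i^*$ for each $i$. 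This is exactly the hypothesis under which both Lemma \ref{lem:goodframe} and Lemma \ref{lem:smoothframe} apply on this chart.

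Next I would invoke Lemma \ref{lem:smoothframe}: it yields a compact neighbourhood of $p$ and a $C^\infty$-open neighbourhood $\mathcal V$ of $h_0$ on which the block-diagonalising frame $e_h$ of Lemma \ref{lem:goodframe} varies smoothly with $h$. I am free to shrink the compact neighbourhood, so I take $K_p$ to be a small closed geodesic ball about $p$, which has smooth boundary. Shrinking $\mathcal V$ if necessary so that every $h\in\mathcal V$ still has $m_i$ eigenvalues within $\tfrac{\epsilon}{5}$ of $\lambda_i^*$ throughout $U$, Lemma \ref{lem:goodframe} then puts each corresponding $H$ into the block form \eqref{blockdiagonalH} with multiplicity $\mathfrak m(p)$, with smooth eigenvalue functions obeying $|\lambda_i-\lambda_i^*|\le\epsilon$ and, by \eqref{eigenvaluer2}, $|\lambda_{i+1}-\lambda_i|\ge r/2$ on $K_p$; since $\epsilon<r/2$ the $\lambda_i$ are strictly increasing, and summing the consecutive gaps gives $|\lambda_j-\lambda_k|\ge r/2$ for all $j\ne k$ on $K_p$.

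The step I expect to be the only genuine obstacle is the quantitative bound on $\Vert S\Vert$, since Lemma \ref{lem:goodframe} supplies only $\Vert S\Vert\le 4\epsilon$, which does not in general beat $\tfrac1{12000}\min_{j\ne k}|\lambda_j-\lambda_k|$. Here the key point is that $H_0(p)$ has multiplicity exactly $\mathfrak m(p)$, so each block $H_i$ of $H_0$ equals $\lambda_i^*I_{m_i}$ at $p$ and hence $S^{h_0}(p)=0$. By continuity of $S^{h_0}$ and of the frame $e_{h_0}$, a further shrinking of $K_p$ arranges $\Vert S^{h_0}\Vert\le\tfrac{r}{48000}$ on $K_p$; and since $S^h$ depends continuously — indeed smoothly, by Lemma \ref{lem:smoothframe} — on $h$, I can then find an open neighbourhood $\mathcal Z_p$ of $h_0$ with $\mathcal Z_p\subseteq\mathcal V\cap\Gen$ (still open, $\Gen$ being open by Theorem \ref{thm:generic}) on which $\Vert S^h\Vert\le\tfrac{r}{24000}\le\tfrac1{12000}\cdot\tfrac r2\le\tfrac1{12000}\min_{K_p}\{|\lambda_j-\lambda_k|:j\ne k\}$. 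This twofold shrinking — of $K_p$ to exploit $S(p)=0$, and of the parameter set to propagate the estimate to nearby $h$ — is the one new ingredient; everything else is a direct application of the two preceding lemmas, and $\mathcal Z_p$ together with $K_p$ and the frame $e_h$ are then the objects asserted in the statement.
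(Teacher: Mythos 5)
Your argument is correct. Its overall shape --- shrink $U$ and $K_p$ so that the eigenvalue clustering hypotheses of Lemma \ref{lem:goodframe} and Lemma \ref{lem:smoothframe} hold, shrink $\mathcal Z_p$ so the same clustering persists for nearby $h$, and then read off the block form and the smooth dependence of $e_h$ --- is the same as the paper's, whose proof states the conclusion in a single line as following from those two lemmas. Where you go beyond the paper is the $\Vert S\Vert$ bound. Read literally, Lemma \ref{lem:goodframe} gives only $\Vert S\Vert\le 4\epsilon=\tfrac{r}{3000\,(C_2(n)+\tilde C_2(n))}$, which dominates $\tfrac{1}{12000}\min_{K_p}|\lambda_j-\lambda_k|$ from below only under the constraint $C_2(n)+\tilde C_2(n)\ge 8$ (since $\min_{K_p}|\lambda_j-\lambda_k|\ge\tfrac{r}{2}$ is the only lower bound available); this is harmless, as those constants may always be enlarged without affecting Lemmas \ref{lowerboundgIt} and \ref{upperboundgIt}, but the paper never records it. You sidestep the issue with the sharper observation that $H_0(p)$ has multiplicity exactly $\mathfrak m(p)$, hence $S^{h_0}(p)=0$, so a further shrinking of $K_p$ and then of $\mathcal Z_p$ pushes $\Vert S^h\Vert$ below any preassigned threshold uniformly on $K_p$ --- in particular below $\tfrac{r}{24000}$. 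That extra shrinking step is the one genuine addition in your write-up; it gives a more robust route to the quantitative estimate and makes the proof independent of the unstated size of $C_2(n)+\tilde C_2(n)$, while otherwise coinciding with the paper's argument.
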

\begin{proof}
Since $n\ge 5$ and $h_0$ is generic, we must have $L\geq 2$: let the eigenvalues of $H_0$ at $p$ be $\lambda_1^*< \lambda_2^* < \cdots < \lambda_L^*$. Let $r:=\min\{\lambda_{i+1}^*-\lambda_i^*: i= 1,\cdots, L-1\}$ and 
$\epsilon=\tfrac{r}{12000\cdot (C_2(n)+\tilde{C_2}(n))}$. Then
there is a compact neighbourhood $K_p$ of $p$ on which there are $m_i$ eigenvalues of $H_0$ in $(\lambda_i^*-\tfrac{\epsilon}{2},\lambda_i^*+\tfrac{\epsilon}{2})$. Moreover,
there is an open neighbourhood $\mathcal{Z}_{p}$ of $h_0$ in $\mathcal{G}_{g_0}$ so that, for each $h\in \mathcal{Z}_{p}$, the eigenvalues of $H$
on all of $K_p$ are in $(\lambda_i^*-\epsilon,\lambda_i^*+\epsilon)$. By Lemma \ref{lem:goodframe} and Lemma \ref{lem:smoothframe}, the claim follows; we may assume that 
$K_p =x^{-1}(\overline{B_{1}(0)})$ as in Lemma
\ref{lem:goodframe}.   
\end{proof}

We now describe how to perturb tensors $h\in \mathcal{Z}_{p}$ so that the resulting frames satisfy \eqref{eqn:posstr} locally.

\begin{prop}\label{IFT}
Let $(M,g_0,p)$ be a pointed compact Riemannian manifold of dimension $n\ge 6$, and let $h_0\in \Gen$. Let the corresponding traceless endomorphism $H_0$ have eigenvalue multiplicity $\mathfrak{m}(p)=(m_1,\cdots,m_L)$ at the point $p$. 
Let $K_p=x^{-1}(\overline{B_{1}(0)})\subset x^{-1}(B_4(0))$ be the compact neighbourhood
of $p$ in $M$ 
and $\mathcal{Z}_{p}$ be an open neighbourhood 
of $h_0$ in $\Gen$ satisfying the conclusion of Lemma \ref{nbhdeigencluster}. Let $J$ be a collection of pairs $(j,k)$ with $1<j<k$. If $\left|J\right|>n$, then it is possible to shrink
$\mathcal{Z}_{p}$ to an open set $\mathcal{Y}_p$ and shrink the associated compact set $K_p$ so that $\mathcal{D}_p$ is dense in $\mathcal{Y}_p$ (in the $C^{\infty}$ topology). Here, $\mathcal{D}_p\subseteq \mathcal{Y}_p$ is  
the set directions for which the corresponding $g_0$-orthonormal frames $\{e_i\}_{i=1}^{n}$ (from Lemma \ref{nbhdeigencluster}) satisfy the following property: 
at every point on 
$K_p:=x^{-1}(\overline{B_{1}(0)})$,
at least one of the functions $\{g_0([e_1,e_j],e_k)\}_{(j,k) \in J}$ 
is non-zero.
\end{prop}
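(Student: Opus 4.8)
The plan is to identify $\mathcal D_p$ with the set of $h\in\mathcal Y_p$ for which the $\RR^{|J|}$-valued \emph{structure constant map}
\[
 \Phi_h\colon K_p\longrightarrow\RR^{|J|},\qquad
 \Phi_h(q):=\bigl(g_0([e_1,e_j],e_k)(q)\bigr)_{(j,k)\in J},
\]
built from the frame $e_h=\{e_i\}_{i=1}^{n}$ of Lemma~\ref{nbhdeigencluster} (which depends smoothly on $h$), is nowhere zero on $K_p$. I would arrange that every pair $(j,k)\in J$ satisfies $\pi(j)<\pi(k)$; this holds automatically at the points of $K_p$ where $H$ is regular, and it is the case in all applications (Lemma~\ref{dimensionsixgood}, Lemma~\ref{tabledim5}). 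Since $|J|>n=\dim K_p$, a map $\Phi_h$ that is transverse to $\{0\}$ is automatically nowhere zero (its zero set would be a submanifold of codimension $|J|>n$), so it suffices to find, inside any $C^\infty$-neighbourhood of any $h\in\mathcal Y_p$, some $h'$ with $\Phi_{h'}$ transverse to $\{0\}$, i.e.\ with $h'\in\mathcal D_p$.

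The deformations I would use are isometric conjugations. Fix a smooth $g_0$-skew-adjoint endomorphism field $\xi$ of $TM$ supported in the interior of $K_p$, put $H_s:=e^{s\xi}\circ H_0\circ e^{-s\xi}$, and let $h_s\in T_{g_0}\Metmu$ be the corresponding tensor. Fibrewise conjugation $F_q\mapsto e^{s\xi(q)}F_qe^{-s\xi(q)}$ is a bundle automorphism of $\Sym_0(TM)$ preserving $\Sing(TM)$, so $H_s$ is transverse to $\Sing(TM)$ iff $H_0$ is; hence $h_s\in\Gen$. Moreover $H_s(q)$ and $H_0(q)$ are conjugate, so they have the same eigenvalues, whence the eigenvalue-clustering hypothesis of Lemma~\ref{nbhdeigencluster} is preserved and $h_s\in\mathcal Z_p$ for $s$ small; clearly $h_s\to h_0$ in $C^\infty$ as $s\to0$.

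The heart of the argument is the first variation of $\Phi$ at a zero. Let $q_0\in K_p$ with $\Phi_{h_0}(q_0)=0$ and choose $\xi$ with $\xi(q_0)=0$. Then $H_s(q_0)=H_0(q_0)$ for all $s$, so — writing the frame, via the construction of Lemma~\ref{lem:smoothframe}, as a pointwise smooth function of $H_s$ — one has $e^s_i(q_0)=e_i(q_0)$ and $\tfrac{d}{ds}\big|_{0}e^s_i(q_0)=0$ for every $i$. Differentiating $g_0([e^s_1,e^s_j],e^s_k)=g_0(\nabla^{g_0}_{e^s_1}e^s_j-\nabla^{g_0}_{e^s_j}e^s_1,\,e^s_k)$ in $s$, and using that $\tfrac{d}{ds}\big|_0(\nabla_aH_s)(q_0)=[(e_a\xi)(q_0),H_0(q_0)]$ annihilates each eigenspace block of $H_0(q_0)$ (it is \emph{purely off-block}), first-order eigenspace perturbation theory gives, for each $(j,k)\in J$,
\[
 \tfrac{d}{ds}\Big|_{0}\,\Phi_{h_s}(q_0)_{(j,k)} = (e_1\xi_{kj})(q_0)-(e_j\xi_{k1})(q_0),
\]
where $\xi_{ab}$ are the components of $\xi$ in the frame $e_{h_0}(q_0)$. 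The place where the block structure could cause trouble — that the frame of Lemma~\ref{lem:goodframe} is determined only up to a rotation \emph{within} each block — causes none: the within-block part of the varied frame vectors lies in the block of $e_1$, resp.\ $e_j$, hence is $g_0$-orthogonal to $e_k$ (because $\pi(k)\neq\pi(1)$ and $\pi(k)\neq\pi(j)$), so it drops out of the formula. Since every $(j,k)\in J$ has $j>1$, the components $\xi_{kj}$ are not among the first-row components $\xi_{1k}$; hence I can prescribe the $1$-jet of $\xi$ at $q_0$ so that all first derivatives of the $\xi_{1k}$ vanish at $q_0$ (killing the $(e_j\xi_{k1})$-terms) while $(e_1\xi_{kj})(q_0)$ is an arbitrary real number for each of the $|J|$ pairwise distinct, non-first-row components. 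Thus conjugation deformations realize every element of $\RR^{|J|}$ as $\tfrac{d}{ds}\big|_0\Phi_{h_s}(q_0)$.

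To conclude, I would shrink $K_p$ slightly so that $e_{h_0}$ is defined on a neighbourhood of it, cover $K_p$ by finitely many small balls, and on each ball pick finitely many fields $\xi$ as above whose first variations span $\RR^{|J|}$ at the ball's centre; by continuity and openness of surjectivity, after shrinking $\mathcal Z_p$ to a neighbourhood $\mathcal Y_p$ of $h_0$ over which all frames $e_h$ stay uniformly $C^1$-close to $e_{h_0}$, these span $\RR^{|J|}$ at every $q\in K_p$ and for every $h\in\mathcal Y_p$. This yields, around any given $h\in\mathcal Y_p$, a finite-dimensional family of conjugation deformations $h_s$ with $s$ in a ball $B\subseteq\RR^{N}$ centred at $h$, such that $\Psi\colon B\times K_p\to\RR^{|J|}$, $\Psi(s,q)=\Phi_{h_s}(q)$, is a submersion at every zero. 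By the implicit function theorem $\Psi^{-1}(0)$ is a submanifold of codimension $|J|$; by Sard's theorem almost every $s\in B$ is a regular value of its projection to $B$, and for such $s$ the fibre $\Phi_{h_s}^{-1}(0)$ would be a manifold of dimension $n-|J|<0$, hence is empty, so $h_s\in\mathcal D_p$. Since $h_s\to h$ as $s\to0$, this establishes density of $\mathcal D_p$ in $\mathcal Y_p$ in the $C^\infty$ topology. The main obstacle is the first-variation computation at the repeated-eigenvalue points of $K_p$, where the frame is merely block-diagonalising: one must check that the surjection persists there, which is exactly what the orthogonality observation above secures.
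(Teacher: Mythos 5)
Your proposal is correct and rests on the same core computation as the paper: deform the frame/endomorphism by an infinitesimal rotation, compute the first variation of $g_0([e_1,e_j],e_k)$, and observe that the derivative-of-rotation components $e_1\xi_{kj}$ for the $|J|>n$ pairs $(j,k)\in J$ (none involving the first row of the skew matrix) are independent, so the linearization surjects onto $\RR^{|J|}$; the dimension count $|J|>n$ then forces the zero set to be empty after a generic small perturbation. What differs is the packaging. The paper formulates the problem on the $1$-jet space $B_2(0)\times\so(n)\times\Hom(\RR^n,\so(n))$, shows the zero locus $\hat F^{-1}(0)$ is a codimension-$|J|$ submanifold there, and then invokes the Thom Transversality Theorem as a black box; you instead build an explicit finite-parameter family of conjugations $H_s=e^{s\xi}H_0e^{-s\xi}$ (which are automatically in $\Gen$ and preserve eigenvalue clustering) and close with a direct parametric Sard argument. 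These are mathematically equivalent (Thom's theorem is itself proved by parametric Sard), but your version is more self-contained, makes the ``nearby $h$ realizing the generic frame'' step explicit rather than implicit, and neatly isolates why the within-block ambiguity of the frame from Lemma~\ref{lem:smoothframe} is harmless (the block-diagonal correction $\dot Q^0_{kj}$ vanishes identically since $\pi(k)\neq\pi(j)$). The one place where your write-up is slightly optimistic is the patching step: your first-variation formula is derived under the normalization $\xi(q_0)=0$, which kills the zeroth-order (algebraic) terms at $q_0$; at nearby points $q$ where $\xi(q)\neq 0$ those terms reappear, and ``openness of surjectivity'' needs to be invoked for the full linearization including them. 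In the paper's formulation this is automatic because the $W$-variable of the jet is independent of the $V$-variable, so the $W$-part alone already forces full rank; you should make that independence explicit rather than simply deleting the algebraic terms from the formula. This is a presentational gap, not a mathematical one.
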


\begin{proof}
We want to apply the Thom Transversality Theorem, specifically, Theorem
4.9 and Corollary 4.10 on page 56 in \cite{GG}.
  For a given co-ordinate system $(U,x)$ of $M$, 
  we denote by $G$ the coefficient
matrix of $g$ with respect
the chart $x$,
given by
$G_{ij}=g_0(\tfrac{\partial}{\partial x_i},\tfrac{\partial}{\partial x_j})$ for $1 \leq i,j \leq n$. 
By adjusting co-ordinates, we may assume that $G(p)=I_n$, $p=x^{-1}(0)$, and that on $U=x^{-1}(B_4(0))$ we have
a $g_0$-orthonormal frame $e_0=(e_1^0,\cdots ,e_n^0)$
with $e_i^0(p)=\tfrac{\partial}{\partial x_i}(p)$
for all $i=1,\cdots,n$.
For each point $q \in U$ and each $A \in C^{\infty}(U;\SO(n))$, we have 
$$
\sqrt{G^{-1}(q)}A(q)\sqrt{G(q)} \in \SO(T_qM,g_q),
$$
so that the change of basis matrix function $\sqrt{G^{-1}}A\sqrt{G}$ applied to the frame $\{e_i^0\}_{i=1}^{n}$ gives us another $g_0$-orthonormal frame.
Next,
using the chart $x$ we consider $G$, $\{e_i^0\}_{i=1}^n$ and $g_0$
to be functions of the variable $x\in B_2(0)$.
Moreover, let 
$$
 V:B_2(0)\to \hat V
:=\mathfrak{so}(n)=T_e \SO(n)=\RR^{\frac{1}{2}n(n-1)}
$$
be a smooth function.
We consider the function $F^V:B_2(0)\to \RR^N$, $N=\vert J\vert$, with
coordinate functions $F_{jk}^V:B_2(0)\to \RR$
given by
$$
F_{jk}^V:=g_0([e_1^V,e_j^V],e_k^V)
$$ 
where for all $j=1,\cdots, n$
$$
e_j^V:=\sqrt{G^{-1}}\text{exp}(V)\sqrt{G}\cdot e_j^0=\sum_{l=1}^n (\sqrt{G^{-1}}\text{exp}(V)\sqrt{G})_{lj}\cdot e_l^0.
$$
Note that for the zero function $V\equiv 0$
we have $F^V_{jk}=g_0([e_1^0,e_j^0],e_k^0)$.

Let $\hat W:={\rm Hom}(\RR^n, \hat V)$
and consider the smooth function 
\begin{align}\label{eqn:defF}
    \hat F:B_2(0)\times \hat V \times \hat W\to \RR^N\,\,;\,\,\,(x,V,W) \mapsto \hat F(x,V,W)
\end{align}
with coordinate functions
$\hat F_{jk}$,  $(j,k)$ ranging over $J$,
which satisfies
$$
    F^V(x)=\hat F(x,V(x),(DV)_x)
$$ 
for all $x \in B_2(0)$. 
To calculate $\hat{F}_{jk}$, we let 
$$
P(V,W)=\tfrac{d}{dt}\vert_{t=0} \exp(V+tW)\in \text{Hom}(\mathbb{R}^n,\exp(V)\hat{V})
$$
and recall that for smooth vector fields $X,Y$ on 
open subsets of $\RR^n$ we have $[X,Y]=DY\cdot X - DX\cdot Y$.
Thus, 
we obtain
\begin{align*}
    \hat{F}_{jk}(x,V,W)
    &=
    g_0(\sqrt{G^{-1}}\text{exp}(V)P(V,W)(e_1^V)\sqrt{G}\cdot e_j^0,e_k^V)\\
    &
    -g_0(\sqrt{G^{-1}}\text{exp}(V)P(V,W)(e_j^V)\sqrt{G}\cdot e_1^0,e_k^V)\\
    &+\hat{H}_{jk}(x,V),
\end{align*}
where $\hat{H}_{jk}$ captures the terms that do not involve any differentiation of $V$.
Now, since $P(0,W)=W$ and $G(0)=I_n$, the linearisation of $\hat{F}_{jk}$ at $(0,0,W_0)$ is given by 
\begin{align*}
    D(\hat{F}_{jk})_{(0,0,W_0)}(y,V,W)
    =
    g_0(W(e_1^0)\cdot e_j^0-W(e_j^0)\cdot e_1^0,e_k^0)_0
    +\hat{\tilde{H}}_{jk}(y,V),
\end{align*}
where $\hat{\tilde{H}}_{jk}$ is the linearisation of $\hat{H}_{jk}$ at $(0,0)$. By examining the $W$ components, it is clear that this linearisation has full rank. 
Thus $\hat{F}_{jk}^{-1}(0)$ defines a co-dimension one submanifold in the jet space. 
In fact, by examining the $W$ component for all $(j,k)\in J$, we find that these co-dimension one submanifolds $S_{jk}$ intersect transversally for $x\in B_{4\epsilon}(0)$ for
some $\epsilon>0$ and $V\in B_\delta(0)$
for some $\delta>0$ because $g_0(W(e_1)\cdot e_j,e_k)$ are independent in the jet space for $j<k$.  Thus,
$\hat N:=\hat F^{-1}(0)$ is a smooth submanifold in the
neighbourhood
$B_{4\epsilon}(0)\times B_\delta(0)\times \hat W$
of $(0,0,0)$ in 
$ B_2(0)\times \hat V\times \hat W$ of codimension $N=\Vert J\Vert >n$.
We now replace $B_4(0)$ by $B_{4\epsilon}(0)$ and
choose an open neighbourhood $\mathcal{Y}_p'$ of $h_0$
in $\Gen$ such that for all $h \in \mathcal{Y}_p'$
we have a block diagonalizing $g_0$-orthonormal basis
$e_h$ given by $V:B_2(0)\to \hat V$ with
$V(x) \in B_\delta(0)$ for all $x \in B_2(0)$.
By the Thom Transversality theorem the claim follows.
\end{proof}

\begin{prop}\label{prop:genericperturbation>6}
Let $(M,g_0,p)$ be a pointed compact Riemannian manifold of dimension $n\ge 6$, and let $h_0\in \Gen$. Let the corresponding traceless endomorphism $H_0$ have eigenvalue multiplicity $\mathfrak{m}(p)=(m_1,\cdots,m_L)$ at the point $p$. It is possible to find a compact neighbourhood $K_p$ with interior point $p$ and open neighbourhood $\mathcal{Y}_p$ of $h_0$ so that each $h\in \mathcal{Y}_p$ can be approximated by $h_i \in \Gen$ (on all of $M$)  such that \eqref{eqn:posstr} 
    holds at all points in $K_p$.
\end{prop}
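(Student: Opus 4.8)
The plan is to feed the combinatorial count of Lemma~\ref{dimensionsixgood} into the perturbation scheme of Proposition~\ref{IFT}, using the block-diagonalising frame supplied by Lemma~\ref{nbhdeigencluster}. The one point that must be checked first is that Lemma~\ref{dimensionsixgood} actually applies to the multiplicity $\mathfrak{m}(p)=(m_1,\dots,m_L)$ of $H_0$ at $p$, i.e.\ that $\sum_{i=1}^{L}d(m_i)\le n$. When $L=n$ this is trivial since $\sum d(m_i)=0$; when $L<n$ the point $p$ lies in the stratum $\Str_{\mathfrak{m}(p)}(h_0)=H_0^{-1}(F(\mathfrak{m}(p)))$, and genericity of $h_0$ means that $H_0\colon M\to\Sym_0(TM)$ meets the stratum $F(\mathfrak{m}(p))$ of $\Sing(TM)$ transversally at $p$. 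Since $\codim F(\mathfrak{m}(p))=\sum_{i=1}^{L}d(m_i)$ by Lemma~\ref{lem:submanifold}, a transverse intersection of the image of $H_0$ with $F(\mathfrak{m}(p))$ is only possible if $\sum_{i=1}^{L}d(m_i)\le\dim M=n$, exactly as in the discussion following Theorem~\ref{thm:strat}. Hence Lemma~\ref{dimensionsixgood} provides a family of at least $n+1$ triples $(1,b,c)$ with $1<b$ and $\pi(b)<\pi(c)$, where $\pi\colon\{1,\dots,n\}\to\{1,\dots,L\}$ records the block of each index.

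Next I would set up the frame and the index collection. Apply Lemma~\ref{nbhdeigencluster} to obtain a compact neighbourhood $K_p$ of $p$, an open neighbourhood $\mathcal{Z}_p$ of $h_0$ in $\Gen$, and for each $h\in\mathcal{Z}_p$ a smoothly varying block-diagonalising $g_0$-orthonormal frame $e_h=\bigcup_{i=1}^{L}\{e_{i_a}\}_{a=1}^{m_i}$; relabel its vectors as $e_1,\dots,e_n$ in block order, so that $e_1=e_{1_1}$ and $e_b$ lies in block $\pi(b)$ for every $b$. Let $J\subseteq\{(b,c):1<b<c\le n\}$ be the set of pairs obtained from the triples $(1,b,c)$ of the previous paragraph (note that $b<c$ follows from $\pi(b)<\pi(c)$ since the blocks are ordered intervals); then $\lvert J\rvert\ge n+1>n$, so Proposition~\ref{IFT} applies with this $J$. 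It shrinks $\mathcal{Z}_p$ to an open neighbourhood $\mathcal{Y}_p$ of $h_0$ and shrinks $K_p$ so that the set $\mathcal{D}_p\subseteq\mathcal{Y}_p$ of directions whose associated frame satisfies ``at every point of $K_p$, at least one of $\{g_0([e_1,e_b],e_c)\}_{(b,c)\in J}$ is non-zero'' is dense in $\mathcal{Y}_p$ in the $C^\infty$ topology.

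To conclude, fix $h\in\mathcal{Y}_p$. For each $(b,c)\in J$ we have $\pi(b)<\pi(c)$, so $g_0([e_1,e_b],e_c)$ is exactly one of the structure constants $(g_0)^{1_1,j_\beta,k_\gamma}$ with $j=\pi(b)<k=\pi(c)$ that appear in the sum \eqref{eqn:posstr}. Hence for any $h'\in\mathcal{D}_p$ and any $q\in K_p$, the non-vanishing at $q$ of some such constant forces \eqref{eqn:posstr} to hold at $q$; thus \eqref{eqn:posstr} holds at all points of $K_p$ for every $h'\in\mathcal{D}_p$. Since $\mathcal{D}_p\subseteq\mathcal{Y}_p\subseteq\Gen$ consists of globally generic directions and is dense in $\mathcal{Y}_p$ in the $C^\infty$ topology, every $h\in\mathcal{Y}_p$ is approximated on all of $M$ by elements $h_i\in\mathcal{D}_p\subseteq\Gen$ for which \eqref{eqn:posstr} holds throughout $K_p$, which is the assertion.

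The step I expect to require the most care is the middle one: arranging the relabelling $e_1,\dots,e_n$ to be simultaneously compatible with the block structure of Lemma~\ref{nbhdeigencluster}, the index map $\pi$ of Lemma~\ref{dimensionsixgood}, and the indexing $(g_0)^{1_1,j_b,k_c}$ of \eqref{eqn:posstr}, and then verifying that the non-vanishing of a single chosen structure constant really does imply the positivity condition \eqref{eqn:posstr} — which is legitimate precisely because, as noted after Definition~\ref{candidateopendense}, \eqref{eqn:posstr} is tensorial and hence insensitive to the choice of block-diagonalising frame, so it passes unambiguously to the nearby directions $h_i$.
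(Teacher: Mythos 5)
Your skeleton is right, and the preliminary reasoning is solid: the observation that transversality of $H_0$ with $F(\mathfrak{m}(p))$ forces $\sum d(m_i)\le n$ (so that Lemma~\ref{dimensionsixgood} applies) is exactly the needed hypothesis check, and the count $\lvert J\rvert > n$ followed by an appeal to Proposition~\ref{IFT} is the intended route. The book-keeping identifying $(b,c)\in J$ with the structure constants appearing in \eqref{eqn:posstr} is also fine.

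The gap is at the very last step. You conclude by invoking Proposition~\ref{IFT}'s stated density of $\mathcal{D}_p$ and noting $\mathcal{D}_p\subseteq\Gen$, as though that finishes the argument. But the proof of Proposition~\ref{IFT} establishes, via the Thom Transversality Theorem, only that the set of \emph{frame maps} $V\colon B_2(0)\to\mathfrak{so}(n)$ whose 1-jet avoids the codimension-$>n$ submanifold $\hat N$ is dense in the space of maps $V$; it does not by itself produce, for a given $h\in\mathcal{Y}_p$, a nearby globally-defined $h'\in T_{g_0}\Metmu$ whose associated Lemma~\ref{lem:smoothframe} frame is one of these good $V$'s. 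That passage — from a dense set of local frames on the chart to a dense set of global tensors — is precisely the content the paper supplies in its proof of Proposition~\ref{prop:genericperturbation>6}: it takes the approximating frames $V_i$, interpolates with the original $V_h$ via a cutoff $\psi$ supported in $x^{-1}(B_2(0))$, rebuilds a tensor $H_i$ that agrees with $H$ outside the chart and, inside it, has the same eigenvalue functions $\lambda_j$ and off-diagonal blocks $S_j$ as $H$ but is diagonalised by the interpolated frame, and then checks $C^l$-convergence for all $l$ and diagonalises to obtain $C^\infty$-convergence. Your proof contains none of this; it treats $\mathcal{D}_p$ as a ready-made dense family of globally-defined generic tensors when the construction of that family is exactly the work that remains. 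You should either perform that gluing yourself, or at minimum acknowledge that the density statement in Proposition~\ref{IFT} presupposes a frame-to-tensor back-construction that must be spelled out.
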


\begin{proof}
Let $e_0:=e_{h_0}$ be a $g_0$-orthogonal frame for $h_0$
giving $H_0$ block diagonal form on $x^{-1}(B_2(0))$.
By eventually shrinking $U=x^{-1}(B_4(0))$ we may assume in addition to satisfying the conclusion of Lemma \ref{nbhdeigencluster}. In addition, by composing $(dx)_p:T_pM \to \RR^n$ with a linear transformation
of $\RR^n$, we can assume that 
that $(dx)_p \cdot e_0^i(p)=e_i^s$
for all $i=1,\cdots, n$, $(e_1^s,...,e_n^s)$
standard basis of $\RR^n$.

The result follows from the fact that $H_0$ is generic. Indeed, since $n\ge 6$, Lemma \ref{dimensionsixgood} implies that there are at least 
$N:=\vert J\vert >n$ functions $(g_0)^{1_a,j_b,k_c}$ for which the triplets $(1_1,j_b,k_c)$ satisfy $1\leq j<k \leq L$,  $b\le m_j$ and $c\le m_k$. We let $J$ denote the set of all such pairs $(j_b,k_c)$.
Therefore, by Proposition \ref{IFT}, it is possible to find an open $\mathcal{Y}_p$ so that for each $h\in \mathcal{Y}_{p}$, it is possible to approximate the corresponding $g_0$-orthonormal frame $e_h$ by a sequence $e_{h_i}$ on $K_p$ so that 
\eqref{eqn:posstr} holds on all of
$K_p$. 
Recall that the frames $e_{h_i}$ are parametrized
by maps $V_i:x^{-1}(B_2(0))\to \so(n)$ and
that $e_h$ corresponds to a choice of $V_h$ which is small in the $C^{\infty}$ topology. 
We may and will
assume that $V_i$ converges to $V_h$ on
$x^{-1}(B_2(0))$ in $C^l$ topology for every
$l\in \NN$.

Let $\psi:M\to [0,1]$
be a smooth cutoff function with
$\psi\vert_{x^{-1}(\overline{B_1(0))}}\equiv 1$
and $\psi(q)=0 $ for $q \not \in 
x^{-1}(B_{1.5}(0))$.
 We consider
the frame $e_{i}$ corresponding to $\psi \cdot V_i+(1-\psi)V_h$, defined on $x^{-1}(B_2(0))$. 
It follows that $e_{i}$ converges to $e_h$
in $C^l$ topology on $x^{-1}(B_2(0))$. Next,
we define $H_i$ to be $H$ outside $x^{-1}(B_{2}(0))$, and inside, $H_i$ has the same eigenvalue functions and off-diagonal terms as
$H$, but with eigenbasis $e_{i}$ (instead of $e_h$). 
It follows that $H_i$ converges to $H$ in
the $C^l$ topology everywhere. 
Now for each $l \in \NN$
let $\tilde H_l$ be taken from the
above sequence with $\Vert \tilde H_l-H\Vert_{C^l}< \tfrac{1}{l}$.
Then $(\tilde H_l)_{l \in \NN}$
converges to $H$ in $C^\infty$ topology.
This shows the claim.
\end{proof}

%Let $(M,g)$ be a Riemannian manifold with $n:=\dim(M)\geq 3$, %and let $U=x^{-1}(B_4(0))$ be an open co-ordinate ball around %$p$. Let $S$ be a submanifold of $M$ and let $J$ be a %collection of pairs $(j,k)$, with $1<j<k\le n$ and  %$N:=\left|J\right|$. 
%Then, if $N + \dim S>n$,
%the set of $g$-orthonormal frames $\{e_i\}_{i=1}^{n}$ on %$x^{-1}(B_2(0))$ for which at least one of the functions %$\{g([e_1,e_j],e_k)\}_{(j,k) \in J}$ 
%is non-zero at every point on 
%$x^{-1}(\overline{B_{1}(0)})\cap S$ is 
%dense in the $C^{\infty}$ topology.

\begin{proof}[Proof of Theorem \ref{thm0} in  case dimension $n\ge 6$]
We already know 
by Lemma \ref{lem:candidateopen} that 
$\mathcal{Y}_{g_0}$ is open in $\Gen$, and since $\Gen$ is open and dense in $T_{g_0}\mathcal{N}_{\mu}$, it suffices to show that $\mathcal{Y}_{g_0}$ is dense in $\Gen$ (this shows in particular
that $\mathcal{Y}_{g_0}$ is non-empty).

To that end, choose $h_0\in \Gen$. 
 Proposition \ref{prop:genericperturbation>6} implies, for each $p\in M$, the existence of a compact subset $K_p$, with $p$ as an interior point, a neighbourhood $\mathcal{Y}_p$ of $h_0$ in $\Gen$, such that each $h \in \mathcal{Y}_p$
 can be approximated by $h_i^p$ in $C^\infty$ topology, such that \eqref{eqn:posstr} holds on all of $K_p$ for
 these endomorphisms.
 
 We now take a finite subcover $M=\cup_{i=1}^P K_i$ given by points $p_1,\cdots,p_P$. 
 We set $ \mathcal{\tilde Y}_{h_0}:=\cap_{i=1}^P \mathcal{Y}_{p_i}$, still an open neighbourhood
 of $h_0$ in $\Gen$.
 By Proposition
 \ref{prop:genericperturbation>6} we find $H_1 \in 
 \mathcal{ \tilde Y}_{h_0}$ such that on $K_1$ we have
 for \eqref{eqn:posstr} a lower
 bound  $ \delta_1 > 0$. 
 Inductively, we may assume that
 there exists $H_l \in 
 \mathcal{ \tilde Y}_{h_0}$ such that on $\cup_{i=1}^l K_i$
 we have  for \eqref{eqn:posstr} a lower bound 
 $ \delta_l > 0$. 
 The induction step follows again
 from Proposition
 \ref{prop:genericperturbation>6},
 since $H_l$ can be approximated by
 $H_{l+1}$ (on all of $M$) in $C^\infty$
 topology  such that on all of $K_{l+1}$
 \eqref{eqn:posstr} holds.
 Since $H_{l+1}$ can be chosen as close 
 to $H_l$ as we like,
 for $H_{l+1}$ we still may assume that 
 on  $\cup_{i=1}^l K_i$
we have  for \eqref{eqn:posstr} a lower bound 
 $
 \tfrac{\delta_l}{2} > 0$. 
 This shows
that $\mathcal{Y}_{g_0}$ is non-empty.
Clearly, the above argument shows that
$\mathcal{Y}_{g_0}$ is also dense.
\end{proof}

\begin{prop}\label{genericperturbation>5}
Let $(M,g_0,p)$ be a pointed compact Riemannian manifold of dimension $n=5$, and let $h_0\in \Gen$. Let the corresponding traceless endomorphism $H_0$ have eigenvalue multiplicity $\mathfrak{m}(p)=(m_1,\cdots,m_L)$ at the point $p$. Then there exists a compact neighbourhood $K_p$ of $p$ in $M$ (with smooth boundary), and an open neighbourhood $\mathcal{Y}_{h_0}$ of $h_0$ in $\Gen$ with the following properties:
\begin{itemize}
    \item[{\rm (1)}] For each $h\in \mathcal{Y}_{h_0}$ there exists a smooth $g_0$-orthonormal frame $e_h$ on $K_p$ (depending smoothly on $h$) in which the corresponding $H$ appears in the block diagonal form \eqref{blockdiagonalH} with multiplicity $\mathfrak{m}(p)$ and strictly increasing smooth eigenvalue functions $\lambda_1 < \cdots< \lambda_L:K_p \to \RR$, and $$
     \Vert S\Vert \leq \tfrac{1}{12000}\cdot \min_{K_p}\{\left|\lambda_j-\lambda_k\right|, j \neq k\}.
     $$
    \item[{\rm (2)}] Each $h\in \mathcal{Y}_{h_0}$ can be approximated by $h_i \in \Gen$ (on all of $M$) so that there is some $j\le L$, $b\le m_j$, $j<k\le L$, $c\le m_k$ so that $(g_0)^{1_1,j_b,k_c}\neq 0$ at all points in $K_p\cap Str_{\mathfrak{m}(p)}(h)$. 
\end{itemize}
\end{prop}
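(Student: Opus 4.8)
The plan is to follow the proof of Proposition~\ref{prop:genericperturbation>6} almost word for word, modifying only the transversality count. In dimension five the table of Lemma~\ref{tabledim5} shows that for several multiplicities $\mathfrak{m}$ the number $N$ of admissible triples $(1_1,j_b,k_c)$ — those with $1\le j<k\le L$, $b\le m_j$, $c\le m_k$ — need not exceed $n=5$, so the jet--transversality argument of Proposition~\ref{IFT} cannot be run to force \eqref{eqn:posstr} on all of $K_p$. The remark following Lemma~\ref{tabledim5} records, however, that in every case $N$ exceeds the dimension of the corresponding stratum by at least one, i.e.\ $N\ge\big(5-\sum_{i=1}^{L}d(m_i)\big)+1$. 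Since assertion~(2) asks only that some $(g_0)^{1_1,j_b,k_c}$ be nonzero \emph{along} $\Str_{\mathfrak{m}(p)}(h)$, it suffices to carry out the Thom--transversality argument of Proposition~\ref{IFT} with the base $B_2(0)$ replaced by (a chart of) that stratum, and the inequality $N>n$ replaced by $N>\dim\Str_{\mathfrak{m}(p)}(h)$, which now always holds.

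\textbf{Part~(1) and the stratum.} Assertion~(1) is exactly the content of Lemma~\ref{nbhdeigencluster}, valid for $n\ge5$: it produces the compact $K_p=x^{-1}(\overline{B_1(0)})$ with smooth boundary, an open $\mathcal{Y}_{h_0}\subset\Gen$ about $h_0$, and for each $h\in\mathcal{Y}_{h_0}$ a smoothly $h$--dependent block--diagonalising $g_0$--orthonormal frame $e_h$ on $K_p$ with strictly increasing eigenvalue functions $\lambda_1<\dots<\lambda_L$ and $\Vert S\Vert\le\tfrac{1}{12000}\min_{K_p}\{|\lambda_j-\lambda_k|:j\ne k\}$. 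Fix $h\in\mathcal{Y}_{h_0}$. Since $h$ is generic, Theorem~\ref{thm:strat} and Lemma~\ref{lem:submanifold} give that $\Sigma:=\Str_{\mathfrak{m}(p)}(h)\cap K_p$ is a compact submanifold of $M$ of dimension $d:=5-\sum_{i=1}^{L}d(m_i)$ (after a generic choice of $\partial K_p$ it meets the boundary cleanly; if $\Sigma=\varnothing$ there is nothing to prove). Let $J$ be the set of admissible triples $(1_1,j_b,k_c)$ as above; by Lemma~\ref{tabledim5} and the remark after it, $N:=|J|\ge d+1$.

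\textbf{Restricted jet transversality and globalisation.} Repeat the construction of Proposition~\ref{IFT} with base $\Sigma$ in place of $B_2(0)$: parametrising $g_0$--orthonormal frames near $e_h$ by small maps $V\colon x^{-1}(B_2(0))\to\so(n)$, put $e^V_l=\sqrt{G^{-1}}\exp(V)\sqrt{G}\,e^0_l$ and, for each $(1_1,j_b,k_c)\in J$, $F^V_{j_bk_c}=g_0([e^V_{1_1},e^V_{j_b}],e^V_{k_c})$, obtaining a smooth $\hat F\colon\Sigma\times\hat V\times\hat W\to\RR^{N}$ with $F^V(x)=\hat F(x,V(x),(DV)_x)$ and $\hat W=\Hom(\RR^n,\so(n))$. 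As in Proposition~\ref{IFT} the differential of $\hat F$ in the $\hat W$--slot has full rank, because the entries $g_0(W(e^0_{1_1})e^0_{j_b},e^0_{k_c})$ of $W(e^0_{1_1})\in\so(n)$ are linearly independent over the distinct pairs $(j_b,k_c)$ occurring in $J$; hence $\hat N:=\hat F^{-1}(0)$ has codimension $N$ in its domain. Since $N\ge d+1>d=\dim\Sigma$, the Thom Transversality Theorem (\cite{GG}, Theorem~4.9 and Corollary~4.10) yields $V$ arbitrarily close to the rotation $V_h$ defining $e_h$ for which the $1$--jet of $V$ along $\Sigma$ misses $\hat N$, i.e.\ at every point of $\Sigma$ at least one $F^V_{j_bk_c}$ is nonzero. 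One then globalises exactly as in Proposition~\ref{prop:genericperturbation>6}: fix a cutoff $\psi$ with $\psi\equiv1$ on $K_p$ and $\psi=0$ outside $x^{-1}(B_{1.5}(0))$, take a sequence of such $V$ tending to $V_h$, and let $H_i$ equal $H$ off $x^{-1}(B_2(0))$ and inside have the same eigenvalue functions $\lambda_j$ and off--diagonal blocks $S_j$ as $H$ but in the eigenframe corresponding to $\psi V+(1-\psi)V_h$. Then $h_i\to h$ in $C^\infty$; $h_i\in\Gen$ for large $i$ because $\Gen$ is open (Theorem~\ref{thm:generic}); and since $H_i$ has the same eigenvalue functions as $H$ everywhere, $\Str_{\mathfrak{m}(p)}(h_i)=\Str_{\mathfrak{m}(p)}(h)$, so on $K_p\cap\Str_{\mathfrak{m}(p)}(h)=\Sigma$ the frame of $H_i$ is $e^V$ and some $(g_0)^{1_1,j_b,k_c}$ is nonzero there. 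This is assertion~(2).

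\textbf{Main obstacle.} The one genuinely new point, compared with Proposition~\ref{prop:genericperturbation>6}, is recognising that positivity of the \eqref{eqn:posstr}--type structure constants is needed only along the lowest stratum $\Str_{\mathfrak{m}(p)}(h)$ rather than on all of $K_p$, which lowers the transversality codimension requirement from $n+1$ to $\dim\Str_{\mathfrak{m}(p)}(h)+1$, and then checking case by case from the table of Lemma~\ref{tabledim5} that the admissible triples always outnumber the stratum dimension. The remaining technical points — that restricting the base of $\hat F$ to $\Sigma$ does not destroy the full--rank property in $\hat W$ (that property is pointwise in the fibre variables), and that the perturbations can be confined to $x^{-1}(B_2(0))$ while keeping $h_i$ generic (cutoff together with openness of $\Gen$) — are identical to the six--dimensional case.
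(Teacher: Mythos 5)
Your proposal is correct and takes essentially the same route as the paper: part~(1) is exactly Lemma~\ref{nbhdeigencluster}, and part~(2) is the same Thom--transversality argument of Proposition~\ref{IFT} run so that the vanishing locus of all $N$ functions $(g_0)^{1_1,j_b,k_c}$ misses only the stratum $\Str_{\mathfrak{m}(p)}(h)$ rather than all of $K_p$, using the case-by-case count of Lemma~\ref{tabledim5}.

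One small remark on precision: the paper phrases the dimension count as ``number of allowable triplets plus the codimension of $\Str_{\mathfrak{m}(p)}(h_0)$ is strictly larger than five,'' i.e.\ $N+\codim\Sigma>n$, which is what you actually need if you apply Thom transversality to $j^1V$ on the full base $x^{-1}(B_2(0))$ with $\hat N$ regarded as a codimension-$\bigl(N+\codim\Sigma\bigr)$ submanifold of $J^1\bigl(B_2(0),\so(n)\bigr)$ sitting over $\Sigma$. Your restatement $N>\dim\Sigma$ is of course equivalent, but phrasing the base as ``$\Sigma$ in place of $B_2(0)$'' is a mild abuse: the jet $(DV)_x$ that enters $\hat F$ contains derivatives in all $n$ directions (hence $\hat W=\Hom(\RR^n,\so(n))$), not just directions tangent to $\Sigma$, so the cleanest statement is the one with full base and the enlarged codimension. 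The rest — full rank of $D\hat F$ in the $\hat W$-slot, the cutoff globalisation, openness of $\Gen$, and the fact that keeping the $\lambda_i$ and $S_i$ fixed while rotating the frame preserves $\Str_{\mathfrak{m}(p)}(h)$ — matches the paper's argument.
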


\begin{proof}
The first point is Lemma 
\ref{nbhdeigencluster}. The second point follows similarly to Proposition \ref{prop:genericperturbation>6}.
Notice, that we have less triplets $(1_1,j_b,k_c)$ with $1\le j<k$, $b\le m_j$ and $c\le m_k$: see  Lemma \ref{tabledim5}. However, the number of allowable triplets plus the co-dimension of $Str_{\mathfrak{m}(p)}(h_0)$ is 
strictly larger than five, thus we are in position
to repeat the proof of Proposition \ref{IFT}.
We thus find that by perturbing the frame, taking care not to perturb the $\lambda_i$'s and $S_i$'s (so that $Str_{\mathfrak{m}(p)}(h_0)=Str_{\mathfrak{m}(p)}(h)$), it is possible to ensure that at least one relevant function is non-zero on $K_p\cap Str_{\mathfrak{m}(p)}(h)$. 
\end{proof}

We would like to mention that the strata of lowest
dimension are compact. However, strata of higher dimension,
containing a stratum of lower dimension in its closure,
will not be compact anymore.

\begin{proof}[Proof of Theorem \ref{thm0} in dimension $5$]
As in the proof for dimension $n\ge 6$, it will suffice to show that $\mathcal{Y}_{g_0}$ is non-empty, and dense in $\Gen$.

Let $h_0 \in \Gen$ be generic.
Recall that by Lemma \ref{tabledim5},
for the codimenion $5,4,2,0$ strata of $h_0$ we have at least
$3,4,5,6$ functions, respectively. Similar to the proof for
dimension $n\geq 6$ we consider a compact covering $K_5$
of the (compact) codimension $5$ strata $S_5 \subset M$ with
$S_5 \subset {\rm int}(K_5)$
on which we can change $h_0$ (with Propsition \ref{genericperturbation>5}, so we only change the eigenbasis) 
into $h_5$ such that on all of $K_5$ we have
always at least one non-vanishing function being strictly bigger that $\delta_5>0$. Next, we consider $M_4:=\overline{M\backslash K_5}$. Notice that the intersection $S_4$ of the
codimension 4 strata of
$h_0$ (or $h_5$) with $M_4$ is compact. We cover again $S_4$ by compact $K_4 \subset M$ with $S_4 \subset  {\rm int}(K_4)$
and change $h_5$ into $h_4$ (using Proposition \ref{genericperturbation>5}) such that on all of $K_4$
we have at least one non-vanishing function being strictly bigger that $\delta_4>0$. By choosing
$h_4$ as close to $h_5$ (on all of $M$) 
as we like we guarantee that
on $K_5$ the above non-vanishing function
is still strictly bigger than $\tfrac{\delta_5}{2}$.
We proceed precisely in the same manner for the codimension
2 strata and the generic codimension 0 stratum. This
shows that $\mathcal{Y}_{g_0}$ is non-empty.
Again, as in the proof of Theorem \ref{thm0} above,
it follows easily that $\mathcal{Y}_{g_0}$ is dense.
\end{proof}

\section{The normal form}
\label{sec:gener-deform-norm}

%%%%%%%%%%%%%%%%%%%%%%%%%%%%%%%%%%%%%%%%%%%%%%%%%%%%%%%%%
%%%%%%%%%%%%%%%%%%%%%%%%%%%%%%%%%%%%%%%%%%%%%%%%%%%%%%%%%
In this section, we prove Theorem \ref{thm:normal-form} below, which gives a convenient normal form for the tensor $H$. This theorem is, in a sense, the most refined version of Lemma \ref{lem:goodframe} we can possibly produce in case the tensor $H$ is generic. Although we do not pursue applications of this result, we expect this special form will prove useful in further understanding scalar curvature asymptotics.    
\begin{thm}\label{thm:normal-form}
Define $d:\mathbb{N}\to \mathbb{N}\cup\{0\}$ with $d(m_i)=\tfrac{1}{2}m_i(m_i+1)-1$. 
Let $h \in T_{g_0}\Metmu$ be generic and let $p \in \Str_{\m}(h)$ be a point on the multiplicity locus of $h$
for $\m=(m_1,...,m_L)$. Suppose that $m_1,...,m_r\geq 2$ and that
$m_{r+1}=\cdots = m_L=1$ for some $r\in \{1,...,L\}$.
Let 
$$
 d_\m:=\dim \Str_{\m}(h)=n-\codim(F(\mathfrak{m}))=n -\sum_{i=1}^L
 d(m_i).
 $$
Then there exists a ball $B_p$ around $p$, a smooth $g_0$-orthonormal 
frame field $e$  and coordinates $x=(a,b_1,...,b_r)$  on $B_p$, 
$a \in \RR^{d_\m}$, 
$b_i=(b_i^1,...,b_i^{d(m_i)})\in \RR^{d(m_i)}$ for $i=1,...,r$, 
such that
     \begin{eqnarray*}
    H(x) &=&
    \begin{pmatrix}
      {\bar \lambda}_1(x) I_{m_1}+S_1(b_1) & & &&&\\
      & \ddots & && &\\
       & & {\bar \lambda}_r(x) I_{m_r}+S_r(b_r) &&&\\
      &&&\bar \lambda_{r+1}(x)&& \\
       &&&& \ddots & \\
       &&&&&  \bar \lambda_{L}(x)
    \end{pmatrix} \,.
  \end{eqnarray*}
%Here $a=(a_1,...,a_{d_\m})$ are coordinates of the submanifold %$\Str_{\m}(h)\subset M^n $ and $(b_1,...,b_r)$ are coordinates of the 
%normal space of $\Str_{\m}(h)$. 
Here $\bar \lambda_j:B_p \to \RR$ are
smooth functions, $j=1,...,L$, and 
  \begin{equation*}
    S_i(b_i) :=
    \begin{pmatrix}
      b_i^1 & b_i^2 & \cdots & b_i^{m_i - 1} & b_i^{m_i} \\
      b_i^2 & b_i^{m_i + 1} & \cdots & b_i^{2m_i - 2} & b_i^{2m_i - 1} \\
      \vdots & \vdots & \ddots & \vdots & \vdots \\
      b_i^{m_i - 1} & b_i^{2m_i - 2} & \cdots & b_i^{d(m_i) - 1} &   b_i^{d(m_i)}\\
      b_i^{m_i} & b_i^{2m_i - 1} & \cdots & b_i^{d(m_i)} & \bar z(b_i)
    \end{pmatrix}\,,
  \end{equation*}
 with $\bar z(b_i)= -(b_i^1 + b_i^{m_i + 1} + \cdots + b_i^{d(m_i) - 1})$
for $i=1,...,r$. 
\end{thm}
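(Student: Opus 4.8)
The plan is to start from the block-diagonal form already supplied by Lemma~\ref{lem:goodframe} near $p$ and then to \emph{read off} the coordinates $(a,b_1,\dots,b_r)$ directly from the entries of the blocks $S_i$; genericity is precisely what forces those entries to be functionally independent at $p$, so that the implicit function theorem applies and produces the desired chart. Firstly, since $p\in\Str_{\m}(h)$ the eigenvalues of $H$ cluster according to $\m$ on a small enough ball around $p$, so I would apply Lemma~\ref{lem:goodframe} there, and then permute the frame vectors so that the blocks with $m_i\ge 2$ come first, to obtain a ball $B_p$, a smooth $g_0$-orthonormal frame $e$, smooth eigenvalue functions $\lambda_1,\dots,\lambda_L\colon B_p\to\RR$ (no longer ordered) and smooth traceless blocks $S_i\colon B_p\to\Sym_0(m_i)$ with $H=\diag(\lambda_1 I_{m_1}+S_1,\dots,\lambda_L I_{m_L}+S_L)$ on $B_p$. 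Because $H(p)$ has multiplicity exactly $\m$ we have $S_i(p)=0$ for $i\le r$, while $S_i\equiv 0$ automatically for $i>r$ since then $m_i=1$; in particular $H(p)=D_\lambda$ lies on the stratum $F(\m)$.

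Next, by Lemma~\ref{lem:normalorbit} and Lemma~\ref{lem:submanifold} the normal space $\nu_{D_\lambda}F(\m)$ is precisely the space of block-diagonal traceless symmetric matrices, and the orthogonal projection of $H(q)-D_\lambda$ onto it is $\diag(S_1(q),\dots,S_r(q),0,\dots,0)$. Since $H\colon B_p\to\Sym_0(n)$ is generic it is transverse to the stratum $F(\m)$ of the Whitney stratified set $\Sing(TM)$ (Definition~\ref{def:globalgen}, Lemma~\ref{lem:SingM}, Theorem~\ref{thm:whitney}); composing the transversality identity $\im(dH_p)+T_{D_\lambda}F(\m)=\Sym_0(n)$ with the orthogonal projection onto $\nu_{D_\lambda}F(\m)$ then shows that the smooth map
\[
 \Sigma:=(S_1,\dots,S_r)\colon B_p\longrightarrow\bigoplus_{i=1}^{r}\Sym_0(m_i)\cong\RR^{\,\sum_{i=1}^{r}d(m_i)}
\]
is a submersion at $p$; here $\sum_{i=1}^{r}d(m_i)=\sum_{i=1}^{L}d(m_i)=\codim F(\m)=n-d_{\m}$.

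I would then fix, for each $i\le r$, the linear isomorphism $\Sym_0(m_i)\cong\RR^{d(m_i)}$ that lists the entries of a traceless symmetric $m_i\times m_i$ matrix in the order displayed in the statement (so that the matrix with coordinate vector $b_i$ equals $S_i(b_i)$), and compose $\Sigma$ with it to obtain smooth functions $b_i^1,\dots,b_i^{d(m_i)}$ on $B_p$, vanishing at $p$, whose differentials at $p$ are linearly independent. The implicit function theorem then lets me shrink $B_p$ and adjoin $d_{\m}$ further smooth functions $a=(a^1,\dots,a^{d_{\m}})$ vanishing at $p$, so that $x=(a,b_1,\dots,b_r)$ is a chart on $B_p$. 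Keeping the frame $e$ from the first step, the block $S_i$ of $H$ now depends only on $b_i$ and equals $S_i(b_i)$ by the very choice of the $b_i^j$, while $\bar\lambda_j:=\lambda_j$ is a smooth function of $x$; this is exactly the claimed normal form. (As a byproduct one reads off $\Str_{\m}(h)\cap B_p=\{b_1=\dots=b_r=0\}$, a submanifold of dimension $d_{\m}$, consistent with the stated formula.)

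The main obstacle is the second step. One must check carefully that transversality of $H$ to the Whitney stratified space $\Sing(TM)$ genuinely entails transversality to the single stratum $F(\m)$ through $H(p)$ — in the local trivialization $\Phi_x$ of Lemma~\ref{lem:SingM} this stratum reads as $V\times F(\m)$, so this comes down to the matrix picture — and one must correctly match the normal projection of $H$ with $\Sigma=(S_1,\dots,S_r)$ via the orthogonal decomposition of Lemma~\ref{lem:normalorbit}. Once that identification is in place, everything else is a routine application of the rank theorem together with the fact that $S_i\equiv 0$ whenever $m_i=1$.
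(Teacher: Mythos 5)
Your proposal is correct and follows essentially the same route as the paper's proof: block-diagonalize $H$ via Lemma \ref{lem:goodframe}, identify $(S_1,\dots,S_r)$ with the orthogonal projection onto $\nu_{D_\lambda}F(\m)$ from Lemma \ref{lem:normalorbit}, use transversality of the generic $H$ to the stratum $F(\m)$ to get a submersion, and invoke the implicit function theorem to turn the entries of the $S_i$ into coordinates. The only (immaterial) difference is order of operations — the paper fixes the $a$-coordinates first via a tubular neighbourhood of $\Str_{\m}(h)$ and shows $\Phi(a,b)=(a,S(a,b))$ is a local diffeomorphism, whereas you take the $b$-coordinates first and complete to a chart.
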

\begin{proof}
Let $h \in T_{g_0}\Metmu$ be a generic direction and $p \in \Str_{\m}(h)$
be a point on the multiplicity locus of $h$. 
Locally, the normal bundle of the $d_\m$-dimensional submanifold 
$\Str_{\m}(h) \subset M^n$ is trivial, that is there exists an open neighborhood $B=B_p$ of $p$, which is diffeomorphic
to $D^m \times D^{n-m}$, where the first factor $D^m$ corresponds to $\Str_{\m}(h)$,
that is $m=d_\m$. Suppose that the origin $(0,0)$ corresponds to the point $p$.

Let $e$ be now a $g_0$-orthonormal frame on $B_p$ as in Lemma \ref{lem:goodframe}.
Then, the image of $H=H_B^e$ has the nice form described in Lemma \ref{lem:goodframe}.
We set $\bar \lambda_j(x):=\frac{1}{m_i}\cdot \tr H_j(x)$ for
$j=1,...,L$ and
obtain  $H_i =\bar \lambda_i \cdot I_{m_i} +S_i$, where $\tr S_i=0$, for $i=1,...,r$.
Let now the diagonal $(n\times n)$-matrix $D_{\bar \lambda(x)}$ be given by  
the diagonal entries $(\bar \lambda_1(x),...,\bar\lambda_L(x))$
with multiplicity $\m=(m_1,...,m_L)$ and let
 $S(x) \in \nu_{D_{\bar \lambda(x)}}F(\m)$ be given 
by $S_1(x),...,S_r(x)$ (see Lemma \ref{lem:submanifold}).
Let  $(a,b)$ denote the standard coordinates on    $D^m \times D^{n-m}$ and set
 $ \Phi(a,b):=(a, S(a,b))$. We have
 \begin{eqnarray*}
  (D\Phi)_{(a,b)} &=& \left( \begin{array}{cc} I_m & * \\    0 & \tfrac{\partial S(a,b)}{\partial b}\end{array}
    \right)\,.
\end{eqnarray*}
Since the image of $H$ intersects the  face $F(\m)$ transversally, we conclude
that $(D\Phi)_{(a,0)}$ is an isomorphism for all $a \in D^m$. As a consequence, on 
$D^m \times D^{n-m}$ there exist coordinates $(\tilde a,\tilde b)$
such that $\Phi(\tilde a,\tilde b) =(\tilde a,\tilde b)$, thus $S(\tilde a,\tilde b)=\tilde b$.
This implies the claim.
\end{proof}

%%%%%%%%%%%%%%%%%%%%%%%%%%%%%%%%%%%%%%%%%%%%%%%%%%%%%%%%%
%%%%%%%%%%%%%%%%%%%%%%%%%%%%%%%%%%%%%%%%%%%%%%%%%%%%%%%%%

%%%%%%%%%%%%%%%%%%%%%%%%%%%%%%%%%%%%%%%%%%%

%%%%%%%%%%%%%%%%%%%%%%%%%%%%%%%%%%%%%%%%%%%%%%%%%%%%%%%%%
%%%%%%%%%%%%%%%%%%%%%%%%%%%%%%%%%%%%%%%%%%%%%%%%%%%%%%%%%


\begin{thebibliography}{99}

%%%%%%%%%%%%%%%%%%%%%%%%%%%%%%%%%%%%%%%%%%%%%%%%%%%%%%%%%
%%%%%%%%%%%%%%%%%%%%%%%%%%%%%%%%%%%%%%%%%%%%%%%%%%%%%%%%%

\bibitem[Ad]{Ad}
J.~F.~Adams:
     \emph{Vector fields on spheres},
   Ann. of Math. (2) {\bf 75}, 603--632, 
      (1962).
   

     \bibitem[AKLM]{AKLM}
    D.~Alekseevsky, A.~Kriegl, M.~Losik, P.~Michor:
     \emph{Choosing roots of polynomials smoothly},
     Israel J. Math. {\bf 105}, 203--233, (1998), 


\bibitem[Aub]{Aub}
T.~Aubin:
\emph{Métriques riemanniennes et courbure},
J. Differential Geometry {\bf 4} , 383–424, (1970).

\bibitem[Bes]{Bes} A.~L.~Besse:
  \emph{Einstein Manifolds},
    Springer-Verlag,
    (1987).  





\bibitem[BWZ]{BWZ} C.~B\"ohm, M.~Y.~Wang, W.~Ziller:
  \emph{A variational approach for compact homogeneous Einstein manifolds},
    Geom. Funct. Anal. (4) {\bf 14}, 681–733,  (2004).

\bibitem[BK]{BK}
    G.~E.~Bredon, A.~Kosi\'{n}ski: 
     \emph{Vector fields on {$\pi $}-manifolds},
   Ann. of Math. (2) {\bf 84}, 85--90, (1966).
    

\bibitem[Cl]{Cl}
    B.~Clarke:
     \emph{Geodesics, distance, and the {$\rm CAT(0)$} property for the
              manifold of {R}iemannian metrics},
   Math. Z. {\bf 273},
    55--93, (2013).

\bibitem[Eb]{Eb} D.~Ebin:
    The manifold of Riemannian metrics,
    Proc. Sympos. Pure Math. {\bf 15}, Global Analysis, 11--40,
    (1968).
    

\bibitem[GG]{GG} M.~Golubitsky, V.~Guillemin:
    \emph{Stable mappings and their singularities},
     Graduate Texts in Mathematics, Springer.

\bibitem[Gr]{Gr} M.~Gromov, Mikhael:
     \emph{Partial differential relations},
   Ergebnisse der Mathematik und ihrer Grenzgebiete {\bf 9},
      Springer.



\bibitem[Ha]{Ha} R.~Hamilton:
    \emph{The inverse function theorem of Nash and Moser},
      Bull. Amer. Math. Soc. (N.S.)  {\bf 7}, 65–222,   (1982).





\bibitem[HH]{HH}
    F.~Hirzebruch and H.~Hopf:  \emph{Felder von Flächenelementen in 4-dimensionalen Mannigfaltigkeiten}, Math. Ann. {\bf 136}, 156-172, (1958).



\bibitem[Ho]{Ho}
    H.~Hopf:
     \emph{Vektorfelder in {$n$}-dimensionalen {M}annigfaltigkeiten},
   Math. Ann. {\bf 96},
   225--249, (1927).





\bibitem[Kob]{K.O} O.~Kobayashi:
  \emph{Scalar curvature of a metric with unit volume},
    Math. Ann. {\bf 279}, 253--265,
    (1987). 

\bibitem[Lo1]{Lo1}
J.~Lohkamp:
\emph{Metrics of negative Ricci curvature},
Ann. of Math. (2) {\bf 140}, no. 3, 655–683,  (1994).

\bibitem[Lo2]{Lo2}
J.~Lohkamp:
\emph{Curvature h-principles},
Ann. of Math. (2) {\bf 142}, no. 3, 457–498,  (1995).


\bibitem[MS]{MS}
    W.~S.~Massey, R.~H.~Szczarba:
     \emph{Line element fields on manifolds},
     Trans. Amer. Math. Soc.
     {\bf 104}, 450--456,
        (1962).


\bibitem[MW]{MW}J.~Milnor and D.~Weaver: \emph{Topology from the differential}, Princeton University Press, (1997).

\bibitem[Msr]{Msr} J.~Moser:
    On the volume element on a manifold,
    Trans. Amer. Math. Soc. {\bf 120}, 286--294,  (1965).    

\bibitem[Rai]{Rai}A.~Rainer:
     \emph{Quasianalytic multiparameter perturbation of polynomials and normal matrices},
    Trans. Amer. Math. Soc. {\bf 363}, 4945--4977, (2011).


\bibitem[Sch1]{Sc1}
     R.~Schoen:
     \emph{Conformal deformation of a Riemannian metric to
     constant scalar curvature}, 
     J. Diff. Geom. {\bf 20}, 479--495,
     (1984).


\bibitem[Sch2]{S.R} R.~Schoen:
  \emph{Variational theory for the total scalar curvature functional for
    Riemannian metrics and related topics},
    Lecture Notes in Math. {\bf 1365}, 120--154,
    (1987) . 

\bibitem[Sti]{Sti} E.~Stiefel: \emph{ Richtungsfelder und Fernparallelismus in n-dimensionalen Mannigfaltigkeiten}, Diss. ETH Zurich, (1935).

\bibitem[Su1]{Su1} H.~Sussmann:
     \emph{Orbits of families of vector fields and integrability of
              distributions},
   Trans. Amer. Math. Soc. {\bf 180}, 171--188, (1973).

\bibitem[Su2]{Su2} H.~Sussmann:
      \emph{Some properties of vector field systems that are not altered
              by small perturbations},
   J. Differential Equations {\bf 20}, 292--315, (1976).

\bibitem[Thom]{Thom}E.~Thomas:
\emph{Vector fields on low dimensional manifolds}, Math. Z. {\bf 103}, no. 2, 85-93, (1968).

\bibitem[Tro1]{Tro1}  D.~J.~A.~Trotman,
     \emph{Stability of transversality to a stratification implies {W}hitney {$(a)$}-regularity},
   Invent. Math. {\bf 50}, 273--277, (1978).


\bibitem[Tro2]{Tro2}  D.~J.~A.~Trotman:
     \emph{Lectures on real stratification theory},
   World Sci. Publ., Hackensack, NJ, 139--155,  (2007).

    

\bibitem[Tr]{Tr}
     N.~Trudinger:
     \emph{Remarks concerning the conformal deformation
     of Riemannian structures on compact manifolds}, 
     Ann. Scuola Norm. Sup. Pisa {\bf 22}, 265--274,
     (1968).





\end{thebibliography}
\end{document}